\newcommand{\T}{{T_t}}
\newcommand{\rn}{{\mathbb R^n}}
\newcommand{\rN}{{\mathbb R^N}}
\newcommand{\rNn}{{\mathbb R^{N\times n}}}
\newcommand{\bu}{{\bfu}}
\newcommand{\ep}{\varepsilon}
\newcommand{\hh}{\mathcal H^{n-1}}
\long\def\unmarkedfootnote#1{{\long\def\@makefntext##1{##1}\footnotetext{#1}}}
\numberwithin{equation}{section}
\numberwithin{theorem}{section}
\title[A pointwise differential inequality    and    nonlinear elliptic systems]
{A pointwise differential inequality    and  second-order regularity for nonlinear elliptic systems} 
\numberwithin{equation}{section}
\author[Kh.Balci]{Anna Kh.Balci}
\address{Fakult\"at für Mathematik, University Bielefeld,
Universit\"atsstrasse 25, 33615 Bielefeld, Germany}
\email{akhripun@math.uni-bielefeld.de }
\author[Cianchi]{ Andrea Cianchi}
\address{Dipartimento di Matematica e Informatica \lq\lq U. Dini", Universit\`a di Firenze,
Viale Morgagni 67/A, 50134 Firenze, Italy}
\email{andrea.cianchi@unifi.it}
\author[Diening]{ Lars Diening}
\address{Fakult\"at für Mathematik, University Bielefeld,
Universit\"atsstrasse 25, 33615 Bielefeld, Germany}
\email{lars.diening@uni-bielefeld.de}
\author[Maz'ya]{Vladimir Maz'ya}
\address{Department of Mathematics, Link\"oping University,E-581 83 Link\"oping, Sweden
and  Peoples Friendship University of Russia (RUDN University),
6 Miklukho-Maklay St, Moscow, 117198, Russian Federation}
\email{vladimir.mazya@liu.se}
\date{}
\begin{document}

\begin{abstract}
  A sharp pointwise differential inequality for vectorial   second-order partial differential operators, with Uhlenbeck structure, is offered. As a consequence, optimal second-order regularity properties of solutions to nonlinear elliptic systems in domains in $\rn$ are derived. Both local and global estimates are established. Minimal   assumptions on the boundary of the domain are required for the latter. In the  special case of the $p$-Laplace system, our conclusions broaden the range of the  admissible values of the exponent $p$ previously known.
\end{abstract}

\maketitle

\unmarkedfootnote {
\par\noindent {\it Mathematics Subject
Classifications:} 35J25, 35J60, 35B65.
\par\noindent {\it Keywords:} Quasilinear elliptic systems, second-order derivatives, $p$-Laplacian,
Dirichlet problems,   local solutions, capacity, convex domains, Lorentz spaces.}


\section{Introduction}\label{intro}

A classical identity, which  links the Laplacian $\Delta \bfu$ of a   vector-valued function
$\bfu \in C^3(\Omega, \rN)$  to its Hessian $\nabla^2 \bfu$, tells us that
\begin{equation}\label{linearvector}
  |\Delta \bfu|^2  =  {\rm {div}} \Big(
  (\Delta \bfu)^T \nabla \bfu
  - \tfrac 12   \nabla |\nabla \bfu|^2\Big)
  +   |\nabla ^2 \bfu|^2 \quad \hbox{in $\Omega$,}
\end{equation}
where $\Omega$ is an open set in $\rn$. 
Here, and in what follows, $n\geq 2$, $N \geq 1$, and the gradient $\nabla \bfu$ of a function $\bfu : \Omega \to \setR^N$ is regarded as the matrix in   $\setR^{N\times n}$ whose rows are the gradients in $\setR^{1 \times n}$ of the components $u^1, \, \dots \, , u^N$ of $\bfu$. Moreover, the suffix \lq\lq $T$" stands for transpose. 
%
\par Identity  \eqref{linearvector} can be found as early as more than one century ago in \cite{Bernstein}  for $n=2$ -- see also \cite{Sobolev, Grisvard}.  It has applications, for instance, in the second-order $L^2$-regularity theory for solutions to the Poisson system for the  Laplace operator
\begin{equation}\label{poisson}
  - \Delta \bfu = \bff \qquad \text{in $\Omega$.}  
\end{equation}
Indeed,  identity \eqref{linearvector} enables one to bound the integral of $|\nabla ^2 \bfu|^2$ over some set in $\Omega$ by the integral of $|\Delta \bfu|^2$ over the same set, plus a boundary integral involving the expression under the divergence operator. 
Of course, since the equations in the linear system \eqref{poisson} are uncoupled, its   theory is reduced to that of its single equations.
\par The second-order regularity theory of nonlinear equations and systems is much less developed, yet for the basic $p$-Laplace equation or system
\begin{equation}\label{plapl}
  - {\bf div} (|\nabla \bfu|^{p-2}\nabla \bfu ) = \bff \qquad \text{in $\Omega$,}
\end{equation}
where $p>1$ and $\lq\lq {\bf div} "$ denotes the $\setR^N$-valued divergence operator. Standard results concern weak differentiability properties  of the expression $|\nabla \bfu|^{\frac{p-2}2}\nabla \bfu$. They   trace back to \cite{Uhl} for $p>2$, and to \cite{AcFu, CDiB} for every $p >1$. The case of a single equation was earlier considered in \cite{Ural}. Further developments are in   \cite{BC, Cel, CGM}. 

As demonstrated in several more recent contributions,
the regularity of solutions to $p$-Laplacian type equations and
systems is often most neatly described in terms of the expression
$|\nabla \bfu|^{{p-2}}\nabla \bfu$ appearing under the divergence
operator in \eqref{plapl}. This surfaces, for instance, from BMO and H\"older bounds of \cite{DKS}, potential estimates of \cite{KM1},  rearrangement inequalities of \cite{cm2},  pointwise oscillation estimates of \cite{BCDKS18}, regularity results up to the boundary of \cite{BCDS19global}. Further results in this connection can be found e.g. \cite{AKM, 
  cmARMA, KuuMin18}.


Differentiability properties of $|\nabla \bfu|^{{p-2}}\nabla \bfu$ have customarily been
detected under strong regularity assumptions on the right-hand side
$\bff$. 
This is the case of \cite{Lou}, where local solutions are
considered. High regularity of the right-hand side is also assumed
\cite{Dasc}, where results for boundary value problems can be found
under smoothness assumptions on $\partial \Omega$.  Both papers
\cite{Lou} and \cite{Dasc} deal with scalar problems, i.e. with the
case when $N=1$.  Fractional-order regularity of the gradient of
solutions to quasilinear equations of $p$-Laplacian type has been
studied in \cite{SimonJ}, and in the more recent contributions
\cite{AKM, BSY, Cel1, Mi1, Mis}. The question of fractional-order
regularity of the quantity $|\nabla \bfu|^{{p-2}}\nabla \bfu$, when
$N=1$ and the right-hand side of equation \eqref{plapl} is in
divergence form, is addressed in \cite{BaDieWei}, where, in
particular, sharp results are obtained for $n=2$.

\par
Optimal second-order $L^2$-estimates for solutions to a class of problems, including \eqref{plapl} for every $p >1$,  in the scalar case, have recently been established in   \cite{CiMa_ARMA}. 
Loosely speaking, these estimates tell us that 
$|\nabla \bfu|^{p-2}\nabla \bfu  \in W^{1,2}$ if and only if $\bff \in L^2$. Such a  property is shown to hold both locally, and, under minimal regularity assumptions on the boundary, also globally. Parallel results are derived  in \cite{CiMa_JMPA} for vectorial problems, namely for $N\geq 2$, but for the restricted range of powers $p >\frac 32$. The results of    \cite{CiMa_JMPA} and \cite{CiMa_ARMA} rely upon the idea that, in the nonlinear case, the role of the pointwise identity \eqref{linearvector} can be performed by   a pointwise inequality. The latter amounts to a bound from below for the square of the right-hand side of \eqref{plapl} by  the square of the derivatives of $|\nabla \bfu|^{p-2}\nabla \bfu$, plus an expression in divergence form. The restriction for the admissible values of $p$ in the vectorial case stems from this pointwise inequality. 
\par
In the present paper we offer an enhanced pointwise inequality in the same spirit, with best possible constant, for a class of nonlinear differential operators of the form $- {\bf div}(a(|\nabla \bfu |) \nabla \bfu)$. The relevant inequality holds under general assumptions on the function $a$, which also allow  growths that are not necessarily of power type. Importantly,  our inequality improves the available  results even in the case when the operator is the $p$-Laplacian, namely when $a(t)=t^{p-2}$. In particular, for this special choice, it entails the  existence of a  constant $c>0$ such that
\begin{align}
  \label{eq:aux1}
\big|{\rm {\bf div}} (|\nabla \bfu|^{p-2}\nabla \bfu )\big|^2 \geq 
{\rm {div}} \Big[|\nabla \bfu|^{2(p-2)}
\Big((\Delta \bfu)^T \nabla \bfu 
- \tfrac 12    \nabla |\nabla \bfu|^2\Big)\Big]  
+ c\,  |\nabla \bfu|^{2(p-2)} |\nabla ^2 \bfu|^2 
%
\end{align}
in $\{\nabla \bfu \neq 0\}$ if and only either $N=1$ and $p>1$, or $N \geq 2$ and  $p>2(2-\sqrt{2})\approx 1.1715$. 

The differential inequality to be presented, in  its general version, is   the crucial point of departure in our proof of the  local and global $W^{1,2}$-regularity for the expression $a(|\nabla \bfu |) \nabla \bfu$ for systems of the form
\begin{equation} \label{system-a}
- {\rm {\bf  div}}( a(|\nabla \bu|) \nabla {\bf u} ) = {\bf f}  \quad {\rm in}\,\,\, \Omega.
\end{equation}
Regularity issues for equations and systems driven by non standard  nonlinearities, encompassing \eqref{system-a}, are nowadays the subject of a rich literature.  A non exhaustive sample of contributions along this direction of research includes
\cite{ACCZ,  BalDieGioPas20,Baroni, BeMi, BMSV, CKP, Chl, Ci_CPDE, Ci_AIHP, DiMa, DKS, DSV, GSZ, HHT,  Lieber, Mar, Ta}.

Let us incidentally note that system \eqref{system-a} is the Euler equation  of   the   functional
\begin{equation}\label{functional}
J(\bu) = \int _\Omega B(|\nabla \bu|) - {\bf f}\cdot \bu \,\,
dx.
\end{equation}
Here,  the dot $\lq\lq \, \cdot \, "$ stands for  scalar product, and 
$B:[0, \infty) \to [0, \infty)$ is the function defined as
\begin{equation}\label{B}
B(t) = \int_0^t b(s)\, ds \qquad \text{for $t \geq 0$,}
\end{equation}
where the function $b: [0, \infty) \to [0, \infty)$ is given by 
\begin{equation}\label{b}
b(t)= a(t) t \qquad \hbox{for $t >0$,}
\end{equation}
and $b(0)=0$. 
\\
Under the assumptions to be imposed on $a$, the function $B$ and  the functional $J$  turn out to be strictly  convex.  In particular, if $a(t)=t^{p-2}$, then $B(t)=\frac 1p t^p$, and $J$ agrees with the usual energy functional associated with the $p$-Laplace system \eqref{plapl}.
 
We shall focus on  the case when $N \geq 2$, the case of
equations being  already fully covered by the results of \cite{CiMa_ARMA}. 
In particular, our regularity results apply  to the $p$-Laplacian system \eqref{plapl} for every 
\begin{equation}\label{sharpp}
p>2(2-\sqrt{2}) \approx 1.1715.
\end{equation}
Hence, we extend the range of the admissible exponents $p$ known until now, which was  limited to $p>\frac 32$. 

  In the light of the pointwise inequality~\eqref{eq:aux1}, the lower bound \eqref{sharpp}  for $p$  is 
optimal for our approach   to the second-order regularity of solutions to the $p$-Laplace system \eqref{plapl}. The question of whether such a restriction is really indispensable for this regularity, or it can be dropped  as in the case when $N=1$, where    every $p>1$ is admitted, is   an open challenging problem.

\section{Main results}\label{S:main}

%
The statement of the general differential inequality requires a few notations.
Given a positive function
 $a \in C^1((0, \infty))$,  
we define the  indices  
 \begin{equation}\label{ia}
i_a= \inf _{t >0} \frac{t a'(t)}{a(t)} \qquad \hbox{and} \qquad
s_a= \sup _{t >0} \frac{t a'(t)}{a(t)},
\end{equation}
where $a'$ stands for the derivative of $a$. Plainly, if $a(t)=t^{p-2}$, then $i_a=s_a=p-2$.

Moreover, we denote, for $N\geq 1$ the continuously increasing function 
 $\kappa_N : [1, \infty) \to \mathbb R$ as
\begin{equation}\label{kappa1}
\kappa_1 (p) = 
\begin{cases}
                 (p-1)^2 &\qquad \text{if $p \in [1,2)$}
                 \\
                1
 &\qquad  \text{if $p \in [2, \infty),$}
               \end{cases}
%
%
\end{equation}
if $N=1$, and
\begin{equation}\label{cp}
\kappa_N (p) = \begin{cases}
                 1- \frac{1}8(4-p)^2  &\qquad  \text{if
                   $p \in [1, \frac 43)$}
                 \\
                 (p-1)^2 &\qquad \text{if $p \in [\frac 43,2)$}
                 \\
                1
 &\qquad  \text{if $p \in [2, \infty),$}
               \end{cases}
\end{equation}
 if $N \geq 2$.


\begin{theorem}\label{lemma1}{\rm {\bf [General pointwise inequality]}}
Let $n \geq 2$ and  $N \geq 1$.  Let $\Omega$ be an open set in $\rn$ and let $\bfu \in C^3(\Omega, \rN )$.  Assume that the function  $a \in C^0([0, \infty))$ is   such that: 
\begin{equation}\label{positive} 
a(t)>0 \qquad \text{ for $t>0$,}  
\end{equation}
\begin{equation}\label{fundhp}
i_a\geq -1,
\end{equation}
and 
\begin{equation}\label{bC1}
b \in C^1([0, \infty)),
\end{equation}
where $b$ is the function defined by \eqref{b}.
Then  
\begin{align}\label{pointwise}
\big|{\rm {\bf div}} \big(a(|\nabla \bfu|)\nabla \bfu \big)\big|^2  \geq {\rm {div}} \Big[a(|\nabla \bfu|)^2  \Big(
 (\Delta \bfu)^T \nabla \bfu
- \tfrac 12   \nabla |\nabla \bfu|^2\Big)\Big]
 +  \kappa_N (i_a+2)  a(|\nabla \bfu|)^2 |\nabla ^2 \bfu|^2  
%
\end{align}
in $\Omega$,  where $\kappa_N $ is defined as in \eqref{kappa1}-\eqref{cp}.
Moreover, the constant $ \kappa_N (i_a+2)$ is sharp.
\\ If $a$   is   just defined in $(0,\infty)$, $a \in C^1((0, \infty))$, and  conditions \eqref{positive} and \eqref{fundhp} are fulfilled,
then inequality \eqref{pointwise} continues to hold  in the set $\{\nabla \bfu \neq 0\}$.
%
\end{theorem}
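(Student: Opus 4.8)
\quad The plan is to derive \eqref{pointwise} from two ingredients: a nonlinear analogue of the Bernstein identity \eqref{linearvector} for the operator ${\rm {\bf div}}(a(|\nabla\bfu|)\nabla\bfu)$, and then a sharp pointwise \emph{algebraic} inequality estimating the zeroth--order remainder in that identity from below by a multiple of $|\nabla^2\bfu|^2$. I would work first on the open set $\{\nabla\bfu\neq0\}$, where $a\in C^1((0,\infty))$ already makes every quantity below meaningful. Fix a point with $\nabla\bfu\neq0$ and put $A=a(|\nabla\bfu|)$, $\omega=|\nabla\bfu|^{-1}\nabla\bfu\in\rNn$ (so that $|\omega|=1$ in the Frobenius norm), $H=\nabla^2\bfu$, and $\gamma=|\nabla\bfu|\,a'(|\nabla\bfu|)/a(|\nabla\bfu|)$; by \eqref{ia}--\eqref{fundhp} one has $\gamma\in[i_a,s_a]$ and $\gamma+2\geq1$. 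Setting $v_j=\tfrac12\partial_j|\nabla\bfu|^2=\sum_{i,k}\partial_ku^i\,\partial_j\partial_ku^i$, a componentwise expansion gives ${\rm {\bf div}}(A\nabla\bfu)=A\bigl(\Delta\bfu+\gamma|\nabla\bfu|^{-2}(\nabla\bfu)\,v\bigr)$ with $((\nabla\bfu)\,v)^i=\sum_j\partial_ju^i\,v_j$; squaring this and, using $\bfu\in C^3$ to commute third derivatives, differentiating the vector field under the divergence in \eqref{pointwise}, I expect the computation to collapse to the \emph{identity}
\begin{equation*}
\bigl|{\rm {\bf div}}(A\nabla\bfu)\bigr|^2-{\rm {div}}\Bigl[A^2\Bigl((\Delta\bfu)^T\nabla\bfu-\tfrac12\nabla|\nabla\bfu|^2\Bigr)\Bigr]=A^2\Bigl(|\nabla^2\bfu|^2+\gamma^2|\omega w|^2+2\gamma|w|^2\Bigr),
\end{equation*}
where $w=|\nabla\bfu|^{-1}v\in\rn$ and $(\omega w)^i=\sum_j\omega^i_jw_j$; for $a\equiv1$ this is exactly \eqref{linearvector}. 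Since $\kappa_N$ is nondecreasing and $\gamma\geq i_a$, proving \eqref{pointwise} on $\{\nabla\bfu\neq0\}$ reduces to the algebraic inequality
\begin{equation*}
2\gamma|w|^2+\gamma^2|\omega w|^2\ \geq\ \bigl(\kappa_N(\gamma+2)-1\bigr)\,|\nabla^2\bfu|^2 .
\end{equation*}

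For $\gamma\geq0$ the left side is nonnegative and $\kappa_N(\gamma+2)=1$, so nothing is to prove; the substance is the case $\gamma\in[-1,0)$, i.e.\ $p:=\gamma+2\in[1,2)$. The key observation is that the left side depends on $H=\nabla^2\bfu$ only through the contraction $w=\omega:H$, with $w_j=\sum_{i,k}\omega^i_kH^i_{kj}$, and that $|\omega w|^2=w^TMw$, where $M:=\omega^T\omega$ is an $n\times n$ symmetric positive semidefinite matrix with $\operatorname{tr}M=1$ and $\operatorname{rank}M\leq\min(N,n)$; hence it suffices to bound $|H|^2$ from below over all symmetric tensors $H$ with $\omega:H$ prescribed. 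A Lagrange--multiplier argument --- in which the constraint functionals $H\mapsto(\omega:H)_j$ must first be symmetrized in the two lower indices before being represented --- should show that the representing tensors have Gram matrix $\tfrac12(I+M)$, so that
\begin{equation*}
\min\bigl\{|H|^2:\ H\ \text{symmetric},\ \omega:H=w\bigr\}=2\,w^T(I+M)^{-1}w .
\end{equation*}
Thus the algebraic inequality is equivalent to $\gamma^2w^TMw+2\gamma|w|^2+2\bigl(1-\kappa_N(p)\bigr)w^T(I+M)^{-1}w\geq0$ for every admissible $M$ and every $w\in\rn$, and diagonalising $M$ decouples this into the scalar condition $\gamma^2\mu+2\gamma+2(1-\kappa_N(p))/(1+\mu)\geq0$ for each eigenvalue $\mu$ that can occur. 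For $N\geq2$ (and $n\geq2$) those eigenvalues sweep the whole interval $[0,1]$, so the condition is needed for all $\mu\in[0,1]$, while for $N=1$ only $\mu\in\{0,1\}$ arise. Writing the condition as $2(1-\kappa_N(p))\geq h(\mu):=-\gamma\bigl(\gamma\mu^2+(\gamma+2)\mu+2\bigr)$ and using that $h$ is concave with unconstrained maximiser $\mu^\ast=-(\gamma+2)/(2\gamma)$ obeying $\mu^\ast\leq1$ exactly when $\gamma\leq-\tfrac23$, one finds, for $N\geq2$, that $\max_{[0,1]}h$ is $h(\mu^\ast)=-2\gamma+\tfrac14(\gamma+2)^2$ for $\gamma\in[-1,-\tfrac23)$ and $h(1)=-2\gamma(\gamma+2)$ for $\gamma\in[-\tfrac23,0)$; equating $2(1-\kappa_N(p))$ to this maximum returns precisely the two branches $1-\tfrac18(4-p)^2$ and $(p-1)^2$ of \eqref{cp}. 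For $N=1$ only $h(1)$ is active (here $h(0)\leq h(1)$), which yields $\kappa_1(p)=(p-1)^2$ as in \eqref{kappa1}. Identifying this minimal--norm Hessian correctly and carrying out the case distinction according to whether $\mu^\ast\leq1$ --- which is exactly what produces the two regimes of $\kappa_N$ for $N\geq2$ and its discrepancy with the scalar formula --- is the heart of the matter and the step I expect to be the main obstacle.

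Finally one must pass from $\{\nabla\bfu\neq0\}$ to all of $\Omega$ under the extra hypotheses $a\in C^0([0,\infty))$ and $b\in C^1([0,\infty))$, and check sharpness. For the extension, $b\in C^1$ renders $a(|\nabla\bfu|)\nabla\bfu$ of class $C^1$ on $\Omega$ and forces ${\rm {\bf div}}(a(|\nabla\bfu|)\nabla\bfu)=a(0)\,\Delta\bfu$ on $\{\nabla\bfu=0\}$ (with $a(0)=b'(0)$); at interior points of $\{\nabla\bfu=0\}$ the field under the divergence in \eqref{pointwise} vanishes together with $\nabla\bfu$ and has a computable limiting divergence, so there \eqref{pointwise} reduces to a nonnegative multiple of \eqref{linearvector} (using $\kappa_N(i_a+2)\leq1$), and since the remaining terms are continuous the inequality extends to $\Omega$ by density of $\{\nabla\bfu\neq0\}$. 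For sharpness, choose $a(t)=t^{p-2}$ with $p=i_a+2$, so that $\gamma\equiv p-2$ everywhere; pick $\omega\in\rNn$ and $w\in\rn$ realising equality in the scalar condition at the eigenvalue where $\max_{\mu\in[0,1]}h$ is attained (for a suitable $\omega$ of rank one or two, with $w$ aligned to the corresponding eigenvector of $M$), take the associated minimal--norm $H$ from the Lagrange computation, and let $\bfu$ be the quadratic polynomial with $\nabla\bfu(0)=\omega$ and $\nabla^2\bfu(0)=H$. Then $\nabla\bfu(0)\neq0$ and all the inequalities above hold with equality at the origin, so $\kappa_N(i_a+2)$ cannot be replaced by any larger constant in \eqref{pointwise}.
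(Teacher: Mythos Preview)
Your derivation of the pointwise identity on $\{\nabla\bfu\neq0\}$ matches the paper's Lemma~\ref{peq}, and your reduction of the algebraic inequality via the minimal-norm Hessian is correct and genuinely different from the paper's route. The paper instead introduces $J=|w|^2$, $J_0=|\omega w|^2$, $J_1=|H|^2$ as functions of the $H^\alpha$ and $\omega^\alpha$, proves an ellipsoid lemma showing that $\{H\omega:H\in\setR^{n\times n}_{\sym},\ |H|\leq 1\}$ equals $\{x:x\cdot W^{-1}x\leq 1\}$ with $W=\tfrac12(|\omega|^2I+\omega\otimes\omega)$, and then maximizes $J-\delta J_0-\sigma J_1$ under normalization constraints by a Lagrange-multiplier argument in the scalar variables $h_\alpha=|H^\alpha|$. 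Your computation $\min\{|H|^2:\omega{:}H=w\}=2w^T(I+M)^{-1}w$ is essentially the dual of their ellipsoid description and shortcuts that whole chain; the subsequent eigenvalue analysis on $M=\omega^T\omega$ recovers both branches of $\kappa_N$ and the $N=1$ versus $N\geq2$ discrepancy exactly as you indicate. This is a cleaner path to the same sharp constant.

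There is, however, a gap in your sharpness argument. You write ``choose $a(t)=t^{p-2}$ with $p=i_a+2$'', but $a$ is part of the hypothesis and may not be replaced: sharpness means that for the \emph{given} $a$ no constant larger than $\kappa_N(i_a+2)$ works in \eqref{pointwise}. Since $i_a$ is an infimum that need not be attained, $\gamma=Q_a(|\nabla\bfu|)$ will in general never equal $i_a$. The remedy (which the paper carries out) is: given $\ep>0$ pick $t_0$ with $i_a\leq Q_a(t_0)\leq i_a+\ep$, take the optimal pair $(\overline\omega,\overline H)$ you constructed for $p=i_a+2$, build a quadratic $\overline\bfu$ with $\nabla\overline\bfu(0)=\overline\omega$ and $\nabla^2\overline\bfu(0)=\overline H$, and rescale $\bfu=t_0\overline\bfu/|\nabla\overline\bfu(0)|$ so that $|\nabla\bfu(0)|=t_0$. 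Inserting this into the identity gives the ratio $\kappa_N(i_a+2)+O(\ep)$, and the arbitrariness of $\ep$ yields sharpness.

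Your extension to $\{\nabla\bfu=0\}$ is also over-elaborate, and the appeal to ``density of $\{\nabla\bfu\neq0\}$'' is unsafe since the critical set can have interior. The paper simply observes that under $b\in C^1([0,\infty))$ the \emph{identity} itself extends: $a'(t)t\to0$ as $t\to0^+$ forces the terms $2Q_a|w|^2$ and $Q_a^2|\omega w|^2$ (interpreted as $0$ at critical points) to vanish there, so the identity reads $|{\bf div}(a(|\nabla\bfu|)\nabla\bfu)|^2=\mathrm{div}[\,\cdots\,]+a(0)^2|\nabla^2\bfu|^2$, and $\kappa_N(i_a+2)\leq1$ finishes it directly.
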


\begin{remark}\label{ahat}
{\rm Observe that 
 the assumption \eqref{bC1}
need not be fulfilled by the functions $a$ appearing in the elliptic  systems  \eqref{system-a} to be considered. Such an assumpton fails, for instance, when $a(t)=t^{p-2}$ with $1<p<2$. This calls for a regularization argument for $a$ in our applications of inequality \eqref{pointwise} to the solutions to the systems in question. The solutions to the regularized systems will also enjoy the  smoothness properties required   on the function $\bfu$ in Theorem \ref{lemma1}. On the other hand, the functions $a$ in the original systems satisfy the conditions required in the last part of the statement of Theorem \ref{lemma1} for the validity of inequality \eqref{pointwise} outside the set $\set{\nabla \bfu = 0}$ of critical points of the function $\bfu$. 
}
\end{remark}

Specializing  Theorem \ref{lemma1} to the case in which $a(t)=t^{p-2}$   yields the following inequality  for the $p$-Laplace operator  we alluded to in Section \ref{intro}.

\begin{corollary}\label{pointiwisep}{\rm {\bf [Pointwise inequality for the $p$-Laplacian]}}
Let $n \geq 2$ and $N \geq 1$. Let $\Omega$ be an open set in $\rn$ and let $\bfu \in C^3(\Omega, \rN )$.  Assume that $p\geq1$.
Then  
\begin{align}\label{pointwisep}
\big|{\rm {\bf div}} (|\nabla \bfu|^{p-2}\nabla \bfu )\big|^2  \geq 
{\rm {div}} \Big[|\nabla \bfu|^{2(p-2)}
\Big((\Delta \bfu)^T \nabla \bfu
- \tfrac 12    \nabla |\nabla \bfu|^2\Big)\Big]
 + \kappa_N(p)  |\nabla \bfu|^{2(p-2)} |\nabla ^2 \bfu|^2  
%
\end{align}
 in $\{\nabla \bfu \neq 0\}$.
Moreover, the constant $ \kappa_N(p)$ is sharp.
\end{corollary}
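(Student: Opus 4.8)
\textbf{Proof plan for Corollary \ref{pointiwisep}.}
The plan is to deduce the corollary as a direct specialization of Theorem \ref{lemma1}. Take $a(t)=t^{p-2}$ for $t>0$; then $a\in C^1((0,\infty))$, $a$ is positive on $(0,\infty)$, and $\tfrac{t\,a'(t)}{a(t)}=p-2$ identically, so $i_a=s_a=p-2$. The hypothesis $i_a\geq -1$ of \eqref{fundhp} becomes $p-2\geq -1$, i.e.\ $p\geq 1$, which is exactly what we assume. Hence the last part of Theorem \ref{lemma1} applies: inequality \eqref{pointwise} holds throughout $\{\nabla\bfu\neq 0\}$, and on that set $a(|\nabla\bfu|)^2=|\nabla\bfu|^{2(p-2)}$, so \eqref{pointwise} reads precisely as \eqref{pointwisep} with constant $\kappa_N(i_a+2)=\kappa_N(p)$. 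This also yields sharpness of $\kappa_N(p)$, since Theorem \ref{lemma1} asserts that $\kappa_N(i_a+2)$ is the sharp constant.

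The only point that requires a word of care is the sharpness claim: Theorem \ref{lemma1} establishes that $\kappa_N(i_a+2)$ is sharp for the class of admissible functions $a$, and one must check that this optimality is witnessed \emph{within} the single power $a(t)=t^{p-2}$ rather than only across the whole class. I would note that the extremizing configurations used to prove sharpness in Theorem \ref{lemma1} are built from the pointwise algebraic inequality that underlies \eqref{pointwise}, evaluated at a fixed value of $|\nabla\bfu|$ and a suitably chosen Hessian $\nabla^2\bfu$; since the constant $\kappa_N$ depends on $a$ only through $i_a$, and the relevant algebraic inequality at a point depends on $a$ only through the ratio $t a'(t)/a(t)$ at that point, the power function $t^{p-2}$ already realizes the worst case. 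Thus no loss occurs in passing to the special case, and the constant $\kappa_N(p)$ in \eqref{pointwisep} cannot be improved.

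I do not expect any genuine obstacle here; the corollary is a routine instantiation. If anything, the main thing to get right is bookkeeping: verifying that the conditions \eqref{positive} and \eqref{fundhp} are the only ones needed for the ``$\{\nabla\bfu\neq 0\}$'' version of Theorem \ref{lemma1} (in particular that \eqref{bC1} is \emph{not} required away from critical points, cf.\ Remark \ref{ahat}), and matching the shifted argument $i_a+2=p$ in the definition \eqref{kappa1}--\eqref{cp} of $\kappa_N$.
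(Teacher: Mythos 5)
Your proposal is correct and matches the paper's treatment exactly: the paper presents Corollary \ref{pointiwisep} as a direct specialization of Theorem \ref{lemma1} to $a(t)=t^{p-2}$, with no additional argument given. Your extra care about sharpness is well-placed and resolves cleanly here, since for the power function $Q_a(t)=t a'(t)/a(t)\equiv p-2$ is constant, so the $\varepsilon$-approximation step in the sharpness proof of Theorem \ref{lemma1} (choosing $t_0$ with $Q_a(t_0)$ close to $i_a$) is vacuous and the extremal examples from Lemma \ref{cor:pmin} give equality directly.
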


Notice that,
if $N=1$, then 
 \begin{equation}\label{>0bis}
\kappa_1(p)>0 \quad \text{if $p >1$},
\end{equation}
whereas, if $N \geq 2$,
\begin{equation}\label{>0}
\kappa_N (p) >0 \quad \text{ if }  \quad p>2(2-\sqrt{2}).
\end{equation}
The gap
between \eqref{>0bis} and \eqref{>0} is responsible for the different implications of  inequality \eqref{pointwise}  in view of second-order $L^2$-estimates  for solutions to 
\begin{equation}\label{localeq}
- {\rm {\bf  div}}( a(|\nabla \bu|) \nabla {\bf u} ) = {\bf f}  \quad {\rm in}\,\,\, \Omega\,,
\end{equation}
according to whether  $N=1$ or $N \geq 2$. Indeed,  inequality \eqref{pointwise} is of use for this purpose only if $\kappa_N(i_a+2)>0$.
%

%
%

Since we are concerned with $L^2$-estimates, the datum  ${\bf f}$ in \eqref{localeq} is assumed to be merely square integrable. Solutions in a suitably generalized sense have thus to be considered. For instance,   the existence of standard weak solutions to the $p$-Laplace system \eqref{plapl} is only guaranteed if $p\geq \frac{2n}{n+2}$. In the scalar case, various definitions of solutions -- entropy solutions, renormalized solutions, SOLA -- that allow for right-hand sides that are just integrable functions, or even finite measures, are available in the literature, and turn out to be a posteriori equivalent. Note that these solutions need not be even weakly differentiable. The case of systems is more delicate and has been less investigated. A notion of solution, which is well tailored for our purposes and will be adopted, is patterned on the approach of \cite{DHM}. Loosely speaking, the solutions in question are only approximately differentiable, and are pointwise limits of solutions to approximating problems with smooth right-hand sides. 

The outline of the derivation of the second-order $L^2$-bounds for these solutions to system \eqref{localeq} via 
Theorem \ref{lemma1} is analogous to the one of \cite{CiMa_ARMA}. However, new technical obstacles have to be faced, due to the non-polynomial growth of the coefficient $a$ in the differential operator. In particular, an $L^1$-estimate, of independent interest, for the expression  $a(|\nabla \bu|) \nabla {\bf u}$ for merely integrable data $\bff$ is established. Such an estimate is already available in the literature for equations, but seems to be new for systems,  and its proof requires an ad hoc Sobolev type inequality in Orlicz spaces.

\smallskip
\par
Our local estimate for  system \eqref{localeq} reads as follows. In the statement, $B_R$ and $B_{2R}$ denote concentric balls, with radius $R$ and $2R$, respectively. 

\begin{theorem}\label{secondloc} {\rm {\bf [Local estimates]}}
Let $\Omega$ be an open set in $\mathbb R^n$, with $n \geq 2$, and let $N \geq 2$. 
Assume that the function $a: (0, \infty) \to (0, \infty)$ is continuously differentiable, and satisfies
\begin{equation}\label{inf}
i_a>2(1-\sqrt 2)\,,
\end{equation}
and  
\begin{equation}\label{sup}
s_a< \infty\,.
\end{equation}
Let   $\bff \in L^2_{\rm loc}(\Omega, \rN)$ and
   let $\bu$ be an approximable  local solution   to  system
\eqref{localeq}.
Then
\begin{equation}\label{secondloc1} a(|\nabla \bu|) \nabla \bu
\in W^{1,2}_{\rm loc}(\Omega, \rNn ),
\end{equation}
and there exists a constant $C=C(n, N, i_a, s_a)$ such that
\begin{align}
  \label{eq:secondloc2}
 R^{-1}\bignorm{a(|\nabla \bu|) \nabla \bu}_{L^2(B_R, \rNn)} & + \,\bignorm{\nabla \big(a(|\nabla \bu|) \nabla \bu\big)}_{L^2(B_R, \rNn)}
 \\ \nonumber  &
   \leq C \Big(\,\|\bff\|_{L^2(B_{2R}, \rN)} + R^{-\frac n2-1}\|a(|\nabla \bu|)\nabla \bu\|_{L^{1}(B_{2R}, \rNn)}\Big).
\end{align}
for any ball $B_{2R} \subset \subset \Omega$.
\end{theorem}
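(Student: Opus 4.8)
The plan is to deduce Theorem \ref{secondloc} from the pointwise inequality of Theorem \ref{lemma1} by an approximation–integration argument, following the scheme of \cite{CiMa_ARMA} but with the modifications demanded by the non-power growth of $a$. First I would set up a regularization: replace $a$ by a family $a_\varepsilon \in C^1([0,\infty))$ with $b_\varepsilon(t) = a_\varepsilon(t)t \in C^1([0,\infty))$ and with indices $i_{a_\varepsilon}, s_{a_\varepsilon}$ converging to $i_a, s_a$ and staying in a fixed compact subinterval of $(2(1-\sqrt2), \infty)$; simultaneously mollify $\bff$ to get $\bff_\varepsilon \in C^\infty$. By standard regularity theory for systems with Uhlenbeck structure and smooth data (as in \cite{Uhl, AcFu, CDiB}) the solutions $\bu_\varepsilon$ of the regularized systems are $C^{2,\alpha}_{\rm loc}$, hence in $C^3$ away from a negligible set — in fact one works on the truncated operators or uses a further smoothing so that $\bu_\varepsilon \in C^3(\Omega,\rN)$ and Theorem \ref{lemma1} applies verbatim. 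Because $i_{a_\varepsilon} + 2 > 2(2-\sqrt2)$, the constant $\kappa_N(i_{a_\varepsilon}+2)$ in \eqref{pointwise} is strictly positive and bounded below uniformly in $\varepsilon$.

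Next I would localize: fix a cutoff $\eta \in C_0^\infty(B_{2R})$ with $\eta \equiv 1$ on $B_R$, $|\nabla\eta|\lesssim R^{-1}$, $|\nabla^2\eta|\lesssim R^{-2}$, multiply the pointwise inequality \eqref{pointwise} (written for $\bu_\varepsilon$, $a_\varepsilon$) by $\eta^2$ and integrate over $B_{2R}$. The divergence term is integrated by parts against $\eta^2$, producing terms with $\nabla(\eta^2)$ and $\nabla^2(\eta^2)$ contracted with $a_\varepsilon(|\nabla\bu_\varepsilon|)^2 \nabla\bu_\varepsilon \otimes \nabla\bu_\varepsilon$ and with $a_\varepsilon(|\nabla\bu_\varepsilon|)^2\nabla^2\bu_\varepsilon$; the left-hand side is controlled by $\|\bff_\varepsilon\|_{L^2(B_{2R})}^2$ using the equation. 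Using $s_{a_\varepsilon}<\infty$ to compare $a_\varepsilon(t)^2 t^2$ with $(a_\varepsilon(t)t)^2$ and to absorb the growth of $a_\varepsilon(|\nabla\bu_\varepsilon|)$, and absorbing the $\nabla^2\bu_\varepsilon$ term carrying a factor $|\nabla\eta|$ via Young's inequality into the good term $\kappa_N(i_{a_\varepsilon}+2)\int \eta^2 a_\varepsilon(|\nabla\bu_\varepsilon|)^2|\nabla^2\bu_\varepsilon|^2$, one arrives at
\begin{equation*}
\int_{B_R} a_\varepsilon(|\nabla\bu_\varepsilon|)^2 |\nabla^2 \bu_\varepsilon|^2\,dx \leq C\Big( \|\bff_\varepsilon\|_{L^2(B_{2R})}^2 + R^{-2}\int_{B_{2R}} a_\varepsilon(|\nabla\bu_\varepsilon|)^2 |\nabla\bu_\varepsilon|^2\,dx\Big).
\end{equation*}
The chain rule identity $\nabla\big(a_\varepsilon(|\nabla\bu_\varepsilon|)\nabla\bu_\varepsilon\big) = a_\varepsilon(|\nabla\bu_\varepsilon|)\nabla^2\bu_\varepsilon + a_\varepsilon'(|\nabla\bu_\varepsilon|)\,\nabla|\nabla\bu_\varepsilon|\otimes\nabla\bu_\varepsilon$, together with $|a_\varepsilon'(t)t|\leq \max(|i_{a_\varepsilon}|,|s_{a_\varepsilon}|)\,a_\varepsilon(t)$ and $|\nabla|\nabla\bu_\varepsilon||\leq|\nabla^2\bu_\varepsilon|$, shows that $|\nabla(a_\varepsilon(|\nabla\bu_\varepsilon|)\nabla\bu_\varepsilon)| \lesssim a_\varepsilon(|\nabla\bu_\varepsilon|)|\nabla^2\bu_\varepsilon|$, so the left side of \eqref{eq:secondloc2} (squared) is bounded by the right-hand quantities above, where $a_\varepsilon(|\nabla\bu_\varepsilon|)|\nabla\bu_\varepsilon| = b_\varepsilon(|\nabla\bu_\varepsilon|)$.

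The remaining point is to pass to the limit $\varepsilon\to0$ and to reinterpret the term $\int_{B_{2R}} b_\varepsilon(|\nabla\bu_\varepsilon|)^2\,dx$ in terms of the $L^1$ norm claimed in \eqref{eq:secondloc2}, rather than an $L^2$ norm. Here I would invoke the notion of approximable solution: $\bu_\varepsilon\to\bu$ and $\nabla\bu_\varepsilon\to\nabla\bu$ a.e., so by Fatou the limiting estimate holds with liminf on the left; and one uses the interpolation step of \cite{CiMa_ARMA}, namely that an $L^2$ bound on the second-order quantity over a slightly larger ball upgrades, via a Sobolev–Poincaré inequality plus a covering/iteration argument, to control $\|b(|\nabla\bu|)\|_{L^2}$ over $B_{2R}$ by $\|\bff\|_{L^2(B_{3R})} + R^{-n/2}\|b(|\nabla\bu|)\|_{L^1(B_{3R})}$, after which a standard interpolation-on-balls lemma removes the loss from $B_{3R}$ back to $B_{2R}$. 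The $L^1$ estimate for $a(|\nabla\bu|)\nabla\bu$ for merely integrable data — needed to make the right-hand side finite and to justify the approximation scheme for the notion of solution — is the ad hoc ingredient mentioned in the text, relying on an Orlicz–Sobolev inequality adapted to $B$; I would establish it as a separate lemma and cite it here.

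I expect the main obstacle to be the absorption/bookkeeping step in the second paragraph: one must keep every constant uniform in $\varepsilon$, which forces careful use of $i_a > 2(1-\sqrt2)$ (so that $\kappa_N(i_a+2)>0$ with room to spare) and of $s_a<\infty$ (so that all the $\Delta_2$-type comparisons $a(t)^2t^2 \simeq b(t)^2$, $|a'(t)t|\lesssim a(t)$, and the doubling of $B$ are available with constants depending only on $n,N,i_a,s_a$). The passage from an $L^2$ self-improving inequality to one with an $L^1$ term on the right, handling the non-$\Delta_2$-free nature of the problem without power growth, is the second delicate point and is where the structure of \cite{CiMa_ARMA} must be adapted rather than quoted.
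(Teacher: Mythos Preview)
Your proposal is essentially correct and follows the same route as the paper: regularize $a$ so that Theorem~\ref{lemma1} applies, multiply the pointwise inequality by a cutoff and integrate, absorb the cross terms using $\kappa_N(i_a+2)>0$ and $s_a<\infty$, then pass to the limit. Two points deserve correction.

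First, the paper separates the two approximations rather than running them simultaneously. It first \emph{assumes} $\bff\in C^\infty$ and, with the specific choice $a_\varepsilon(t)=a(\sqrt{t^2+\varepsilon^2})$, solves the Dirichlet problem for the $a_\varepsilon$-system on $B_{2R}$ with boundary datum $\bu$; the solutions $\bu_\varepsilon$ are shown to be $C^\infty$ by Schauder iteration. The convergence $\nabla\bu_{\varepsilon_k}\to\nabla\bu$ a.e.\ is then \emph{proved} via a monotonicity argument using the auxiliary map $V_\varepsilon(P)=\sqrt{a_\varepsilon(|P|)}\,P$ (Lemma~\ref{aeps2}), not pulled from the definition of approximable solution. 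Only after the $\varepsilon$-limit is taken, yielding the estimate for weak solutions with smooth $\bff$, is the smoothness of $\bff$ removed via the approximable-solution sequence $\bu_k$. Your sentence ``invoke the notion of approximable solution: $\bu_\varepsilon\to\bu$ and $\nabla\bu_\varepsilon\to\nabla\bu$ a.e.'' conflates these two limits: your $\bu_\varepsilon$ solve a different system from the one in the definition, so their convergence to $\bu$ requires its own argument. Also, the passage to the limit on the left-hand side is by weak $W^{1,2}$ compactness and identification of the limit, not by Fatou.

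Second, the Orlicz--Sobolev $L^1$ estimate you mention at the end is not needed for Theorem~\ref{secondloc}: the definition of approximable local solution already builds in $a(|\nabla\bu|)|\nabla\bu|\in L^1_{\rm loc}$ and the convergence \eqref{approxaloc}, so the right-hand side of \eqref{eq:secondloc2} is finite and stable under the limit by hypothesis. That estimate (Proposition~\ref{talentil2} in the paper) is only invoked for the global Dirichlet problem. Otherwise, your cutoff-and-absorb step, the chain-rule bound $|\nabla(a_\varepsilon(|\nabla\bu_\varepsilon|)\nabla\bu_\varepsilon)|\lesssim a_\varepsilon(|\nabla\bu_\varepsilon|)|\nabla^2\bu_\varepsilon|$, and the $L^2$-to-$L^1$ self-improvement on the right (which the paper packages, without details, as Lemma~\ref{step1local}) match the paper's argument.
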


\begin{remark}\label{scaling}
{\rm 
In particular, if $\Omega= \mathbb R^n$ and, for instance,  $a(|\nabla \bu|)\nabla \bu \in L^1(\mathbb R^n, \rNn)$, then passing to the limit in inequality  \eqref{eq:secondloc2} as $R\to \infty$ tells us that }
\begin{equation}\label{Rn}
    \bignorm{\nabla \big(a(|\nabla \bu|) \nabla \bu\big)}_{L^2(\mathbb R^n, \rNn)}
   \leq C  \|\bff\|_{L^1(\mathbb R^n, \rN)}.
\end{equation}
\end{remark}

We next deal with global estimates for solutions to   system \eqref{localeq}, subject to Dirichlet homogeneous boundary conditions. Namely, we consider solutions to problems of the form
\begin{equation}\label{eqdir}
\begin{cases}
-{\rm {\bf  div}} (a(|\nabla \bu|)\nabla {\bfu} ) = {\bff}  & {\rm in}\,\,\, \Omega\\
 {\bfu} =0  &
{\rm on}\,\,\,
\partial \Omega \,.
\end{cases}
\end{equation}
 
As shown by classical counterexamples, yet in the linear case, global  estimates involving second-order derivatives  of solutions can only hold under suitable  regularity assumptions on $\partial \Omega$. Specifically,   information on the (weak) curvatures of  $\partial \Omega$
 is relevant in this connection.  Convexity of the domain $\Omega$, which results in a positive semidefinite second fundamental form of $\partial \Omega$,  is well known to ensure bounds in $W^{2,2}(\Omega, \rNn)$ for the solution $\bu$ to the homogeneous  Dirichlet  problem associated with the linear system \eqref{poisson} in terms of the $L^2(\Omega, \rN)$ norm of $\bff$ -- see \cite{Grisvard}. The following result provides us with an analogue for  problem \eqref{eqdir}, for the same class of nonlinearities $a$ as in Theorem \ref{secondloc}.

\begin{theorem}\label{secondconvexteo} {\rm {\bf [Global estimates in convex domains]}}
Let $\Omega$ be any  bounded convex open set in $\rn$, with $n \geq 2$, and  let $N \geq 2$. Assume that the function $a: (0, \infty) \to (0, \infty)$ is  continuously differentiable  and fulfills conditions \eqref{inf} and \eqref{sup}.
Let ${\bff} \in
L^2(\Omega, \rN )$ and   let $\bu$ be an approximable  solution   to  the
Dirichlet problem \eqref{eqdir}.
%
%
Then
\begin{equation}\label{secondconvex} a(|\nabla \bu|) \nabla \bu
\in W^{1,2}(\Omega, \rNn ),
\end{equation}
and
\begin{equation}\label{secondconvex1} C_1  \|{\bff}\|_{L^2(\Omega, \rN)} \leq \|a(|\nabla \bu|) \nabla \bu\|_{W^{1,2}(\Omega, \rNn)} \leq C_2  \|{\bff}\|_{L^2(\Omega, \rN)} 
\end{equation}
for some positive constants $C_1=C_1(n,  N, i_a, s_a)$ and  $C_2=C_2(N, i_a, s_a,  \Omega)$.
%
%
%
\end{theorem}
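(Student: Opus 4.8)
The plan is to reduce the global estimate in Theorem~\ref{secondconvexteo} to the local estimate of Theorem~\ref{secondloc} away from $\partial\Omega$, combined with a boundary estimate obtained by integrating the pointwise inequality of Theorem~\ref{lemma1} over $\Omega$ and exploiting convexity to control the resulting boundary term. As in Remark~\ref{ahat}, the coefficient $a$ in \eqref{eqdir} need not satisfy \eqref{bC1} (e.g.\ the $p$-Laplacian with $p<2$), so the argument must be carried out on a regularized problem first. First I would introduce a family of approximating problems: mollify $\bff$ to get smooth $\bff_\ep$, and replace $a$ by $a_\ep$ (for instance $a_\ep(t) = a(\sqrt{t^2+\ep^2})$ suitably adjusted, or a standard approximation keeping $i_{a_\ep}$ bounded away from $2(1-\sqrt2)$ and $s_{a_\ep}$ bounded uniformly in $\ep$) so that $a_\ep$ is positive, smooth, satisfies \eqref{positive}, \eqref{fundhp}, \eqref{bC1}, and the corresponding $B_\ep$ is strictly convex with the same growth structure uniformly in $\ep$. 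By the classical regularity theory for smooth uniformly elliptic systems with the Uhlenbeck structure (on a convex, hence Lipschitz, domain the weak solution $\bu_\ep$ of the regularized Dirichlet problem is smooth up to the boundary, or at least $C^3$ in the interior and $W^{2,2}$ globally), the hypotheses of Theorem~\ref{lemma1} hold for $\bu_\ep$.

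Next, the core step: integrate inequality \eqref{pointwise} (written for $a_\ep$ and $\bu_\ep$) over $\Omega$. Using the divergence theorem, the divergence term becomes a surface integral over $\partial\Omega$ of $a_\ep(|\nabla\bu_\ep|)^2\big((\Delta\bu_\ep)^T\nabla\bu_\ep - \tfrac12\nabla|\nabla\bu_\ep|^2\big)\cdot\nu$, where $\nu$ is the outward normal. Since $\bu_\ep = 0$ on $\partial\Omega$, the tangential derivatives of $\bu_\ep$ vanish on $\partial\Omega$, and a standard computation (this is exactly the mechanism behind the Grisvard-type estimate for \eqref{poisson} on convex domains) shows that this boundary integrand equals $-a_\ep(|\nabla\bu_\ep|)^2\,\mathrm{II}(\nabla\bu_\ep,\nabla\bu_\ep)$ up to a nonnegative factor, where $\mathrm{II}$ is the second fundamental form of $\partial\Omega$. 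Convexity of $\Omega$ makes $\mathrm{II}$ positive semidefinite, so this boundary term has a favorable sign: it can be discarded from the lower bound. (For a general convex domain $\partial\Omega$ need not be $C^2$, so $\mathrm{II}$ must be interpreted in the sense of measures or via approximation of $\Omega$ by smooth convex domains $\Omega_k\uparrow\Omega$; I would carry out the boundary computation on $\Omega_k$ and pass to the limit.) What remains is
\[
\int_\Omega \kappa_N(i_{a_\ep}+2)\, a_\ep(|\nabla\bu_\ep|)^2|\nabla^2\bu_\ep|^2\,dx \;\le\; \int_\Omega \big|{\rm{\bf div}}(a_\ep(|\nabla\bu_\ep|)\nabla\bu_\ep)\big|^2\,dx \;=\; \int_\Omega |\bff_\ep|^2\,dx \;\le\; \|\bff\|_{L^2(\Omega,\rN)}^2,
\]
where $\kappa_N(i_{a_\ep}+2)$ is bounded below by a positive constant depending only on $N, i_a, s_a$ thanks to \eqref{inf} and the construction of $a_\ep$. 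Then the chain rule bound $|\nabla(a_\ep(|\nabla\bu_\ep|)\nabla\bu_\ep)| \le C(1+s_{a_\ep})\,a_\ep(|\nabla\bu_\ep|)|\nabla^2\bu_\ep|$ converts this into a uniform bound on $\|\nabla(a_\ep(|\nabla\bu_\ep|)\nabla\bu_\ep)\|_{L^2(\Omega,\rNn)}$. A uniform $L^2$ (indeed $L^1$, via the Sobolev/Orlicz estimate alluded to in the text, plus interpolation with the energy bound) bound on $a_\ep(|\nabla\bu_\ep|)\nabla\bu_\ep$ itself gives the full $W^{1,2}$ norm.

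Finally, I would pass to the limit $\ep\to0$: the uniform $W^{1,2}$ bound yields weak convergence of $a_\ep(|\nabla\bu_\ep|)\nabla\bu_\ep$ in $W^{1,2}(\Omega,\rNn)$, and identifying the limit with $a(|\nabla\bu|)\nabla\bu$ for the approximable solution $\bu$ (using pointwise a.e.\ convergence of $\nabla\bu_\ep$, which is part of what "approximable solution" delivers, together with continuity of $a$) gives \eqref{secondconvex} and the second inequality in \eqref{secondconvex1} by weak lower semicontinuity of the norm. The first inequality in \eqref{secondconvex1}, $C_1\|\bff\|_{L^2} \le \|a(|\nabla\bu|)\nabla\bu\|_{W^{1,2}}$, is the easy direction: apply ${\rm{\bf div}}$ to $a(|\nabla\bu|)\nabla\bu$ and use $\|\bff\|_{L^2} = \|{\rm{\bf div}}(a(|\nabla\bu|)\nabla\bu)\|_{L^2} \le \|\nabla(a(|\nabla\bu|)\nabla\bu)\|_{L^2}$. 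The main obstacle I anticipate is the rigorous handling of the boundary term for a general (merely) convex domain whose boundary is only Lipschitz: making sense of, and controlling the sign of, the second-fundamental-form contribution requires careful approximation by smooth convex domains and a uniform-in-$k$ control of all the interior quantities up to $\partial\Omega_k$, which is delicate precisely because the global $W^{2,2}$ regularity one wants to use is itself what is being proved; the way around this is the two-tier approximation (in $\ep$ for the nonlinearity and the datum, in $k$ for the domain) together with the fact that on each smooth $\Omega_k$ the fully regularized problem has a genuinely smooth solution for which the integration by parts is classical.
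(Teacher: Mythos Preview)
Your approach is essentially the paper's: regularize $a$ by $a_\varepsilon$, approximate the convex domain by smooth convex domains, integrate the pointwise inequality \eqref{pointwise} and use convexity to discard the boundary term (this is precisely the content of Lemma~\ref{stepglobal} in the convex case), control the lower-order $L^1$ term via Proposition~\ref{talentil2}, and pass to the limit. The paper organizes this as three separate steps (first $a_\varepsilon \to a$ with smooth $\bff$ and smooth $\partial\Omega$, then $\Omega_m \to \Omega$, then $\bff_k \to \bff$ via the approximable-solution framework), whereas you merge the regularization of $a$ and of $\bff$ into a single parameter, but the substance is the same. One minor point: the paper approximates $\Omega$ from \emph{outside} by smooth convex domains (extending $\bff$ by zero), which makes the Dirichlet condition and the uniform constants automatic; your $\Omega_k \uparrow \Omega$ is less convenient for preserving the boundary condition.

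There is, however, one genuine gap in your limit argument. You write that the pointwise a.e.\ convergence $\nabla\bu_\varepsilon \to \nabla\bu$ ``is part of what `approximable solution' delivers.'' It is not: the definition of approximable solution produces a sequence $\bu_k$ solving problems with smooth data $\bff_k$ but with the \emph{original} nonlinearity $a$, and it is for that specific sequence that $\nabla\bu_k \to \nabla\bu$ a.e.\ is guaranteed. Your $\bu_\varepsilon$ solve problems with the \emph{modified} nonlinearity $a_\varepsilon$, so the framework says nothing directly about $\nabla\bu_\varepsilon$. The paper closes this gap by a separate monotonicity argument (see the derivation of \eqref{may48} and \eqref{akmglob}): one tests both the $a_\varepsilon$-system and the $a$-system with $\bu_\varepsilon - \bu$, subtracts, and uses the equivalence $(a_\varepsilon(|P|)P - a_\varepsilon(|Q|)Q)\cdot(P-Q) \approx |V_\varepsilon(P)-V_\varepsilon(Q)|^2$ from Lemma~\ref{aeps2} together with the uniform $a_\varepsilon \to a$ convergence \eqref{may51} to force $V_\varepsilon(\nabla\bu_\varepsilon) \to V(\nabla\bu)$ in $L^2$, hence $\nabla\bu_\varepsilon \to \nabla\bu$ a.e.\ along a subsequence. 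Only after this is established can you identify the weak $W^{1,2}$ limit of $a_\varepsilon(|\nabla\bu_\varepsilon|)\nabla\bu_\varepsilon$ as $a(|\nabla\bu|)\nabla\bu$. This step is not difficult, but it is not free, and your sketch skips it.
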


The global assumption on the signature of the second fundamental form of $\partial \Omega$ entailed by the convexity of $\Omega$ can be replaced by  local   conditions on the relevant fundamental form. This is the subject of Theorem \ref{seconddir}. 
\par The finest assumption on $\partial \Omega$ that we are able to allow for amounts to a decay estimate of the integral of its weak curvatures over subsets of $\partial \Omega$ whose diameter approaches zero, in terms of their capacity. 
 Specifically, suppose that $\Omega$ is a bounded Lipschitz domain such that $\partial \Omega \in W^{2,1}$. This  means
 that the domain $\Omega$ is    locally the subgraph of a Lipschitz continuous function of $(n-1)$ variables, which is also twice weakly differentiable.
Denote by 
$\mathcal B$ the weak second fundamental form on $\partial \Omega$, by
$|\mathcal B|$ its norm, and set 
\begin{equation}\label{defK}
\mathcal K_\Omega(r) =
\sup_{
\substack{
 E\subset \partial \Omega \cap B_r(x) \\
  x\in \partial \Omega
}
}
 \frac{\int _E |\mathcal B|d\hh}{{\rm cap}_{B_1(x)} (E)}\qquad \hbox{for $r\in (0,1)$}\,.
\end{equation}
Here, $B_r(x)$ stands for the ball centered at $x$, with radius $r$,  the notation ${\rm cap}_{B_1(x)}(E)$ is adopted for the capacity of the set $E$ relative to the ball $B_1(x)$, and $\hh$ is the $(n-1)$-dimensional Hausdorff measure. The decay we hinted  to above consists in a smallness condition on the limit at as $r\to 0^+$ of the function $\mathcal K_\Omega(r)$. The smallness depends on $\Omega$ through its diameter $d_\Omega$ and its Lipschitz characteristic $L_\Omega$.
The latter quantity is defined as the maximum among the Lipschitz constants of the functions that locally describe the intersection of $\partial \Omega$ with balls centered on $\partial \Omega$, and the reciprocals of their radii. Here, and in similar occurrences in what follows, the dependence of a constant on $d_\Omega$ and   $L_\Omega$ is understood just via an upper bound for them.
\par  Theorem \ref{seconddir}   also 
 provides us with an ensuing alternate assumption on $\partial \Omega$, which  only depends on integrability properties of the weak curvatures of $\partial \Omega$. Precisely, it requires the membership of  $|\mathcal B|$   in a suitable function space $X(\partial \Omega)$ over $\partial \Omega$ defined in terms of weak type  norms, and a smallness  
  condition on the decay of these norms of  $|\mathcal B|$ over  balls  centered on $\partial \Omega$. This membership will be denoted by $\partial \Omega \in W^2X$. The relevant weak space  is defined as
\begin{equation}\label{Xspace}
X (\partial \Omega)= \begin{cases} L^{n-1, \infty}(\partial \Omega) & \quad \hbox{if $n \ge 3$,}
\\
L^{1, \infty} \log L  (\partial \Omega)& \quad \hbox{if $n =2$.}
\end{cases}
\end{equation}
Here, $L^{n-1, \infty}(\partial \Omega)$ denotes the weak-$L^{n-1}(\partial \Omega)$ space, and $L^{1, \infty} \log L  (\partial \Omega)$ denotes the weak-$L\log L   (\partial \Omega)$ space  (also called Marcinkiewicz spaces), with respect to the $(n-1)$-dimensional Hausdorff measure.

\begin{theorem}\label{seconddir} {\rm {\bf [Global estimates under  minimal  boundary regularity]}}
 Let $\Omega$ be a bounded Lipschitz  domain in $\rn$, $n \geq 2$, such that   $\partial \Omega \in  W^{2,1}$, and   let $N \geq 2$.
Assume that the function $a: (0, \infty) \to (0, \infty)$ is  continuously differentiable   and fulfills conditions \eqref{inf} and \eqref{sup}.
Let ${\bff} \in
L^2(\Omega, \rN )$ and   let $\bu$ be an approximable  solution   to  the
Dirichlet problem \eqref{eqdir}.
%
\\ (i) There exists a constant $c=c(n, N, i_a, s_a, L_\Omega, d_\Omega)$ such that, if
\begin{equation}\label{capcond}
\lim _{r\to 0^+} \mathcal K _\Omega (r) < c,
\end{equation}
then $a(|\nabla \bu|) \nabla \bu
\in W^{1,2}(\Omega, \rNn )$, and inequality \eqref{secondconvex1} holds.
\\ (ii) 
Assume, in addition, that $\partial \Omega \in W^2X$, where $X(\partial \Omega)$ is the space defined by \eqref{Xspace}. There exists a constant $c=c(n, N, i_a, s_a, L_\Omega, d_\Omega)$ such that, if   
\begin{equation}\label{smalln}
 \lim _{r\to 0^+} \Big(\sup _{x \in \partial \Omega} \|\mathcal B \|_{X(\partial \Omega \cap B_r(x))}\Big) < c \,,
\end{equation}
then $a(|\nabla \bu|) \nabla \bu
\in W^{1,2}(\Omega, \rNn )$, and inequality \eqref{secondconvex1} holds.
\end{theorem}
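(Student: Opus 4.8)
\emph{Proof proposal.}
The plan is to mimic the scheme of the global estimate in \cite{CiMa_ARMA}, replacing the boundary analysis by a capacitary argument compatible with the assumption $\partial\Omega\in W^{2,1}$. First I would prove an a priori estimate in the regular setting: $a\in C^1([0,\infty))$ with $b\in C^1([0,\infty))$ (so that Theorem \ref{lemma1} is available in all of $\Omega$), $\partial\Omega$ smooth, and $\bff$ smooth, with every constant depending only on $n$, $N$, $i_a$, $s_a$, $L_\Omega$, $d_\Omega$ and the smallness thresholds. Then I would remove these extra hypotheses by approximation, exactly as foreseen in Remark \ref{ahat}: regularize $a$ so that \eqref{bC1} holds; approximate $\bff$ in $L^2(\Omega,\rN)$ by smooth data, so that the approximable solution $\bu$ is the pointwise limit of the (smooth) solutions of the approximating problems; and exhaust $\Omega$ from inside by smooth domains $\Omega_k$ whose curvature functionals $\mathcal K_{\Omega_k}$ still obey \eqref{capcond}. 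Passing to the limit in the uniform bound, using lower semicontinuity of the $W^{1,2}$ norm and stability of the notion of approximable solution, yields \eqref{secondconvex} and \eqref{secondconvex1}.

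For the a priori estimate I would integrate the pointwise inequality \eqref{pointwise} over $\Omega$; note that $i_a>2(1-\sqrt2)>-1$, so $\kappa_N(i_a+2)>0$ by \eqref{>0}. The divergence term, integrated over $\Omega$, produces a boundary integral; since $\bu=0$ on $\partial\Omega$, the tangential part of $\nabla\bu$ vanishes there, whence $\nabla\bu=\partial_\nu\bu\otimes\nu$ on $\partial\Omega$, and the standard computation following \cite{Grisvard} shows that the modulus of this boundary integral is bounded by $C\int_{\partial\Omega}|\mathcal B|\,a(|\nabla\bu|)^2|\nabla\bu|^2\,d\hh$. Writing $w=a(|\nabla\bu|)\nabla\bu$ and using that the bounds $i_a>-1$ and $s_a<\infty$ make $t\mapsto a(t)t$ a $\Delta_2$--$\nabla_2$ function — so that $\int_\Omega|\nabla w|^2\,dx$ is comparable, up to constants depending on $i_a$ and $s_a$, to $\int_\Omega a(|\nabla\bu|)^2|\nabla^2\bu|^2\,dx$, and $a(|\nabla\bu|)^2|\nabla\bu|^2\le C|w|^2$ — this boundary term becomes, modulo a controlled multiple of the good term, $C\int_{\partial\Omega}|\mathcal B|\,|w|^2\,d\hh$.

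The crux is then a Maz'ya-type capacitary trace inequality. After a partition of unity $\{\eta_j\}$ subordinate to a finite cover of $\partial\Omega$ by balls $B_r(x_j)$ of radius $r$ so small that $\mathcal K_\Omega(r)$ lies below the threshold $c$, I would invoke, for the localized pieces $v=\eta_j w$, an estimate of the form
\begin{equation*}
\int_{\partial\Omega\cap B_r(x_j)}|\mathcal B|\,|v|^2\,d\hh\leq C\,\mathcal K_\Omega(r)\int_{\Omega\cap B_{2r}(x_j)}|\nabla v|^2\,dx+C(r)\int_{\Omega\cap B_{2r}(x_j)}|v|^2\,dx .
\end{equation*}
Choosing $c$ small enough lets the first term on the right be absorbed into $\kappa_N(i_a+2)\int_\Omega|\nabla w|^2\,dx$; the second is of lower order. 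Summing over $j$, handling the interior balls by Theorem \ref{secondloc}, and finally absorbing the remaining $\|w\|_{L^2(\Omega)}$ — ultimately controlled by $\|a(|\nabla\bu|)\nabla\bu\|_{L^1(\Omega)}\le C\,d_\Omega^{n/2}\|\bff\|_{L^2(\Omega)}$ through the $L^1$-estimate recalled in Section \ref{S:main} — gives the right-hand inequality in \eqref{secondconvex1}. The left-hand inequality is immediate, since $\bff=-\mathrm{div}\,w$ forces $\|\bff\|_{L^2(\Omega)}\le\|\nabla w\|_{L^2(\Omega)}$.

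Part (ii) should be deduced from part (i) by verifying that \eqref{smalln} implies \eqref{capcond}. This rests on the comparison, for $E\subset\partial\Omega\cap B_r(x)$, of the form $\int_E|\mathcal B|\,d\hh\le C\,\|\mathcal B\|_{X(\partial\Omega\cap B_r(x))}\,\mathrm{cap}_{B_1(x)}(E)$ with $C=C(n,L_\Omega)$, which expresses the embedding of the relative capacity into the Köthe dual of $X(\partial\Omega)$: for $n\ge3$ the sharp Lorentz exponent $L^{n-1,\infty}$ reflects the $(n-1)$-dimensional nature of $\partial\Omega$, while for $n=2$ the logarithmic refinement $L^{1,\infty}\log L$ is forced by the logarithmic behaviour of the relevant one-dimensional capacity. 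Dividing by $\mathrm{cap}_{B_1(x)}(E)$ and taking the supremum gives $\mathcal K_\Omega(r)\le C\sup_{x\in\partial\Omega}\|\mathcal B\|_{X(\partial\Omega\cap B_r(x))}$, so \eqref{smalln} with small $c$ yields \eqref{capcond}. I expect the main obstacle to be the a priori boundary estimate under merely $W^{2,1}$ regularity: the flattening maps are only Lipschitz with second derivatives in $L^1$, classical trace theorems are unavailable, and one must run the capacitary trace inequality carefully while tracking the dependence of every constant on $L_\Omega$ and $d_\Omega$ and controlling the error terms generated when second derivatives are transported through the non-smooth change of variables.
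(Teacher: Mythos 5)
Your overall scheme — isolate an a priori estimate in the regular setting (smooth $a$, smooth $\partial\Omega$, smooth $\bff$), obtained by integrating the pointwise inequality \eqref{pointwise} and controlling the boundary divergence term via a Maz'ya-type capacitary trace inequality, then remove the extra hypotheses by approximation, and derive Part (ii) from Part (i) through the embedding $\mathcal K_\Omega(r)\lesssim\sup_x\|\mathcal B\|_{X(\partial\Omega\cap B_r(x))}$ — is exactly the paper's strategy. The lower-order absorption via the $L^1$-gradient estimate (Proposition \ref{talentil2}) and the $\ep$-regularization of $a$ in the style of Remark \ref{ahat} also match. The sketch of the capacitary argument itself is more detailed than what the paper records (the paper simply invokes Lemma \ref{stepglobal}, which in turn cites the earlier proof from \cite{CiMa_ARMA}), and using Theorem \ref{secondloc} for the interior balls after a partition of unity is a plausible variant.

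The one genuine gap is the direction of the domain approximation. You propose to \emph{exhaust $\Omega$ from the inside} by smooth domains $\Omega_k$ with $\mathcal K_{\Omega_k}$ still obeying \eqref{capcond}, but for a domain that is merely Lipschitz with $\partial\Omega\in W^{2,1}$ there is no known construction producing such an \emph{inner} exhaustion with uniform curvature control: an inner parallel set (or a local graph mollification shifted downward) can develop singularities or lose the uniform bound on the curvature functional. The paper approximates \emph{from the outside}: Lemma \ref{approxcap} (cited from \cite[Lemma 5.2]{CiMa_JMPA}) provides smooth $\Omega_m\supset\Omega$ with $\Omega_m\to\Omega$ in Hausdorff distance, $L_{\Omega_m}\lesssim L_\Omega$, $d_{\Omega_m}\lesssim d_\Omega$, and — crucially — $\mathcal K_{\Omega_m}(r)\leq C\,\mathcal K_\Omega(r)$ uniformly for small $r$. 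This outer thickening is what makes the curvature control stable under the smoothing; the data $\bff$ is extended by $0$ to $\Omega_m$, the global estimate is applied on each $\Omega_m$, and uniqueness of the weak solution on $\Omega$ forces the limit to be the original solution. Without a substitute for Lemma \ref{approxcap} in the inner setting, your Step 2 does not close; if you switch to outer approximation and invoke that lemma, the rest of your argument goes through as in the paper.
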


\begin{remark}\label{sharpcond}{\rm
We emphasize that the assumptions on $\partial \Omega$ in Theorem \ref{seconddir} are essentially sharp. For instance, the mere finiteness of the limit in \eqref{capcond} is not sufficient for the conclusion to hold. As shown in  \cite{Ma67, Ma73}, there exists a one-parameter family of domains $\Omega$ such that  $\mathcal K_{\Omega}(r)<\infty$ for $r\in (0,1)$ and the solution to the homogeneous Dirichlet problem for \eqref{poisson}, with a smooth right-hand side $\bff$,  belongs to $W^{2,2}({\Omega})$ only for those values of the parameter which make the limit  in  \eqref{capcond} smaller than a critical (explicit) value.
\\ A similar phenomenon occurs in connection with 
assumption \eqref{smalln}. An example from \cite{KrolM} applies to demonstrate its optimality yet for the scalar $p$-Laplace equation.
Actually, open sets $\Omega \subset \mathbb R^3$, with   $\partial {\Omega} \in W^2L^{2, \infty}$, are displayed where the solution $\bfu$ to the homogeneous Dirichlet problem for \eqref{plapl}, 
with $N=1$, $p\in (\tfrac 32,2]$ and a smooth right-hand side $\bff$, is such that $|\nabla u|^{p-2}\nabla u \notin W^{1,2}(\Omega)$. This lack of regularity is due to the fact that the  limit in  \eqref{smalln}, though finite, is not small enough. Similarly, if $n=2$ there exist open sets $\Omega$, with 
$\partial {\Omega} \in  W^2L^{1, \infty} \log L$, for which the limit in  \eqref{smalln}   exceeds  some threshold,
and  where the  solution to  the    homogeneous Dirichlet problem for \eqref{poisson},  with  a smooth right-hand side, does not belong to $W^{2,2}({\Omega})$ -- see  \cite{Ma67}.}
\end{remark}

\begin{remark}\label{finer}{\rm The one-parameter family of domains $\Omega$ mentioned in the first part of Remark \ref{sharpcond} with regard to condition \eqref{capcond} is such that $\partial {\Omega} \notin   W^2L^{n-1, \infty}$ if $n \geq 3$. Hence, assumption  \eqref{smalln}  is not fulfilled even for those values of the parameter which render \eqref{capcond} true. This shows that the latter assumption is indeed weaker than \eqref{smalln} .
}
\end{remark}

\begin{remark}\label{rembound}
{\rm 
Condition \eqref{smalln} certainly holds  if $n \geq 3$ and $\partial {\Omega} \in W^{2,n-1}$, and if $n=2$ and $\partial {\Omega} \in W^{2}L\log L$ (and hence,  if  $\partial {\Omega} \in W^{2,q}$ for some $q>1$).  This is due to the fact that, under these assumptions, the limit in  \eqref{smalln} vanishes. 
In particular, assumption  \eqref{smalln} is satisfied if $\partial \Omega \in C^2$.}
\end{remark}


\section{The pointwise inequality}\label{S:point}

This section is devoted to the proof of Theorem \ref{lemma1}, which is split in several lemmas. The point of departure is  a pointwise  identity, of possible independent use, stated in Lemma \ref{peq}.

Given a positive function
 $a \in C^1(0, \infty)$, we
define the function $Q_a: [0, \infty) \to \mathbb R$  
\begin{equation}\label{I}
Q_a(t) = \frac{t a'(t)}{a(t)}\qquad \text{for $t>0$.}
\end{equation}
Hence, 
 \begin{equation}\label{Qa}
i_a= \inf _{t >0} Q_a(t) \qquad \hbox{and} \qquad
s_a= \sup _{t >0} Q_a(t),
%
\end{equation}
where  $i_a$ and $s_a$ are the indices given by \eqref{ia}.

%
%

\begin{lemma}\label{peq} Let $n$, $N$, $\Omega$ and $\bfu$ be as in Theorem \ref{lemma1}.
Assume that the function  $a \in C^0([0, \infty))$  and satisfies conditions \eqref{positive}--\eqref{bC1}.
Then  
\begin{align}\label{pointwiseq}
\big|{\rm {\bf div}} (a(|\nabla \bfu|)\nabla \bfu )\big|^2  &= {\rm {div}} \Big[a(|\nabla \bfu|)^2  \Big(
(\Delta \bfu)^T \nabla \bfu
- \tfrac 12   \nabla |\nabla \bfu|^2\Big)\Big]
\\ \nonumber &  +
a(|\nabla \bfu|)^{2}  \Bigg[ |\nabla ^2 \bfu|^2 
 +2 Q_a(|\nabla \bfu|)|\nabla |\nabla \bfu||^2 +Q_a(|\nabla \bfu|)^2\bigg| \frac{\nabla \bfu}{|\nabla \bfu|} (\nabla |\nabla \bfu|)^T \bigg|^2\Bigg] \quad \text{in $\Omega$, }
\end{align}
where the last two addends in square brackets on the right-hand side of equation \eqref{pointwiseq} have to interpreted as $0$   if $\nabla \bfu =0$.
\\ If $a$   is   just defined in $(0,\infty)$, $a \in C^1((0, \infty))$, and  conditions \eqref{positive} and \eqref{fundhp} are fulfilled,
then inequality \eqref{pointwise} continues to hold  in the set $\{\nabla \bfu \neq 0\}$.
\end{lemma}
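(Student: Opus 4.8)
The plan is to establish the identity \eqref{pointwiseq} by a direct computation, expanding the divergence on the left and carefully regrouping terms so that everything which is not manifestly a divergence collects into the nonnegative-looking bracket. First I would work in the open set $\{\nabla \bfu \neq 0\}$, where $a(|\nabla \bfu|)$ is a smooth function of $\bfu$ via the chain rule, and set $g = |\nabla \bfu|$ and $a = a(g)$ for brevity. Writing the $k$-th component of ${\rm {\bf div}}(a\nabla \bfu)$ as $\sum_i \partial_i(a\, \partial_i u^k) = a\,\Delta u^k + a'(g)\,(\partial_i g)(\partial_i u^k)$, I would square, sum over $k$, and obtain
\begin{align*}
\big|{\rm {\bf div}}(a\nabla\bfu)\big|^2 &= a^2 |\Delta\bfu|^2 + 2 a\,a'(g)\,(\Delta\bfu)^T\nabla\bfu\cdot\nabla g + a'(g)^2\,\bigl|\tfrac{\nabla\bfu}{g}(\nabla g)^T\bigr|^2 g^2 .
\end{align*}
The middle and last terms are then rewritten using $a\,a'(g) = a^2 Q_a(g)/g$ and $a'(g)^2 g^2 = a^2 Q_a(g)^2$, and the identity $\nabla g^2 = 2g\nabla g$ together with $|\nabla g|^2 = |\nabla|\nabla\bfu||^2$.

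Next I would invoke the linear vectorial identity \eqref{linearvector} applied to $\bfu$, which expresses $a^2|\Delta\bfu|^2$ in terms of $a^2 |\nabla^2\bfu|^2$ plus $a^2\,{\rm div}\bigl((\Delta\bfu)^T\nabla\bfu - \tfrac12\nabla|\nabla\bfu|^2\bigr)$. The key algebraic step is to move the factor $a^2 = a(g)^2$ inside the divergence: $a^2\,{\rm div}(V) = {\rm div}(a^2 V) - \nabla(a^2)\cdot V$, where $V = (\Delta\bfu)^T\nabla\bfu - \tfrac12\nabla|\nabla\bfu|^2$. The correction term $-\nabla(a^2)\cdot V = -2a\,a'(g)\,\nabla g\cdot V$ must then cancel, together with the $2 a\,a'(g)\,(\Delta\bfu)^T\nabla\bfu\cdot\nabla g$ term coming from the square, down to an expression proportional to $|\nabla g|^2$. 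Concretely $\nabla g\cdot V = \nabla g\cdot(\Delta\bfu)^T\nabla\bfu - \tfrac12\nabla g\cdot\nabla|\nabla\bfu|^2 = \nabla g\cdot(\Delta\bfu)^T\nabla\bfu - g|\nabla g|^2$; feeding this back, the $(\Delta\bfu)^T\nabla\bfu\cdot\nabla g$ contributions cancel and one is left precisely with the $2Q_a(g)|\nabla g|^2$ term in the bracket. Collecting everything yields \eqref{pointwiseq}.

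For the regularity hypotheses: under \eqref{bC1}, i.e. $b(t)=a(t)t\in C^1([0,\infty))$, the vector field $a(|\nabla\bfu|)\nabla\bfu$ is $C^1$ up to and across $\{\nabla\bfu = 0\}$ (since $a(|\xi|)\xi = b(|\xi|)\xi/|\xi|$ extends $C^1$ in $\xi$), so the identity, valid in the open set $\{\nabla\bfu\neq0\}$, extends by continuity to all of $\Omega$, the bracketed terms being interpreted as $0$ on $\{\nabla\bfu=0\}$ — one checks that $|\nabla g|^2$ and $|\tfrac{\nabla\bfu}{g}(\nabla g)^T|^2$, multiplied by $a(g)^2 = b(g)^2/g^2$, tend to $0$ there using $i_a\geq-1$, which guarantees $b(t)^2/t^2 = a(t)^2$ does not blow up faster than is compensated. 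If $a$ is only defined and $C^1$ on $(0,\infty)$ with \eqref{positive} and \eqref{fundhp}, no extension is claimed and the identity simply holds on $\{\nabla\bfu\neq0\}$ verbatim, since the computation above never used values at $0$. The main obstacle I anticipate is purely bookkeeping: correctly handling the matrix/vector contractions — in particular writing $\bigl|\tfrac{\nabla\bfu}{g}(\nabla g)^T\bigr|^2$ (which equals $(\nabla g)^T\tfrac{(\nabla\bfu)^T\nabla\bfu}{g^2}\nabla g$) and verifying the cancellation of all $(\Delta\bfu)^T\nabla\bfu$ terms against the divergence-correction term without sign errors — together with the boundary behaviour across $\{\nabla\bfu=0\}$, where one must justify differentiability of $a(|\nabla\bfu|)\nabla\bfu$ and the vanishing of the remainder, both consequences of \eqref{bC1} and $i_a\geq-1$.
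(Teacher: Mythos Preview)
Your proposal is correct and follows essentially the same route as the paper: expand the square of $\mathbf{div}(a(|\nabla\bfu|)\nabla\bfu)$, use the linear identity \eqref{linearvector} to rewrite $a^2|\Delta\bfu|^2$, pull $a^2$ inside the divergence via the product rule, and observe that the correction term $-\nabla(a^2)\cdot V$ cancels the cross term leaving exactly $2a\,a'(g)\,g\,|\nabla g|^2 = 2a^2 Q_a(g)|\nabla g|^2$. The only minor difference is in the treatment at $\{\nabla\bfu=0\}$: the paper verifies the intermediate identity directly at those points (using $\lim_{t\to 0^+}a'(t)t=0$, a consequence of \eqref{bC1}) rather than arguing by continuous extension, but both justifications are valid.
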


The next corollary follows from Lemma \ref{peq}. applied with
 $a(t)=t^{p-2}$.

\begin{corollary}\label{plapleq} Let $n$, $N$, $\Omega$ and $\bfu$  be as in Theorem \ref{lemma1}.
Assume that $p\geq1$.
Then  
\begin{align}\label{pointwisepeq}
\big|{\rm {\bf div}} (|\nabla \bfu|^{p-2}\nabla \bfu )\big|^2 & =
{\rm{div}} \Big[|\nabla \bfu|^{2(p-2)}
\Big((\Delta \bfu)^T \nabla \bfu
- \tfrac 12    \nabla |\nabla \bfu|^2\Big)\Big]
\\ \nonumber & +
|\nabla \bfu|^{2(p-2)}  \Bigg[ |\nabla ^2 \bfu|^2 
 +2 (p-2)|\nabla |\nabla \bfu||^2 +(p-2)^2\bigg| \frac{\nabla \bfu}{|\nabla \bfu|} (\nabla |\nabla \bfu|)^T \bigg|^2\Bigg]
\end{align}
in $\{\nabla \bfu \neq 0\}$.  
\end{corollary}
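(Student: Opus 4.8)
The statement to prove is Corollary~\ref{plapleq}, which is simply the specialization of the identity \eqref{pointwiseq} from Lemma~\ref{peq} to the choice $a(t)=t^{p-2}$. The plan is therefore entirely mechanical once Lemma~\ref{peq} is in hand.

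First I would verify that $a(t)=t^{p-2}$ meets the hypotheses of the second (weaker) part of Lemma~\ref{peq}, namely that $a$ is positive on $(0,\infty)$, that $a\in C^1((0,\infty))$, and that $i_a\geq -1$: indeed $a'(t)=(p-2)t^{p-3}$, so $Q_a(t)=ta'(t)/a(t)=p-2$ identically, whence $i_a=s_a=p-2$, and the condition $i_a\geq-1$ is exactly $p\geq 1$, which is assumed. (One cannot invoke the first part of the Lemma directly, since $b(t)=a(t)t=t^{p-1}$ fails $b\in C^1([0,\infty))$ when $1<p<2$; this is precisely the point of Remark~\ref{ahat}, and it is why the conclusion is asserted only on $\{\nabla\bfu\neq 0\}$.)

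Next I would substitute $a(|\nabla\bfu|)=|\nabla\bfu|^{p-2}$ and $Q_a(|\nabla\bfu|)=p-2$ into \eqref{pointwiseq}. On the left-hand side this turns $\big|\mathbf{div}(a(|\nabla\bfu|)\nabla\bfu)\big|^2$ into $\big|\mathbf{div}(|\nabla\bfu|^{p-2}\nabla\bfu)\big|^2$. In the divergence term, $a(|\nabla\bfu|)^2=|\nabla\bfu|^{2(p-2)}$ multiplies the same vector field $(\Delta\bfu)^T\nabla\bfu-\tfrac12\nabla|\nabla\bfu|^2$, giving the first term on the right of \eqref{pointwisepeq}. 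Finally, in the bracketed remainder, $a(|\nabla\bfu|)^2=|\nabla\bfu|^{2(p-2)}$ factors out, and the three terms become $|\nabla^2\bfu|^2+2(p-2)\,\big|\nabla|\nabla\bfu|\big|^2+(p-2)^2\,\big|\tfrac{\nabla\bfu}{|\nabla\bfu|}(\nabla|\nabla\bfu|)^T\big|^2$, matching \eqref{pointwisepeq} exactly. Since everything is being evaluated on $\{\nabla\bfu\neq 0\}$, there is no ambiguity in the division by $|\nabla\bfu|$ and no need for the convention about vanishing gradient.

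There is essentially no obstacle here: the corollary is a direct evaluation, and the only thing worth flagging explicitly is the failure of \eqref{bC1} for $a(t)=t^{p-2}$ with $p<2$, which forces us into the weaker branch of Lemma~\ref{peq} and hence restricts the identity to the set where $\nabla\bfu$ does not vanish. I would write the proof in two sentences: one checking the hypotheses and computing $Q_a\equiv p-2$, one performing the substitution.
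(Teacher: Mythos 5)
Your proposal is exactly the paper's route: the paper states only that the corollary ``follows from Lemma \ref{peq} applied with $a(t)=t^{p-2}$,'' and you have merely filled in the routine details (checking $Q_a\equiv p-2$, noting $i_a=s_a=p-2\geq-1$ iff $p\geq1$, observing that failure of \eqref{bC1} for $1<p<2$ forces the second branch of the lemma and hence the restriction to $\{\nabla\bfu\neq0\}$). The argument is correct and matches the paper.
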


\begin{proof}[Proof of Lemma \ref{peq}] 
The following chain can be deduced via  straightforward computations:
\begin{align}\label{point1}
\big| {\rm {\bf div}}  \big(a(|\nabla \bfu|)\nabla \bfu \big)\big|^2  &  =  \big|
  a(|\nabla \bfu|) \Delta \bfu+   a'(|\nabla \bfu|) \nabla \bfu (\nabla |\nabla \bfu|)^T \big|^2 
\\ \nonumber & =  \  a(|\nabla \bfu|)^2 \big(|\Delta \bfu|^2
- |\nabla ^2 \bfu|^2\big) +  a(|\nabla \bfu|)^2   |\nabla ^2 \bfu|^2 +
 \\ \nonumber & \quad +   a'(|\nabla \bfu|)^2 | \nabla \bfu (\nabla |\nabla \bfu|)^T \big|^2 
+ 2
  a(|\nabla \bfu|)   a'(|\nabla \bfu|) \Delta \bfu \cdot \nabla \bfu (\nabla |\nabla \bfu|)^T
 \\ \nonumber & =  a(|\nabla \bfu|)^2\Big( {\rm {div}} (( \Delta \bfu)^T \nabla \bfu) -  \tfrac 12{\rm {div}} (\nabla |\nabla \bfu|^2)\Big)
%
%
%
+
  a(|\nabla \bfu|)^2   |\nabla ^2 \bfu|^2 +
 \\ \nonumber & \quad +   a'(|\nabla \bfu|)^2 \big| \nabla \bfu (\nabla |\nabla \bfu|)^T \big|^2 
+ 2
 a(|\nabla \bfu|)  a'(|\nabla \bfu|) \Delta \bfu \cdot \nabla \bfu (\nabla |\nabla \bfu|)^T.
\end{align}
Notice that equation \eqref{point1} also holds at the points where $|\nabla \bfu|=0$, 
provided  the terms  involving the factor $a'(|\nabla \bfu|)$   are intepreted as $0$.  This is due to the fact that all the terms in question also contain the factor $\nabla \bfu$ and,  
by assumption \eqref{bC1},  
$$\lim _{t\to 0^+} a'(t)t=0.$$
Moreover,
\begin{align}\label{pointsys1}
 a(|\nabla \bfu|)^2 {\rm {div}} ((\Delta \bfu)^T \nabla \bfu)
%
={\rm {div}}\big( a(|\nabla \bfu|)^2 (\Delta \bfu)^T\nabla \bfu\big)
%
- 
2
 a(|\nabla \bfu|)   a'(|\nabla \bfu|) \Delta \bfu \cdot \nabla \bfu (\nabla |\nabla \bfu|)^T,
%
%
\end{align}
and 
\begin{align}\label{pointsys2}
\tfrac 12   a(|\nabla \bfu|)^2    {\rm {div}}\big(\nabla |\nabla \bfu|^2\big)
%
%
 = 
 \tfrac 12{\rm {div}}\big(  a(|\nabla \bfu|)^2\,\nabla |\nabla \bfu|^2\big)
%
%
-   2a(|\nabla \bfu|)   a'(|\nabla \bfu|)|\nabla \bfu||\nabla |\nabla \bfu||^2.
\end{align}
From equations \eqref{point1}--\eqref{pointsys2} one deduces that
\begin{align}\label{pointsys3neq0}
\big|  {\rm {\bf div}}  (a(|\nabla \bfu|)\nabla \bfu )\big|^2 &= 
{\rm { div}}\big(a(|\nabla \bfu|)^2(\Delta \bfu)^T \nabla \bfu\big) -
\tfrac 12 {\rm {div}}\big(a(|\nabla \bfu|)^2\,\nabla |\nabla \bfu|^2\big)
\\ \nonumber & \quad
+
  a(|\nabla \bfu|)^2   |\nabla ^2 \bfu|^2 
 + a'(|\nabla \bfu|)^2 \big| \nabla \bfu (\nabla |\nabla \bfu|)^T \big|^2 
+    2a(|\nabla \bfu|)  a'(|\nabla \bfu|)|\nabla \bfu||\nabla |\nabla \bfu||^2.
%
%
%
\end{align}
If $\nabla \bfu =0$, then the last two addends on the right-hand side of equation \eqref{pointsys3neq0} vanish.   Hence,   equation \eqref{pointwiseq} follows. 
Assume next that $\nabla \bfu\neq 0$. 
Then,  from equation  \eqref{pointsys3neq0}  we obtain that
\begin{align*}
%
\big| {\rm {\bf div}}   (a(|\nabla \bfu|)\nabla \bfu )\big|^2
%
%
%
%
%
%
  & ={\rm {div}}\big(a(|\nabla \bfu|)^2 (\Delta \bfu)^T \nabla \bfu\big) -
\tfrac 12 {\rm {div}}\big(a(|\nabla \bfu|)^2\,\nabla |\nabla \bfu|^2\big)
%
%
%
%
%
%
%
\\ \nonumber & \quad \ + 
a(|\nabla \bfu|)^2  \Bigg[ |\nabla ^2 \bfu|^2 
+\bigg(\frac{a'(|\nabla \bfu|) |\nabla \bfu|}{a(|\nabla \bfu|)}\bigg)^2 \bigg| \frac{\nabla \bfu}{|\nabla \bfu|} (\nabla |\nabla \bfu|)^T \Bigg|^2 
 +2 \frac{a'(|\nabla \bfu|) |\nabla \bfu|}{a(|\nabla \bfu|)}|\nabla |\nabla \bfu||^2\Bigg].
\end{align*} 
The proof of equation \eqref{pointwiseq} is complete.
\end{proof}

Having identity \eqref{pointwiseq} at our disposal, the point is now to derive a sharp lower bound for the second addend on its right-hand side. This will be accomplished via Lemma \ref{cor:pmin} below. Its proof requires  a delicate analysis of the quadratic form depending on the entries of the Hessian matrix $\nabla^2 \bfu$ which appears  in square brackets in the expression to be bounded. This analysis relies upon some critical linear-algebraic steps that are presented in the next three lemmas. 
\\
In what follows,  $\setR^{n\times n}_{\sym}$ denotes the space of
symmetric matrices in $\setR^{n \times n}$. The dot $\lq\lq \, \cdot \, "$ is employed to denote  scalar product of vectors or matrices, and the symbol $\lq\lq \otimes "$  for tensor product of vectors. Also,   $I$ stands for the identity matrix in $\setR^{n \times n}$.

\begin{lemma}
  \label{lem:cianchi-mazya}
  Let $\omega \in \rn$ be such that $\abs{\omega} =1$. Then, 
  \begin{align}\label{may8}
    \abs{H\omega}^2-\tfrac 12 \abs{\omega \cdot H\omega}^2-\tfrac 12 \abs{H}^2
    =- \tfrac 12 \abs{H_{\omega ^\perp}}^2
  \end{align}
for every $H \in \setR^{n \times
    n}_{\sym}$,
  where $H_{\omega ^\perp} = (I - \omega \otimes \omega) H (I -
  \omega \otimes \omega)$.
\end{lemma}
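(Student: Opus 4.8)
The plan is to prove the identity \eqref{may8} by a direct coordinate-free expansion, exploiting the symmetry of $H$ and the fact that $P := I - \omega \otimes \omega$ is the orthogonal projection onto $\omega^\perp$. First I would record the elementary facts that $P$ is symmetric, $P^2 = P$, $P\omega = 0$, and $P = I - \omega\otimes\omega$, together with the decomposition of $\setR^n$ into $\mathrm{span}(\omega) \oplus \omega^\perp$. I would then split the action of $H$ on this decomposition by writing $H = (P + \omega\otimes\omega)H(P + \omega\otimes\omega)$ and expanding into four blocks: the $\omega^\perp\!\to\!\omega^\perp$ block $H_{\omega^\perp} = PHP$, the two "mixed" blocks involving $\omega\otimes\omega\,H P$ and $PH\,\omega\otimes\omega$, and the scalar block $(\omega\cdot H\omega)\,\omega\otimes\omega$. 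Because $H$ is symmetric, the two mixed blocks are transposes of one another, so each contributes the same Frobenius norm; writing $v := PH\omega \in \omega^\perp$, the mixed part has squared norm $2\abs{v}^2$.

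Next I would compute each of the three terms on the left-hand side of \eqref{may8} in terms of this block decomposition. For $\abs{H\omega}^2$: since $H\omega = PH\omega + (\omega\cdot H\omega)\,\omega = v + (\omega\cdot H\omega)\,\omega$ is an orthogonal decomposition, $\abs{H\omega}^2 = \abs{v}^2 + \abs{\omega\cdot H\omega}^2$. For $\abs{H}^2$, the Frobenius norm is additive over the four mutually orthogonal blocks, giving $\abs{H}^2 = \abs{H_{\omega^\perp}}^2 + 2\abs{v}^2 + \abs{\omega\cdot H\omega}^2$. The middle term $\tfrac12\abs{\omega\cdot H\omega}^2$ needs no rewriting. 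Substituting into $\abs{H\omega}^2 - \tfrac12\abs{\omega\cdot H\omega}^2 - \tfrac12\abs{H}^2$, the $\abs{v}^2$ contributions combine as $\abs{v}^2 - \abs{v}^2 = 0$, the scalar contributions combine as $\abs{\omega\cdot H\omega}^2 - \tfrac12\abs{\omega\cdot H\omega}^2 - \tfrac12\abs{\omega\cdot H\omega}^2 = 0$, and what survives is exactly $-\tfrac12\abs{H_{\omega^\perp}}^2$, as claimed.

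I do not expect a serious obstacle here: the statement is a finite-dimensional linear-algebra identity and the only point requiring care is the bookkeeping of the orthogonal block decomposition of $\setR^{n\times n}$ induced by the splitting $\setR^n = \mathrm{span}(\omega)\oplus\omega^\perp$, and in particular verifying that the "mixed" blocks are genuinely Frobenius-orthogonal to both the scalar block and the $PHP$ block (which follows since $\omega\otimes v$ and $v\otimes\omega$ are orthogonal to $\omega\otimes\omega$ and to any matrix of the form $PKP$ whenever $v\in\omega^\perp$). An alternative, perhaps cleaner, route is to choose an orthonormal basis with first vector $\omega$, so that $H$ has entries $H_{ij}$ with $H_{11} = \omega\cdot H\omega$, $\abs{H\omega}^2 = \sum_j H_{1j}^2$, $\abs{H}^2 = \sum_{i,j} H_{ij}^2$, and $\abs{H_{\omega^\perp}}^2 = \sum_{i,j\ge 2} H_{ij}^2$; then \eqref{may8} reduces to the scalar identity $\sum_j H_{1j}^2 - \tfrac12 H_{11}^2 - \tfrac12\sum_{i,j}H_{ij}^2 = -\tfrac12\sum_{i,j\ge2}H_{ij}^2$, which follows immediately by isolating the terms with an index equal to $1$ and using $H_{1j}=H_{j1}$. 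I would present the basis-dependent computation as the main argument for transparency, noting that the result is basis-independent since all quantities involved are.
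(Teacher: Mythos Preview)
Your proposal is correct, and the basis-dependent computation you describe as the ``alternative, perhaps cleaner, route'' is exactly the argument the paper gives: it conjugates $H$ by an orthogonal matrix sending $e_1$ to $\omega$ and then verifies the scalar identity $\sum_i r_{i1}^2 - \tfrac12 r_{11}^2 - \tfrac12\sum_{i,j} r_{ij}^2 = -\tfrac12\sum_{i,j\ge 2} r_{ij}^2$ using symmetry. Your coordinate-free block decomposition is a legitimate variant of the same computation, so either presentation is fine.
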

\begin{proof}
  Let $\{e_1, \dots, e_n\}$ denote the canonical basis in $\rn$ and let $\{\theta_1, \dots, \theta_n\}$ be an orthonormal basis
  of~$\rn$ such that $\theta_1 = \omega$. Let $Q \in  \setR^{n \times n}$ be the matrix whose columns are $\theta_1, \dots, \theta_n$. Hence,  $\omega = Q e_1$. Next, let 
  $R=  Q^T H Q$.  Clearly, $R \in \setR^{n \times n}_{\sym}$.  Denote by $r_{ij}$ the entries of $R$. Computations show that
  \begin{align*}
 {\abs{H\omega}^2-\tfrac 12 \abs{\omega \cdot H\omega}^2 -\tfrac  12
    \abs{H}^2 } 
    &
   = \abs{R e_1}^2-\tfrac 12
      \abs{e_1 \cdot R e_1}^2-\tfrac 12 \abs{R}^2
      = 
  \sum_{i=1}^n \abs{r_{i1}}^2-\tfrac 12
      \abs{r_{11}}^2-\tfrac 12 \sum_{i,j=1}^n\abs{r_{ij}}^2 
      \\&
      =\tfrac 12\sum_{j=1}^n
      \abs{r_{1j}}^2 + \tfrac 12 \sum_{i=1}^n \abs{r_{i1}}^2-\tfrac 12
      \abs{r_{11}}^2-\tfrac 12 \sum_{i,j=1}^n\abs{r_{ij}}^2
=-\tfrac 12 \sum_{i,j \ge 2}\abs{r_{ij}}^2
    \\
    &= -\tfrac 12 \abs{(I - e_1 \otimes e_1) R (I- e_1 \otimes e_1)}^2
   = -\tfrac 12 \abs{(I - \omega
      \otimes \omega) H (I - \omega
      \otimes \omega)}^2.
  \end{align*}
 Hence, equation \eqref{may8} follows.
\end{proof}

Given  a vector $\omega \in \rn$, define the set
\begin{align*}
  E(\omega) = \bigset{ H \omega\,:\, \text{$H \in \setR^{n \times
              n}_{\sym},
              \abs{H} \leq 1$}}.
\end{align*}
 It is easily verified that  $E(\omega)$ is a convex set in $\rn$ for every  $\omega \in \rn$. Lemma  \ref{lem:Homega-ellipsoid} below tells us that, in fact, $E(\omega)$ is an ellipsoid, centered at $0$ (which reduces to $\{0\}$ if $\omega =0$). This assertion will be verified by showing that, for each $\omega \in \rn$, there exists a positive definite matrix $W \in \setR^{n \times n}_{\sym}$ such that $E(\omega)$  agrees with the ellipsoid 
\begin{align}\label{may1}
  F(W) = \bigset{
         x\in \rn:\,  x\cdot W^{-1} x\leq 1},
\end{align}
where $W^{-1}$ stands for the inverse of $W$.
This is the content of Lemma \ref{lem:Homega-ellipsoid} below. In   its proof, we shall make use of the 
alternative representation
\begin{align}\label{may2}
  F(W) =
         \bigset{x\in \rn:\, y \cdot x \leq \sqrt{y \cdot Wy} \quad \text{for
         every $y \in \setR^n$}},
\end{align}
which follows, for instance, via a maximization argument for the ratio of the two sides of the inequality in \eqref{may2} for each given $x\in \rn$.
\\ Also, observe  that, as a consequence of equation \eqref{may2}, 
\begin{align}
  \label{eq:est-ellipsoid}
  \abs{x} =  x \cdot \widehat{x}
\leq \sqrt{ \widehat{x} \cdot W \widehat{x}}
                                   \qquad \text{for every  $x \in F(W)\setminus \{0\}$.} 
\end{align}
Here, and in what follows, we adopt the notation
$$\widehat x = \frac x{|x|} \qquad \hbox{for $x \in \rn\setminus \{0\}$.}$$

\begin{lemma}
  \label{lem:Homega-ellipsoid}
Given $\omega \in \rn$, let $W(\omega) \in  \setR^{n \times n}_{\sym}$ be defined as 
  \begin{align}\label{may3}
    W(\omega) = \tfrac 12 \big( \abs{\omega}^2 I + \omega
                \otimes \omega \big).
  \end{align}
Then $W(\omega)$ is positive definite, and 
  \begin{align}\label{may4}
    E(\omega) = F(W(\omega)).
  \end{align}
  In particular,
  \begin{align}\label{may5}
    H \omega \in \abs{H}\,F\big( W(\omega)\big) \quad \text{for every  $\omega \in \rn$ and  $H \in \setR^{n \times n}_{\sym}$.} 
  \end{align}
\end{lemma}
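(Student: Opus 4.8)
The plan is to show two inclusions: $E(\omega) \subseteq F(W(\omega))$ and $F(W(\omega)) \subseteq E(\omega)$, after disposing of the degenerate case $\omega = 0$ (where both sides reduce to $\{0\}$), and to verify positive definiteness of $W(\omega)$ separately. Positive definiteness is immediate: for $x \neq 0$ we have $x\cdot W(\omega)x = \tfrac12\big(|\omega|^2|x|^2 + (\omega\cdot x)^2\big) \geq \tfrac12 |\omega|^2 |x|^2 > 0$ when $\omega\neq0$. So the core is the identity \eqref{may4}.

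\textbf{First inclusion.} I would fix $H\in\setR^{n\times n}_{\sym}$ with $|H|\le 1$ and show $H\omega\in F(W(\omega))$ using the representation \eqref{may2}: it suffices to check that $y\cdot(H\omega)\le \sqrt{y\cdot W(\omega)y}$ for every $y\in\rn$. The key observation is that $y\cdot H\omega = \omega\cdot Hy$, and Lemma \ref{lem:cianchi-mazya} applied in a suitably normalized form controls precisely this bilinear quantity. Concretely, for a unit vector $\widehat\omega$, expanding $|H(\widehat\omega + ty)|^2 \le \ldots$ or, more cleanly, using \eqref{may8} with $H$ replaced by a symmetric combination, one gets $|\widehat\omega\cdot Hy|^2 \le |H|^2\big(|y|^2 - \tfrac12|y - (\widehat\omega\cdot y)\widehat\omega|^2\big)$ or an equivalent sharp bound; after rescaling by $|\omega|$ this reads $|y\cdot H\omega|^2 \le |H|^2\cdot\tfrac12\big(|\omega|^2|y|^2 + (\omega\cdot y)^2\big) = |H|^2\, y\cdot W(\omega)y$. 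With $|H|\le1$ this is exactly the inequality in \eqref{may2}, giving $H\omega\in F(W(\omega))$, hence $E(\omega)\subseteq F(W(\omega))$.

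\textbf{Second inclusion (the harder direction).} Given $x\in F(W(\omega))$, I must produce a symmetric $H$ with $|H|\le1$ and $H\omega = x$. The natural ansatz, suggested by the structure of $W(\omega)$, is to look for $H$ in the span of $\widehat\omega\otimes \widehat v + \widehat v\otimes\widehat\omega$ type terms: write $x = \alpha\,\widehat\omega + z$ with $z\perp\omega$, and try $H = \mu\,\widehat\omega\otimes\widehat\omega + (\widehat\omega\otimes w + w\otimes\widehat\omega)$ for a suitable $w\perp\omega$ and scalar $\mu$, chosen so that $H\omega = x$; this forces $\mu|\omega| = \alpha$ and $|\omega|w = z$. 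Then one computes $|H|^2 = \mu^2 + 2|w|^2 = \alpha^2/|\omega|^2 + 2|z|^2/|\omega|^2 = \tfrac{1}{|\omega|^2}\big(\alpha^2 + 2|z|^2\big)$, and one checks this equals $x\cdot W(\omega)^{-1}x$ by inverting $W(\omega)$ explicitly on the decomposition $\rn = \setR\widehat\omega \oplus \omega^\perp$ (on $\omega^\perp$, $W(\omega) = \tfrac12|\omega|^2 I$, so $W^{-1} = \tfrac2{|\omega|^2}I$ there; on $\widehat\omega$, $W(\omega)\widehat\omega = |\omega|^2\widehat\omega$, so $W^{-1}\widehat\omega = \tfrac1{|\omega|^2}\widehat\omega$). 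Thus $|H|^2 = x\cdot W(\omega)^{-1}x \le 1$, so $x\in E(\omega)$. I expect the main obstacle to be bookkeeping the $\alpha,z$ decomposition and matching the Frobenius norm of the ansatz to the quadratic form $x\cdot W(\omega)^{-1}x$ with the correct factors of $2$ and $|\omega|$; once that algebra is organized the inclusion is clean.

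Finally, \eqref{may5} is an immediate consequence: for general $H\ne 0$, apply \eqref{may4} to $H/|H|$ to get $H\omega/|H|\in E(\omega) = F(W(\omega))$, hence $H\omega\in |H|\,F(W(\omega))$; the case $H=0$ is trivial. I would also remark that \eqref{eq:est-ellipsoid} combined with \eqref{may5} then yields the pointwise bound $|H\omega| \le |H|\sqrt{\widehat{H\omega}\cdot W(\omega)\,\widehat{H\omega}}$, which is the form in which this lemma feeds into the proof of Lemma \ref{cor:pmin}.
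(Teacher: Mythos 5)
Your proposal is correct and follows essentially the same path as the paper's proof: the inclusion $E(\omega)\subset F(W(\omega))$ reduces to Lemma~\ref{lem:cianchi-mazya}, and the reverse inclusion is obtained by exhibiting an explicit rank-two symmetric ansatz $H$ with $H\omega = x$ and matching its Frobenius norm to $x\cdot W(\omega)^{-1}x$; your ansatz, after normalizing $|\omega|=1$, is exactly the paper's $H = t\,\omega\otimes\omega + s(\omega^\perp\otimes\omega + \omega\otimes\omega^\perp)$, and your block-diagonal inversion of $W(\omega)$ on $\setR\widehat\omega\oplus\omega^\perp$ gives the same value $t^2+2s^2$. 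The one spot where your sketch is looser than the paper's is the first inclusion: by routing through the dual description \eqref{may2} you need the bilinear estimate $|y\cdot H\omega|^2 \le |H|^2\, y\cdot W(\omega)y$ for all $y$, and your derivation of this from \eqref{may8} is left gestural (``expanding \ldots or, more cleanly, \ldots''); the paper avoids this by working with \eqref{may1} directly and the explicit inverse $W(\omega)^{-1}=2I-\omega\otimes\omega$ for unit $\omega$, so that the needed bound $H\omega\cdot W(\omega)^{-1}H\omega\le|H|^2$ is literally a restatement of \eqref{may8} and no polarization is required. If you do want the bilinear form, the cleanest route is to first prove $H\omega\cdot W(\omega)^{-1}H\omega\le|H|^2$ as in the paper and then apply Cauchy--Schwarz in the $W(\omega)^{-1}$ inner product, which is exactly the content of the equivalence between \eqref{may1} and \eqref{may2}.
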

\begin{proof} Equation \eqref{may4} trivially holds if $\omega =0$. Thus, 
  by a   scaling argument,   it  suffices to consider the
  case when $\abs{\omega}=1$.  
  We begin showing that  $E(\omega) \subset F(W(\omega))$. 
One can verify that,   since   $\abs{\omega}=1$, 
  \begin{align}\label{may6}
    W(\omega)^{-1} = 2I - \omega \otimes
             \omega.
  \end{align}
Let  $H \in \setR^{n \times n}_{\sym}$ be such that $\abs{H} \leq 1$. 
Owing to equation \eqref{may6} and to
  Lemma~\ref{lem:cianchi-mazya},  
  \begin{align}\label{may7}
    H\omega \cdot W(\omega)^{-1} H\omega
    = 2 \abs{H \omega}^2 -
      \bigabs{\omega \cdot H \omega}^2 \leq \abs{H}^2 \leq 1.
  \end{align}
  This shows that $H \omega \in F(W(\omega))$.  The
  inclusion $E(\omega) \subset F(W(\omega))$ is thus established .
\\ Let  us next prove that  $F(W(\omega)) \subset E(\omega)$. Let
  $x \in F(W(\omega))$. We have to detect a matrix ~$H\in  \setR^{n \times n}_{\sym}$ such that $\abs{H} \leq 1$
  and $x = H \omega$. To this purpose,  consider the decomposition
  $x=t \omega + s \omega^\perp$,  for suitable $s, t \in \setR$, where  $\omega^\perp\perp \omega$ and
  $\abs{\omega^\perp}=1$. Since $x \in F(W(\omega))$, one has that
  $x \cdot W(\omega) ^{-1} x \leq 1$. Furthermore,
  \begin{align*}
    x \cdot W(\omega)^{-1}x =(t \omega + s \omega^\perp) \cdot (2 I - \omega
                         \otimes \omega) (t \omega + s \omega^\perp)
   = 2(t^2 + s^2) - t^2 = t^2 + 2s^2.
  \end{align*}
  Hence, $t^2 + 2s^2 \leq 1$.  We claim that the matrix $H$ defined as
$H = t\,\omega \otimes \omega + s\,(\omega^\perp \otimes \omega +
  \omega \otimes \omega^\perp)$, has the desired properties. Indeed, 
$H \in  \setR^{n \times n}_{\sym}$,
  \begin{align*}
    \abs{H}^2 &= {\rm tr}(H^T H) = t^2 + 2s^2 \leq 1
\\
    H \omega &= t \omega + s \omega^\perp = x.
  \end{align*}
  This proves that $x \in E(\omega)$. The inclusion $F(W(\omega)) \subset
  E(\omega)$ hence follows. 
\end{proof}
In view of the statement of the next lemma, we introduce the following notation. Given $N$ vectors $\omega^\alpha \in \rn$ and $N$ matrices $H^\alpha \in  \setR^{n \times n}_{\sym}$, with $\alpha =1, \dots N$, we set
\begin{align}\label{may10}
    J  &= \biggabs{ \sum_{\alpha =1}^N H^\alpha \omega^\alpha}^2, &
    J_0 &= \sum_{\alpha =1}^N\biggabs{ \omega^\alpha \cdot  \sum_{\beta =1}^N H^\beta
      \omega^\beta}^2, &
    J_1  &= \sum_{\alpha =1}^N \abs{H^\alpha}^2.
  \end{align}

\begin{lemma}
  \label{thm:max-defab}
  Let $N \geq 2$, $0 \leq \delta \leq \frac 12$ and
  $\delta+\sigma \geq 1$.
  Assume that   the vectors $\omega^\alpha \in \rn$ and  the matrices $H^\alpha \in  \setR^{n \times n}_{\sym}$, with $\alpha =1, \dots N$, satisfy the following constraints:
  \begin{align}\label{may11}
\sum_{\alpha=1}^N \abs{\omega^\alpha }^2 \leq 1,
\end{align}
\begin{align}\label{may12}
    \sum_{\alpha=1}^N \abs{H^\alpha}^2 &\leq 1.
  \end{align}
 Then,
  \begin{align}\label{may13}
    J - \delta J_0 - \sigma J_1
    &\leq
      \begin{cases}
        0 &\qquad \text{if $\delta \in [0,\frac 13]$},
        \\
        \max \Bigset{0,\frac{(\delta+1)^2}{8\delta} - \sigma} &\qquad \text{if
          $\delta \in (\frac 13,\frac 12]$}.
      \end{cases}
  \end{align}
\end{lemma}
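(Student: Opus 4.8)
The strategy is to reduce the $N$-vector problem to a one-dimensional optimization by exploiting Lemma~\ref{lem:Homega-ellipsoid}, and then to optimize the resulting scalar quadratic form. First I would bound $J$ from above: since $\sum_\alpha H^\alpha\omega^\alpha$ is a sum of vectors $H^\alpha\omega^\alpha \in \abs{H^\alpha}\,F(W(\omega^\alpha))$ by \eqref{may5}, and using \eqref{eq:est-ellipsoid} together with the explicit form $W(\omega)=\tfrac12(\abs{\omega}^2 I+\omega\otimes\omega)$ from \eqref{may3}, I would estimate $\abs{\sum_\alpha H^\alpha\omega^\alpha}$ by taking $\widehat x$ to be the unit vector in the direction of the sum and writing $\abs{\sum_\alpha H^\alpha\omega^\alpha} = \sum_\alpha H^\alpha\omega^\alpha\cdot\widehat x \le \sum_\alpha \abs{H^\alpha}\sqrt{\widehat x\cdot W(\omega^\alpha)\widehat x}$. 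With $\nu_\alpha := \widehat x\cdot\omega^\alpha$ one gets $\widehat x\cdot W(\omega^\alpha)\widehat x = \tfrac12(\abs{\omega^\alpha}^2+\nu_\alpha^2)$, so by Cauchy--Schwarz $J \le \big(\sum_\alpha\abs{H^\alpha}^2\big)\big(\tfrac12\sum_\alpha(\abs{\omega^\alpha}^2+\nu_\alpha^2)\big) \le J_1\cdot\tfrac12(1+\tau)$ where $\tau := \sum_\alpha\nu_\alpha^2 = \abs{\sum_\alpha\nu_\alpha\omega^\alpha - \text{(parts $\perp\widehat x$)}}$-type quantity; more precisely $\tau\in[0,1]$ by \eqref{may11} since the $\nu_\alpha$ are the components of the $\omega^\alpha$ along one fixed direction. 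Here I used $J_1\le 1$ from \eqref{may12}.

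Next I would handle $J_0$ from below to get a useful \emph{lower} bound subtracting into $-\delta J_0$. Set $w := \sum_\beta H^\beta\omega^\beta$; then $J = \abs{w}^2$ and $J_0 = \sum_\alpha\abs{\omega^\alpha\cdot w}^2$. Writing $\widehat w = w/\abs w$ and $\mu_\alpha := \omega^\alpha\cdot\widehat w$, we have $J_0 = \abs{w}^2\sum_\alpha\mu_\alpha^2 = J\cdot m$ with $m := \sum_\alpha\mu_\alpha^2 \le 1$ by \eqref{may11}. So $J - \delta J_0 = J(1-\delta m)$, and since $\delta\le\tfrac12<1$ and $m\le1$ this is nonnegative but we want it small. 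Combining with the $J$-estimate: taking $\widehat x=\widehat w$ in the first part (which is the natural choice, and is exactly the direction realizing $\abs w$), we get $J \le J_1\cdot\tfrac12(1+m)$ since then $\nu_\alpha=\mu_\alpha$ and $\tau=m$. Hence
\begin{align*}
J - \delta J_0 - \sigma J_1 \le J_1\Big(\tfrac12(1+m)(1-\delta m) - \sigma\Big),
\end{align*}
using $J(1-\delta m)\le J_1\tfrac12(1+m)(1-\delta m)$, valid because $1-\delta m\ge0$. Since $0\le J_1\le1$, the whole expression is $\le \max\{0,\ \tfrac12(1+m)(1-\delta m)-\sigma\}$.

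Finally I would maximize the scalar function $g(m) := \tfrac12(1+m)(1-\delta m)$ over $m\in[0,1]$. This is a downward parabola in $m$ (coefficient $-\delta/2\le0$) with vertex at $m^* = \tfrac{1-\delta}{2\delta}$. If $m^*\ge1$, i.e. $\delta\le\tfrac13$, then $g$ is increasing on $[0,1]$ so $\max g = g(1) = \tfrac12\cdot2\cdot(1-\delta) = 1-\delta \le 1 \le \delta+\sigma$, hence $g(m)-\sigma \le 1-\delta-\sigma\le0$, giving the first branch of \eqref{may13}. If $m^*\in[0,1)$, i.e. $\delta\in(\tfrac13,\tfrac12]$, then $\max g = g(m^*)$; a direct computation gives $g(m^*) = \tfrac12\cdot\tfrac{\delta+1}{2\delta}\cdot\tfrac{\delta+1}{2} = \tfrac{(\delta+1)^2}{8\delta}$, yielding the bound $\max\{0,\tfrac{(\delta+1)^2}{8\delta}-\sigma\}$ as claimed. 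The main obstacle is the first step: getting the sharp constant $\tfrac12(1+m)$ in the bound for $J$ requires recognizing that the ellipsoid $F(W(\omega^\alpha))$ from Lemma~\ref{lem:Homega-ellipsoid} is the \emph{exact} range of $H^\alpha\omega^\alpha$, and that aligning the evaluation direction $\widehat x$ with $\widehat w$ is simultaneously optimal for both the upper bound on $J$ and the computation of $J_0$ — a suboptimal choice would lose the sharpness. One should also double-check the degenerate case $w=0$ (then $J=J_0=0$ and the inequality is trivial) and that the constraints $\delta\le\tfrac12$, $\delta+\sigma\ge1$ are used exactly where indicated.
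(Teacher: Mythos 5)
Your proof is correct and reaches the same intermediate bound as the paper's, but by a genuinely shorter route. The paper squares the sum $\abs{\zeta} \le \sum_\alpha \abs{H^\alpha}\abs{\omega^\alpha}\sqrt{\tfrac12+\tfrac12\abs{\widehat{\omega^\alpha}\cdot\widehat\zeta}^2}$ but then leaves it as a general sum and maximizes an auxiliary function $g(h,s,t)$ of $3N$ variables via a Lagrange-multiplier argument (after first normalizing $J_1=1$ and separately showing that the constraint $\sum t_\alpha^2\le 1$ must be active at the maximum). You instead apply Cauchy--Schwarz directly to that sum, which yields
\begin{align*}
J \;\le\; J_1\cdot\tfrac12\Big(\textstyle\sum_\alpha\abs{\omega^\alpha}^2 + \sum_\alpha(\omega^\alpha\cdot\widehat w)^2\Big) \;\le\; J_1\cdot\tfrac12(1+m),
\end{align*}
and this reproduces in one line exactly the bound $\tfrac12(1+\rho)$ that the paper obtains at the critical point of the Lagrange system (the Cauchy--Schwarz equality case $\abs{H^\alpha}\propto\abs{\omega^\alpha}\sqrt{1+s_\alpha^2}$ is the paper's optimality condition \eqref{may25}). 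You also avoid the $J_1=1$ normalization by keeping $J_1$ explicitly, which is harmless since $J_1\le 1$ lets you pass to $\max\{0,\cdot\}$ at the end. The rest of your argument --- the identities $J_0 = J\,m$ with $m = \sum_\alpha\abs{\omega^\alpha\cdot\widehat w}^2 \le 1$, the combination using $1-\delta m\ge 0$, and the elementary maximization of $\psi(m)=\tfrac12(1+m)(1-\delta m)$ over $[0,1]$ split at $m^*=\tfrac{1-\delta}{2\delta}$ --- matches the paper's final step exactly. The only blemish is the garbled parenthetical explanation of why $\tau\le 1$; the clean reason is simply that $\tau=\sum_\alpha(\widehat w\cdot\omega^\alpha)^2\le\sum_\alpha\abs{\omega^\alpha}^2\le 1$. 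And you correctly flag the degenerate case $w=0$. In short: same skeleton as the paper, but Cauchy--Schwarz replaces the Lagrange-multiplier machinery, making the proof substantially more transparent.
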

\begin{proof}
 Given $\delta$ and $\sigma$ as in the statement, set
  \begin{align*}
    \mathcal{D}_{\delta,\sigma} = J - \delta J_0 - \sigma J_1.
  \end{align*}
  The quantities $J_0$, $J$ and $J_1$ are 1-homogeneous
  with respect to the quantity $\sum_{j=1}^N \abs{H_j}^2$. Moreover,   inequality  \eqref{may13} trivially holds if the latter quantity vanishes. Thereby, it suffices to prove this inequality under the assumption that $ \sum_{j=1}^N \abs{H_j}^2=1$, namely that
  \begin{align}\label{may14}
    J_1 = 1.
  \end{align}
On setting $\zeta = \sum_{\alpha=1}^N H^\alpha \omega^\alpha$, one has that
%
  \begin{align*}
    J = \abs{\zeta}^2  \qquad \text{and} \qquad
    J_0 = \sum_{\alpha=1}^N \abs{ \omega^\alpha \cdot \zeta}^2.
  \end{align*}
  Therefore,
  \begin{align}\label{may15}
    J_0 \leq \abs{\zeta}^2 \sum_{\alpha=1}^N \abs{\omega^\alpha}^2  \leq \abs{\zeta}^2 = J.
  \end{align}
  Owing to Lemma~\ref{lem:Homega-ellipsoid},  
    $$
H^\alpha \omega^\alpha \in \abs{H^\alpha} 
                   F(W^\alpha) 
 $$
for $\alpha =1, \dots,  N$,
  where $W^\alpha = \abs{\omega^\alpha}^2 \frac 12 (\identity +
  \widehat{\omega^\alpha} \otimes \widehat{\omega^\alpha})$.
Thus, by equations \eqref{may5} and \eqref{eq:est-ellipsoid},
  \begin{align}\label{may17}
 H^\alpha \omega^\alpha \cdot \widehat{\zeta}
    &\leq \abs{H^\alpha } \sqrt{\widehat{\zeta}\cdot W^\alpha 
      \widehat{\zeta}} = \abs{H^\alpha }  \abs{\omega^\alpha} \sqrt{ \tfrac 12 +
      \tfrac 12 \abs{\widehat{\omega^\alpha} \cdot \widehat{\zeta}}^2}
  \end{align}
for $\alpha =1, \dots,  N$.
  Since
  \begin{align*}
    \zeta &= ( \zeta \cdot \widehat{\zeta})\widehat{\zeta} = \sum_{\alpha =1}^N
            (H^\alpha \omega^\alpha \cdot \widehat{\zeta}) \widehat \zeta, 
  \end{align*}
  equation \eqref{may17} implies that
  \begin{align*}
    \abs{\zeta} &\leq \sum_{\alpha =1}^N \bigabs{H^\alpha \omega^\alpha \cdot \widehat{\zeta}}
                  \leq \sum_{\alpha=1}^N \abs{H^\alpha }  \abs{\omega^\alpha} \sqrt{ \tfrac 12 +
      \tfrac 12 \abs{\widehat{\omega^\alpha} \cdot \widehat{\zeta}}^2}.
  \end{align*}
  Hence,
  \begin{align}\label{may18}
    \abs{\zeta}^2 &\leq \tfrac 12 
                    \bigg(\sum_{\alpha=1}^N \abs{H^\alpha }  \abs{\omega^\alpha} \sqrt{ \tfrac 12 +
      \tfrac 12 \abs{\widehat{\omega^\alpha} \cdot \widehat{\zeta}}^2}     \bigg)^2. 
  \end{align}
 On setting $ \widehat{J_0}
    = \sum_{\alpha =1}^N \abs{\smash{\omega^\alpha \cdot \widehat{\zeta}}}^2$, we obtain that
  \begin{align*}
    \widehat{J_0} 
      = \sum_{\alpha=1}^N \abs{\omega^\alpha}^2 \abs{\smash{\widehat{\omega^\alpha}\cdot\widehat{\zeta}}}^2 \qquad \text{and} \qquad
    J_0 = \abs{\zeta}^2 \widehat{J_0}.
  \end{align*}
 Note that   $\widehat{J_0} \leq 1$, inasmuch as $J_0 \leq J= \abs{\zeta}^2$.
  Moreover, by equation \eqref{may14},
  \begin{align}\label{may19}
    \mathcal{D}_{\delta,\sigma} &= J - \delta J_0 - \sigma= \abs{\zeta}^2
                  \big(1 - \delta
                  \widehat{J_0}\big) - \sigma.
  \end{align}
 From inequalities \eqref{may18} and \eqref{may19} we deduce  that
  \begin{align}\label{may20}
    \mathcal{D}_{\delta,\sigma}
    &\leq \tfrac 12 
                    \bigg(\sum_{\alpha=1}^N \abs{H^\alpha }  \abs{\omega^\alpha} \sqrt{ \tfrac 12 +
      \tfrac 12 \abs{\widehat{\omega^\alpha} \cdot \widehat{\zeta}}^2}     \bigg)^2
  \Big(1 - \delta \sum_{\alpha=1}^N  \abs{\omega^\alpha}^2 \abs{\smash{\widehat{\omega^\alpha}\cdot\widehat{\zeta}}}^2
      \Big) - \sigma.
  \end{align}
Next, define the function 
  with $g\,:\ [0,1]^N \times [0,1]^N \times [0,1]^N \to \setR$ as
  \begin{align}
    \label{eq:def-g}
    \begin{aligned}
      g(h,s,t) 
      & = \tfrac 12 \Big( \sum_{\alpha=1}^N h_\alpha t_\alpha\sqrt{1 +
        s_\alpha^2}  \Big)^2  \bigg(1 - \delta
      \Big( \sum_{\alpha=1}^N t_\alpha^2 s_\alpha^2 \Big) \bigg) - \sigma 
      \\
     & \quad \text{for $(h,s,t) \in [0,1]^N \times [0,1]^N \times [0,1]^N$,}
    \end{aligned}
  \end{align}
where $h=(h_1, \dots , h_N)$, $s=(s_1, \dots, s_N)$ and $t=(t_1, \dots, t_N)$. Inequality \eqref{may20} then takes the form
  \begin{align*}
    \mathcal{D}_{\delta,\sigma}
    &\leq g((|H^1|, \dots, |H^N|), (|\omega^1|, \dots , |\omega^N|), (|\abs{\widehat{\omega^1} \cdot \widehat{\zeta}}, \dots , \abs{\widehat{\omega^N} \cdot \widehat{\zeta}})).
  \end{align*}
Our purpose is now to
 maximize the function $g$   under the  constraints
  \begin{align}\label{may21}
    \sum_{\alpha=1}^N t_\alpha^2
    & \leq 1,
    &
    \sum_{\alpha=1}^N h_\alpha^2 &= 1.
  \end{align}
We claim that the maximum of $g$ can   only be attained if
  $\sum_{\alpha=1}^N t_\alpha^2 = 1$.  To verify this claim, it suffices to show that
  \begin{align}
    \label{eq:g-hom-t}
    g(h, s, 
    \tau t) &\leq g(h,s,t) \qquad \text{for every $(h,s,t) \in [0,1]^N \times [0,1]^N \times [0,1]^N$ and $\tau \in [0,1]$}.
  \end{align}
 Plainly,
  \begin{align*}
    g(h,s,\tau t)
    =  \tfrac 12 \tau ^2\Big( \sum_{\alpha=1}^N h_\alpha t_\alpha\sqrt{1 +
        s_\alpha^2}  \Big)^2  \bigg(1 - \tau ^2\delta
      \Big( \sum_{\alpha=1}^N t_\alpha^2 s_\alpha^2 \Big) \bigg) - \sigma
%
%
%
  \end{align*}
for  $(h,s,t) \in [0,1]^N \times [0,1]^N \times [0,1]^N$ and $\tau \in [0,1]$.
  Note that
  \begin{align}
    \label{eq:1}
    0 &\leq \delta      \Big( \sum_{\alpha=1}^N t_\alpha^2 s_\alpha^2 \Big) 
    \leq \delta       \Big( \sum_{\alpha=1}^N t_\alpha^2  \Big)  = \delta \leq
        \tfrac 12.
  \end{align}
  Thus, for each fixed $(h,s,t) \in [0,1]^n \times [0,1]^n \times [0,1]^n$, we have that
  \begin{align}\label{may23}
    g(h, s, \tau t) = c_1 \tau (1- c_2 \tau) - \beta  \qquad \text{for $\tau \in [0,1]$,}
  \end{align}
for suitable constants 
              $c_1 \geq 0$ and
              $0\leq c_2
              \leq \tfrac 12$, depending on  $(h,s,t)$.   Since the polynomial on the right-hand side of equation \eqref{may23} is increasing for $\tau \in [0,1]$,
inequality \eqref{eq:g-hom-t} follows.
As a consequence, constraints \eqref{may21} can be equivalently replaced by
  \begin{align}\label{may24}
    \sum_{\alpha=1}^N t_\alpha ^2 &=1 \qquad \text{and} \qquad    \sum_{\alpha=1}^N h_\alpha^2 =1.
  \end{align}
  Let us maximize the function $g(h,s,t)$ with respect to~$h$, under the
  constraint $\sum_{\alpha=1}^N h_\alpha^2 = 1$.  Let $(h_1, \dots , h_N)$ be any point where the maximum is attained. Then,   there exists  a
  Langrange multiplier~$\lambda \in \setR$ such that 
  \begin{align}
    \label{eq:lagr-hj}
   t_\alpha
    \sqrt{1  + s_\alpha^2}   \bigg(  \sum_{\gamma=1}^N h_\gamma t_\gamma
    \sqrt{1  + s_\gamma^2}
    \bigg) \bigg(1 - \delta \Big( \sum_{\gamma=1}^N t_\gamma^2 s_\gamma^2 \Big)
    \bigg) &= 2\lambda h_\alpha \quad \hbox{for $\alpha =1, \dots, N$.}
  \end{align}
  Multiplying through equation ~\eqref{eq:lagr-hj} by $h_\beta$, and then
  subtracting  equation ~\eqref{eq:lagr-hj}, with $\alpha$ replaced by~$\beta$, multiplied 
  by~$h_\alpha$ yield 
  \begin{align}
    \label{eq:3}
  \bigg(  \sum_{\gamma=1}^N h_\gamma t_\gamma
    \sqrt{1  + s_\gamma^2}
    \bigg)  \bigg(1 - \delta \Big( \sum_{\gamma=1}^n t_\gamma^2 s_\gamma^2 \Big) \bigg)
    \Big(
    h_\beta t_\alpha
    \sqrt{1  + s_\alpha^2}  -     h_\alpha t_\beta
    \sqrt{1  + s_\beta^2}  \Big) &= 0
  \end{align}
for ~$\alpha, \beta = 1,\dots, N$. Owing to
 equation \eqref{eq:1}, we have that $\big(1 - \delta \big( \sum_{\gamma=1}^n t_\gamma^2 s_\gamma^2 \big) \big) \geq \frac 12$. Next, if $ \sum_{\gamma=1}^N h_\gamma t_\gamma
    \sqrt{1  + s_\gamma^2}=0$, 
then $h_1t_1 = \dots = h_N t_N = 0$, whence  $\mathcal{D}_{\delta,\sigma} = -\sigma \leq 0$, and inequality \eqref{may13} holds trivially.
Therefore, 
  we may assume that $ \sum_{\gamma=1}^N h_\gamma t_\gamma
    \sqrt{1  + s_\gamma^2} > 0$ in what follows. Under this assumption, 
  equation \eqref{eq:3} tells us that  
  \begin{align}
    \label{eq:2}
      h_\beta t_\alpha
    \sqrt{1  + s_\alpha^2}  &=  h_\alpha t_\beta
    \sqrt{1  + s_\beta^2}
  \end{align}
for ~$\alpha, \beta = 1,\dots, N$. Combining equations \eqref{may24} and \eqref{eq:2}
  yields
  \begin{align}
    \label{eq:4}
    \begin{aligned}
      t_\alpha^2(1+s_\alpha^2) &= t_\alpha^2(1+s_\alpha^2) \sum_{\beta=1}^N h_\beta^2
      =
      h_\alpha^2 \sum_{\beta =1}^N t_\beta^2(1+s_\beta^2) = h_\alpha^2 \bigg( 1 + \sum_{\beta=1}^N
      t_\beta^2s_\beta^2  \bigg)
    \end{aligned}
  \end{align}
for ~$\alpha = 1,\dots, N$.
  Hence,
  \begin{align}\label{may25}
      h_\alpha t_\alpha \sqrt{1+s_\alpha ^2}
                     &= h_\alpha^2 \sqrt{1 + \sum_{\beta=1}^N t_\beta ^2s_\beta^2}
  \end{align}
for $\alpha = 1,\dots, N$. From
equations \eqref{eq:def-g}, \eqref{may25} and \eqref{may24} we deduce that
  \begin{align*}
    g(h,s,t) &\leq \tfrac 12 \left( \sum_{\alpha=1}^N h_\alpha^2
               \sqrt{1+\sum_{\beta=1}^N t_\beta^2s_\beta^2}   \right)^2  \bigg(1 - \delta
               \Big( \sum_{\alpha =1}^N t_\alpha^2 s_\alpha^2 \Big) \bigg) - \sigma
    \\
             &= \tfrac 12 \bigg( 1+\sum_{\beta=1}^N t_\beta^2s_\beta^2 \bigg)   \bigg(1 - \delta
               \Big( \sum_{\alpha=1}^N t_\alpha^2 s_\alpha^2 \Big) \bigg) - \sigma
    = \psi\bigg( \sum_{\alpha=1}^N t_\alpha^2 s_\alpha^2 \bigg),
  \end{align*}
  where $\psi : [0,1] \to \mathbb R$ is the function defined as
  \begin{align*}
    \psi(r) =  \tfrac 12 (1 +r)
              \big(1
              - \delta r \big) - \sigma \quad \text{for $r \in \mathbb R$.}
  \end{align*}
  Set $\rho = \sum_{j=\alpha}^N t_\alpha^2 s_\alpha^2$, and notice that $\rho \in [0,1]$, since $0 \leq
  \sum_{\alpha=1}^N t_\alpha^2 s_\alpha^2 \leq \sum_{\alpha=1}^N t_\alpha^2=1$.
  Thereby, the maximum of the function $g$ on $[0,1]^N \times [0,1]^N \times [0,1]^N$ under constraints \eqref{may24} agrees with the 
 maximum of the function $\psi$ on $[0,1]$. It is easily verified that, if  $\delta \in [0, \frac 13]$, then $\max_{r \in [0,1]}\psi (r) = \psi (1)$. Hence, since we are assuming that $\delta + \sigma \geq 1$,
\begin{align*}
      \mathcal{D}_{\delta,\sigma} \leq \psi(1)  =
                                     1-\delta-\sigma \leq 0.
    \end{align*}
On the other hand, if  $\delta \in (\frac 13, \frac 12]$, then $\max_{r \in [0,1]}\psi (r) = \psi (\frac{1-\delta}{2\delta})$. Therefore, 
\begin{align*}
      \mathcal{D}_{\delta,\sigma}
      &\leq     \psi\Big( \frac{1-\delta}{2\delta} \Big)
        = \frac{(\delta+1)^2}{8\delta} - \sigma. 
    \end{align*}
The proof of inequality \eqref{may13} is complete.
\end{proof}

\begin{lemma}
  \label{cor:pmin} Let $n$, $N$, $\Omega$ and $\bfu$ be as in Theorem \ref{lemma1}.
  Given $p\geq 1$,
let $\kappa _N (p)$ be the constant defined by \eqref{kappa1}--\eqref{cp}. Then
  \begin{align}\label{eq:est_gamma}
    \abs{\nabla^2 \bfu}^2
    +  2(p-2)\bigabs{\nabla \abs{\nabla \bfu}}^2
    +  (p-2)^2 \biggabs{ \frac{\nabla \bfu}{\abs{\nabla \bfu}}
    (\nabla \abs{\nabla \bfu})^T}^2 &\geq \kappa_N (p)
                                   \abs{\nabla^2 \bfu}^2
  \end{align}
in $\{\nabla \bfu \neq 0\}$. Moreover, the constant $\kappa_N (p)$  is sharp in \eqref{eq:est_gamma}.
\end{lemma}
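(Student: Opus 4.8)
The plan is to recast \eqref{eq:est_gamma} as a pointwise algebraic inequality for the Hessian tensor and then feed it into the linear-algebraic results already at our disposal. Fix a point where $\nabla \bfu \neq 0$, set $H^\alpha = \nabla^2 u^\alpha \in \setR^{n\times n}_{\sym}$ and $\omega^\alpha = \nabla u^\alpha / \abs{\nabla \bfu}$ for $\alpha = 1, \dots, N$, so that $\sum_{\alpha=1}^N \abs{\omega^\alpha}^2 = 1$, and note that, by symmetry of $H^\alpha$, $\nabla \abs{\nabla \bfu} = \sum_{\alpha=1}^N H^\alpha \omega^\alpha$. With the notation of \eqref{may10} one then has $\abs{\nabla^2 \bfu}^2 = J_1$, $\abs{\nabla \abs{\nabla \bfu}}^2 = J$ and $\bigabs{\tfrac{\nabla \bfu}{\abs{\nabla \bfu}} (\nabla \abs{\nabla \bfu})^T}^2 = J_0$, so that \eqref{eq:est_gamma} becomes precisely
\begin{align*}
  J_1 + 2(p-2) J + (p-2)^2 J_0 \ \geq\ \kappa_N(p)\, J_1 .
\end{align*}
Since $J$, $J_0$, $J_1$ are $1$-homogeneous in $\sum_\alpha \abs{H^\alpha}^2$, and the inequality is trivial when this quantity vanishes, we may assume $J_1 = 1$; together with $\sum_\alpha \abs{\omega^\alpha}^2 = 1$ this puts us exactly in the setting of Lemma~\ref{thm:max-defab}.

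If $p \geq 2$, then $p-2 \geq 0$, $\kappa_N(p) = 1$, and the inequality follows at once from $J, J_0 \geq 0$. Assume $1 \leq p < 2$, put $q = 2-p \in (0,1]$ and $\delta = q/2 \in (0, \tfrac 12]$; the claim becomes $J - \delta J_0 - \sigma J_1 \leq 0$ with $\sigma = \tfrac{1-\kappa_N(p)}{2q}$. For $N = 1$ there is a single pair $(\omega, H)$ with $\abs{\omega} = 1$, and Lemma~\ref{lem:cianchi-mazya} gives $J = \abs{H\omega}^2 \leq \tfrac 12(\abs{\omega \cdot H\omega}^2 + \abs H^2) = \tfrac 12(J_0 + J_1)$; since $0 \leq J_0 \leq J_1$, it follows that $J - \delta J_0 - (1-\delta) J_1 \leq (\tfrac 12 - \delta)(J_0 - J_1) \leq 0$, because $\tfrac 12 - \delta = \tfrac{p-1}2 \geq 0$, which is the desired bound with $\sigma = 1-\delta$, i.e. $\kappa_1(p) = (p-1)^2$. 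For $N \geq 2$ we apply Lemma~\ref{thm:max-defab} with this $\delta$ and the smallest admissible $\sigma$: namely $\sigma = 1-\delta$ when $\delta \in (0, \tfrac 13]$ (that is $p \in [\tfrac 43, 2)$), and $\sigma = \tfrac{(\delta+1)^2}{8\delta}$ when $\delta \in (\tfrac 13, \tfrac 12]$ (that is $p \in [1, \tfrac 43)$); the latter choice is legitimate since $(3\delta-1)^2 \geq 0$ gives $\tfrac{(\delta+1)^2}{8\delta} \geq 1-\delta$ on $(\tfrac 13, \tfrac 12]$, so that both $\delta + \sigma \geq 1$ and $\tfrac{(\delta+1)^2}{8\delta} - \sigma \leq 0$ hold and Lemma~\ref{thm:max-defab} yields $J - \delta J_0 - \sigma J_1 \leq 0$. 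Unwinding $\kappa_N(p) = 1 - 2q\sigma$ with $\delta = q/2$ reproduces exactly the two branches of \eqref{cp} with $p<2$: $\kappa_N(p) = (p-1)^2$ for $p \in [\tfrac 43, 2)$ and $\kappa_N(p) = 1 - \tfrac 18 (4-p)^2$ for $p \in [1, \tfrac 43)$.

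It remains to prove that $\kappa_N(p)$ is sharp, which is the main point. As both sides of \eqref{eq:est_gamma} at a given point depend only on $\nabla \bfu$ and $\nabla^2 \bfu$ there, any prescribed $\omega^\alpha \in \rn$ with $\sum_\alpha \abs{\omega^\alpha}^2 = 1$ and $H^\alpha \in \setR^{n\times n}_{\sym}$ are attained at the origin by the vector of quadratic polynomials $u^\alpha(x) = \omega^\alpha \cdot x + \tfrac 12 x^T H^\alpha x \in C^\infty(\rn)$, for which $\nabla \bfu(0) \neq 0$; it thus suffices to exhibit, for each admissible $p$, a configuration turning the chain above into a chain of equalities. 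For $p \geq 2$ this is elementary: with $\bfu = (u^1, 0, \dots, 0)$ and $u^1(x) = x_1 + \tfrac 12 x_2^2$ one has $\nabla \abs{\nabla \bfu}(0) = 0$ while $\nabla^2 \bfu \neq 0$. When $N \geq 2$ and $1 \leq p < 2$ one instead traces the equality cases of Lemmas~\ref{lem:cianchi-mazya}--\ref{thm:max-defab}, with $\zeta = \sum_\alpha H^\alpha \omega^\alpha$: all $H^\alpha \omega^\alpha \cdot \widehat{\zeta} \geq 0$; equality in \eqref{eq:est-ellipsoid} for each $H^\alpha \omega^\alpha$, which forces $H^\alpha \omega^\alpha$ to point along $W^\alpha \widehat{\zeta}$ with $W^\alpha$ as in Lemma~\ref{lem:Homega-ellipsoid}; and the value $\sum_\alpha \abs{\omega^\alpha \cdot \widehat{\zeta}}^2$ equal to the maximizer of the function $\psi$ from the proof of Lemma~\ref{thm:max-defab}, namely $1$ if $p \in [\tfrac 43, 2)$ and $\tfrac{1-\delta}{2\delta} < 1$ if $p \in [1, \tfrac 43)$. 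In the first range this forces every nonzero $\omega^\alpha$ to be parallel to $\zeta$, and a rank-one choice $H^\alpha = c_\alpha\, e \otimes e$, $\omega^\alpha = \mu_\alpha e$ (with $e$ a common unit vector and $\sum_\alpha \mu_\alpha^2 = \sum_\alpha c_\alpha^2 = 1$, $\mu_\alpha = c_\alpha$) already achieves equality; in the second range it requires at least two components whose gradients are not aligned with $\zeta$, and producing such a configuration explicitly — a two-component, two-dimensional one suffices — while checking that no $\kappa > \kappa_N(p)$ survives is the only genuinely delicate step. This is also where the reduced constant $1 - \tfrac 18 (4-p)^2$ and the threshold $p = \tfrac 43$ come from.
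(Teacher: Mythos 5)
Your proof of the inequality itself follows exactly the paper's route: identifying $J_1=\abs{\nabla^2 \bfu}^2$, $J=\abs{\nabla\abs{\nabla\bfu}}^2$, $J_0=\bigabs{\tfrac{\nabla\bfu}{\abs{\nabla\bfu}}(\nabla\abs{\nabla\bfu})^T}^2$ via symmetry of the Hessians, normalizing $J_1=1$, then invoking Lemma~\ref{lem:cianchi-mazya} for $N=1$ and Lemma~\ref{thm:max-defab} with the same $\delta=\tfrac{2-p}{2}$ and the same two-branch choice of $\sigma$ for $N\geq 2$. That part is correct and essentially identical to the paper's. Your sharpness constructions for $p\geq 2$ (gradient field with $\nabla\abs{\nabla\bfu}=0$ but $\nabla^2\bfu\neq 0$) and for $p\in[\tfrac43,2)$ (rank-one, gradients all parallel to a common $e$) are also valid, and the latter is the natural multi-component generalization of the paper's single-component example $\bfu=(\tfrac12 x_1^2,0,\dots,0)$.

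However, there is a genuine gap: for $p\in[1,\tfrac43)$, you stop at asserting that a two-component, two-dimensional extremal configuration exists and state that producing it \lq\lq is the only genuinely delicate step.'' But that step is precisely where the non-trivial constant $1-\tfrac18(4-p)^2$ and the threshold $p=\tfrac43$ enter, and it must actually be carried out for the sharpness claim to be proved. The paper exhibits it explicitly: with $r_0=\tfrac{p}{2(2-p)}$ (which is exactly the maximizer $\tfrac{1-\delta}{2\delta}$ of the function $\psi$ from Lemma~\ref{thm:max-defab}), it takes $\omega^1=\sqrt{r_0}\,e_1$, $\omega^2=\sqrt{1-r_0}\,e_2$, $H^1=\sqrt{\tfrac{2r_0}{1+r_0}}\,e_1\otimes e_1$, $H^2=\sqrt{\tfrac{1-r_0}{1+r_0}}\,\tfrac1{\sqrt2}(e_1\otimes e_2+e_2\otimes e_1)$, all other components zero, and verifies directly that $J_1=1$, $J=\tfrac{1+r_0}{2}$, $J_0=\tfrac{r_0(1+r_0)}{2}$, hence $J_1+2(p-2)J+(p-2)^2J_0=1-\tfrac18(4-p)^2$. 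Realizing this through a degree-two polynomial yields equality in \eqref{eq:est_gamma} at the origin. Tracing the equality cases of Lemmas~\ref{lem:cianchi-mazya}--\ref{thm:max-defab}, as you sketch, does tell you what such a configuration must look like, but without exhibiting one and checking the value, the optimality of $\kappa_N(p)$ on $[1,\tfrac43)$ remains unproved.
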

\begin{proof} \emph{Case $N=1$.} Inequality \eqref{eq:est_gamma} trivially holds if $p\geq 2$.  Let us focus on the case when $1\leq p < 2$.
Notice that, on setting
$$\omega = \frac{(\nabla u)^T}{|\nabla u|}\in \rn \quad \text{and} \quad H= \nabla ^2 u \in \setR^{n\times n}_{\rm sym}$$
 at any point  in $\{\nabla u \neq 0\}$, we have that
\begin{equation*}
\abs{H\omega}^2= \bigabs{\nabla \abs{\nabla u}}^2,  \quad \abs{\omega \cdot H\omega}^2= \biggabs{ \frac{\nabla u}{\abs{\nabla u}}
    (\nabla \abs{\nabla u})^T}^2,  \quad\abs{H}^2=  \abs{\nabla^2 u}^2.
\end{equation*}
Therefore, by  equation \eqref{may8},  
  \begin{align*}
    \bigabs{\nabla \abs{\nabla u}}^2
    &\leq 
      \tfrac 12 \biggabs{ \frac{\nabla u}{\abs{\nabla u}}
      (\nabla \abs{\nabla u})^T}^2 + \tfrac 12 \abs{\nabla^2 u}^2.
  \end{align*}
Consequently, the following chain holds:
  \begin{align*}
    \lefteqn{\abs{\nabla^2 u}^2
    +  2(p-2)\bigabs{\nabla \abs{\nabla u}}^2
    +  (p-2)^2 \biggabs{ \frac{\nabla u}{\abs{\nabla u}}
    (\nabla \abs{\nabla u})^T}^2} \qquad
    &
    \\
    &
\geq \big (1 + (p-2) \big)
      \abs{\nabla^2 u}^2
      +  \big( (p-2) + (p-2)^2\big) \biggabs{ \frac{\nabla u}{\abs{\nabla u}}
      (\nabla \abs{\nabla u})^T}^2 
    \\
    &\geq (p-1)
      \abs{\nabla^2 u}^2
      +  (p-1)(p-2) \biggabs{ \frac{\nabla u}{\abs{\nabla u}}
      (\nabla \abs{\nabla u})^T}^2 
    \\
    &\geq \big((p-1) + (p-1)(p-2)\big)
      \abs{\nabla^2 u}^2
    \\
    &= (p-1)^2
      \abs{\nabla^2 u}^2.
  \end{align*}
  Hence, inequality \eqref{eq:est_gamma} follows.
\\ As far as the sharpness of the constant is concerned, 
if $p\geq 2$,  consider the function  $u: \setR^n\setminus \set{0} \to \setR$ given by
$$u (x)= |x|  \quad \text{for $x \in \setR^n\setminus \set{0} $.}$$ 
Since  $\nabla \abs{\nabla u} = 0$,   equality holds in \eqref{eq:est_gamma}  for every $x \in \setR^n\setminus \set{0}$.
On the other hand, if $p \in [1, 2)$, consider the function $u: \setR^n\to \setR$ defined as 
$$ u (x)= \tfrac 12 x_1^2  \quad \text{for $x \in \setR^n$.}$$ 
One has that
$$ \abs{\nabla^2 u}^2
    =\bigabs{\nabla \abs{\nabla u}}^2
   = \biggabs{ \frac{\nabla u}{\abs{\nabla u}}
    (\nabla \abs{\nabla u})^T}^2 =1 \quad \text{in $\rn$.}$$
Hence, equality  holds in \eqref{eq:est_gamma} for every $x \in \setR^n\setminus \set{0}$.

\smallskip
\par\noindent
\emph{Case  $N \geq 2$.}
It suffices to prove that inequality \eqref{eq:est_gamma} holds at every point $x\in \{\nabla \bfu \neq 0\}$ under the assumption that $\abs{\nabla^2 \bfu(x)}$  equals either $0$ or $1$. Indeed, if $\abs{\nabla^2 \bfu(x)}\neq 0$ at some  point $x$, then the function given by $\overline \bfu   = \frac{\bfu }{\abs{\nabla^2 \bfu(x)}}$ fulfills $\abs{\nabla^2 \overline \bfu(x)}=1$. Hence, inequality \eqref{eq:est_gamma} for $\bfu$ at the point $x$ follows from the same inequality applied to $\overline u$. 
%
 \\ If~$p\geq 2$, inequality \eqref{eq:est_gamma}  holds trivially.  Thus, we may focus on the case when  $p \in [1,2)$.
In this case, we make use of  Lemma~\ref{thm:max-defab}.
Define
$$\omega^\alpha = \frac{\nabla \bfu^\alpha}{|\nabla \bfu|}\in \rn \quad \text{and} \quad H^\alpha= \nabla ^2 \bfu^\alpha \in \setR^{n\times n}_{\rm sym}$$
for $\alpha =1, \dots , N$, at any point  in $\{\nabla \bfu \neq 0\}$. In particular, assumptions \eqref{may11} and \eqref{may12} are satisfied with this choice. 
Computations show that
\begin{equation}\label{may33}
J= \bigabs{\nabla \abs{\nabla \bfu}}^2, \quad J_0= \biggabs{ \frac{\nabla \bfu}{\abs{\nabla \bfu}}
    (\nabla \abs{\nabla \bfu})^T}^2, \quad J_1 =  \abs{\nabla^2 \bfu}^2,
\end{equation}
where $J$, $J_0$ and $J_1$ are defined as in \eqref{may10}.
\\ 
  Next, let~$\delta= \frac{2-p}{2}$. Notice  that $\delta \in   [0, \frac 12]$, and that
$\delta  \in(0, \frac 13]$ if and only if $p \in [\frac 43,2)$.  We next choose $\sigma = \frac p2$ if $p \in [\frac 43,2)$, and $\sigma= \frac{(\delta+1)^2}{8\delta}= \frac 1{16} \frac{(4-p)^2}{2-p}$ if $p\in [1, \frac 43)$. Observe that $\delta +\sigma =1$ in the former case, and 
$\delta +\sigma > 1$ in the latter.  
Thus,   the assumptions on $\delta$ and $\sigma$ of  Theorem~\ref{thm:max-defab} are fulfilled. Furthermore, by our choice of $\sigma$, the maximum on right-hand side of inequality \eqref{may13} equals $0$ when $\delta  >\frac 13$, namely when $p\in  [1, \frac 43)$.
 From inequality \eqref{may13} we infer that
\begin{align*}
    J &\leq \tfrac{2-p}2 J_0 + \sigma J_1.
  \end{align*}
This inequality is equivalent to
$$J_1 + 2(p-2)J + (p-2)^2 J_0 \geq (1- \sigma 2(2-p)) J_1.$$
Since $1- \sigma 2(2-p) = \mathcal K(p)$, inequality \eqref{eq:est_gamma} follows.
\\ In order to prove the sharpness of the constant $\mathcal K(p)$, let us distinguish the cases when $p\geq 2$, $p \in [\frac 43, 2)$ and $p\in [1, \frac 43)$.
\\ If $p\geq 2$, 
consider the function  $\bfu: \setR^n\setminus \set{0} \to \setR^N$ given by
$$ \bfu (x)= (|x|, 0, \dots ,0) \quad \text{for $x \in \setR^n\setminus \set{0} $.}$$ 
Since  $\nabla \abs{\nabla \bfu} = 0$,   equality holds in \eqref{eq:est_gamma} for every $x \in \setR^n\setminus \set{0}$.
\\ If $p \in [\frac 43, 2)$, 
consider the function $\bfu: \setR^n\to \setR^N$ defined as 
$$ \bfu (x)= (\tfrac 12 x_1^2, 0, \dots ,0) \quad \text{for $x \in \setR^n$.}$$ 
One has that
$$ \abs{\nabla^2 \bfu}^2
    =\bigabs{\nabla \abs{\nabla \bfu}}^2
   = \biggabs{ \frac{\nabla \bfu}{\abs{\nabla \bfu}}
    (\nabla \abs{\nabla \bfu})^T}^2 =1 \quad \text{in $\rn$.}$$
Thus, equality  holds in \eqref{eq:est_gamma} for every $x \in \setR^n\setminus \set{0}$.
\\ If  $p \in [1, \frac 43)$, 
    set $r_0 = \frac p{2(2-p)}$. 
    Let $e_1, e_2$ denote the first two  vectors of the canonical base of $\rn$. Define
    \begin{align*}
      t_1 &= \sqrt{r_0}, &\qquad \omega^1 &= t_1 e_1,
      \\
      t_2 &= \sqrt{1-r_0}, &\qquad \omega^2 &= t_2 e_2,
      \\
      h_1 &= \sqrt{\frac{2r_0}{1+r_0}}, &\qquad H^1 &= h_1 e_1 \otimes e_1,
      \\
      h_2 &= \sqrt{\frac{1-r_0}{1+r_0}}, &\qquad H^2 &= h_2
                                                        \tfrac{1}{\sqrt{2}}
                                                        \big( e_1
                                                        \otimes e_2 +
                                                        e_2 \otimes e_1\big),
    \end{align*}
    and  $\omega^3 = \dots = \omega^N = 0$,  $H^3 = \dots = H^N = 0$.
    Then
    \begin{align}\label{may32}
      \sum_{\alpha=1}^N \abs{\omega^\alpha}^2 = \abs{\omega^1}^2 + \abs{\omega^2}^2 = 1.
\end{align}
Moreover,
    \begin{align}\label{may34}
      J_1 & =   \sum_{\alpha=1}^N \abs{H^\alpha}^2 =  \abs{H^1}^2 + \abs{H^2}^2= 1,
\\   \label{may35} J &=   \Biggabs{\sum_{\alpha=1}^N   H^\alpha \omega^\alpha }^2
= \abs{H^1 \omega^1 + H^2 \omega^2}^2 = \bigg|\Big( h_1 t_1 +
              \frac{1}{\sqrt{2}} h_2 t_2 \Big) e_1\bigg|^2 = \bigg|\sqrt{\frac{1+r_0}{2}} e_1\bigg|^2
                           =\frac{1+r_0}{2},
\\   \label{may36} J_0 & =  \sum_{\alpha=1}^N \abs{\ \omega^\alpha \cdot (H^1 \omega^1 + H^2 \omega^2)}^2
      = \bigabs{ \omega^ 1\cdot (H^1 \omega^1 + H^2 \omega^2) }^2 = \frac{r_0(1+r_0)}{2}.
    \end{align}
Now, let $\bfu : \rn \to \rN$ be a   polynomial of degree two such that $\nabla u^\alpha (0)^T = \omega^\alpha$ and
$\nabla^2u^\alpha = H^\alpha$ for $\alpha =1, \dots N$. Formulas \eqref{may33}, combined with \eqref{may34}--\eqref{may36}, tell us that
$$  \abs{\nabla^2 \bfu}^2
    +  2(p-2)\bigabs{\nabla \abs{\nabla \bfu}}^2
    +  (p-2)^2 \biggabs{ \frac{\nabla \bfu}{\abs{\nabla \bfu}}
    (\nabla \abs{\nabla \bfu})^T}^2 = 1- \tfrac{1}8(4-p)^2 = \kappa _N (p)
                                   \abs{\nabla^2 \bfu}^2 \quad \hbox{at $0$.}$$
Hence, equality holds in \eqref{eq:est_gamma} for   $x =0$. 
\end{proof}

We are now in a position to prove Theorem \ref{lemma1}.

\begin{proof}[Proof of Theorem \ref{lemma1}]
By Lemma \ref{cor:pmin}, applied with $p= Q_a(|\nabla \bfu|) +2$,
and  the monotonicity of the function $\kappa _N$ one has that
\begin{multline}\label{dec200}
a(|\nabla \bfu|)^2\Bigg[ |\nabla ^2 \bfu|^2 
 +2 Q_a(|\nabla \bfu|)|\nabla |\nabla \bfu||^2 +Q_a(|\nabla \bfu|)^2\bigg| \frac{\nabla \bfu}{|\nabla \bfu|} (\nabla |\nabla \bfu|)^T \bigg|^2 \Bigg]
  \\ \geq  \kappa_N \big(Q_a(|\nabla \bfu|)+2\big)
a(|\nabla \bfu|)^2  |\nabla ^2 \bfu|^2 \geq \kappa_N \big(i_a+2\big)
a(|\nabla \bfu|)^2  |\nabla ^2 \bfu|^2 \quad \text{in $\{\nabla \bfu \neq 0\}$.}
\end{multline}
Inequality \eqref{pointwise} holds at every point in the set  $\{\nabla \bfu \neq 0\}$, owing to equation \eqref{pointwiseq} and inequality \eqref{dec200}. It also trivially holds at every point in the set  $\{\nabla \bfu = 0\}$, since $\kappa_N \big(i_a+2\big)\leq 1$.
%
%
\\ In order to verify the optimality of the constant $\kappa _N(i_a+2)$ in inequality  \eqref{pointwise},
pick a function $\overline \bfu$ and a point $x_0$ from the  proof of Lemma \ref{cor:pmin} such that $\nabla \overline \bfu(x_0)\neq0$ and equality holds in inequality
 \eqref{eq:est_gamma} with $\bfu=\overline \bfu$ and $p=i_a +2$ at the point $x_0$. Namely,
\begin{align}\label{dec201}
 \abs{\nabla^2 \overline\bfu(x_0)}^2
    +  2i_a\bigabs{\nabla \abs{\nabla \overline\bfu}(x_0)}^2
    +  i_a^2 \biggabs{ \frac{\nabla \overline\bfu(x_0)}{\abs{\nabla \overline\bfu(x_0)}}
    (\nabla \abs{\nabla \overline\bfu}(x_0))^T}^2 = \kappa_N (i_a+2)
                                   \abs{\nabla^2 \overline\bfu(x_0)}^2.
\end{align}
By  the   the definition of the index $i_a$, given $\ep>0$ there exists $t_0 \in (0, \infty)$ such that  
\begin{align}\label{dec202}
  i_a \leq Q_a(t_0)  \leq  i_a + \ep.
\end{align}
Define , the function $\bfu = \frac{t_0 \overline\bfu}{|\nabla \overline\bfu(x_0)|}$, so that $|\nabla\bfu (0)|= t_0$.  From identity \eqref{pointwiseq}, equation \eqref{dec201} and inequality \eqref{dec202} we obtain that
\begin{align}\label{dec203}
&\frac{\big|{\rm {\bf div}} (a(|\nabla \bfu|)\nabla \bfu )\big|^2 - {\rm {div}} \Big[a(|\nabla \bfu|)^2  \Big(
(\Delta \bfu)^T \nabla \bfu
- \tfrac 12   \nabla |\nabla \bfu|^2\Big)\Big]}{a(|\nabla \bfu|)^{2}  |\nabla ^2 \bfu|^2 }\Bigg|_{x=x_0}
\\ \nonumber & \qquad = \frac{|\nabla ^2 \bfu(x_0)|^2 
 +2 Q_a(t_0)|\nabla |\nabla \bfu|(x_0)|^2 +Q_a(t_0)^2\bigg| \frac{\nabla \bfu(x_0)}{|\nabla \bfu(x_0)|} (\nabla |\nabla \bfu|(x_0))^T \bigg|^2}{ |\nabla ^2 \bfu(x_0)|^2 }
\\ \nonumber & \qquad = \frac{|\nabla ^2 \overline\bfu(x_0)|^2 
 +2 Q_a(t_0)|\nabla |\nabla \overline\bfu|(x_0)|^2 +Q_a(t_0)^2\bigg| \frac{\nabla\overline\bfu(x_0)}{|\nabla \overline\bfu(x_0)|} (\nabla |\nabla \overline\bfu|(x_0))^T \bigg|^2}{ |\nabla ^2 \overline\bfu(x_0)|^2 }
\\ \nonumber & \qquad \leq \frac{|\nabla ^2 \overline\bfu(x_0)|^2 
 +2 ( i_a + \ep)|\nabla |\nabla \overline\bfu|(x_0)|^2 + (i_a^2 + 2\ep |i_a|+ \ep^2))\bigg| \frac{\nabla \overline\bfu(x_0)}{|\nabla \overline\bfu(x_0)|} (\nabla |\nabla \overline\bfu|(x_0))^T \bigg|^2}{ |\nabla ^2 \overline\bfu(x_0)|^2 }
\\ \nonumber & \qquad = k_N(i_a+2)+ \frac{2 \ep|\nabla |\nabla \overline\bfu|(x_0)|^2 + (2\ep |i_a|+ \ep^2))\bigg| \frac{\nabla\overline\bfu(x_0)}{|\nabla \overline \bfu(x_0)|} (\nabla |\nabla \overline\bfu|(x_0))^T \bigg|^2}{ |\nabla ^2 \overline\bfu(x_0)|^2 }
\end{align}
Hence, the optimality   of the constant $\kappa _N(i_a+2)$ in inequality  \eqref{pointwise} follows, owing to the arbitrariness of $\ep$.
\end{proof}

\section{Function spaces}

An appropriate functional framework for the analysis of  solutions  to systems of the general form \eqref{system-a} is provided by the Orlicz-Sobolev spaces associated with the energy integral appearing in the functional \eqref{functional}. They consist in a generalization of the classical Sobolev spaces, where the role of powers in the definition of the norm is played by more general Young functions. Subsection \ref{Youngfunct} is devoted to  some basic definitions and properties of Young functions and  of Orlicz-Sobolev spaces. A  Poincar\'e type inequality for functions in these spaces of use for our purposes is established as well. In Subsection \ref{YoungB} we collect specific properties of the Young function (and of  perturbations of its) for the specific Orlicz-Sobolev ambient space associated with system  \eqref{localeq}.

\subsection{Young functions and Orlicz-Sobolev spaces}\label{Youngfunct}

   A Young  function $A : [0, \infty ) \to [0, \infty ]$ is
a convex function such that $A (0)=0$. 
The Young conjugate
of a Young function $A$ is the Young function $\widetilde A$ defined
as
\begin{equation*}
\widetilde A (t) = \sup \{st - A (s): s \geq 0\} \qquad \hbox{for $t
\geq 0$.}
\end{equation*}
A Young function (and, more generally, an increasing function) $A$
is said to belong to the class $\Delta _2$, or to satisfy the $\Delta_2$-condition, if there exists a
constant $c>1$ such that
\begin{equation}\label{delta2}
A (2t ) \leq c A (t) \qquad \hbox{for $t>0$.}
\end{equation}
Let   $i_A$ and $s_A$ be the indices associated with a continuously differentiable function $A$ as in \eqref{ia}, with $a$ replaced by $A$. Namely
\begin{equation}\label{indicesB}
i_A= \inf _{t >0} \frac{t A'(t)}{A(t)} \qquad \hbox{and} \qquad
s_A= \sup _{t >0} \frac{t A'(t)}{A(t)}.
\end{equation}
One has that $A \in \Delta _2$ if and only if $s_A<\infty$.  The constant $c$ in inequality \eqref{delta2} depends on $s_a$. Also, $\widetilde A \in \Delta _2$ if and only if $i_A>1$.

\medskip
\par
The Orlicz space $L^{A}(\Omega)$ is the Banach function space of those
real-valued  measurable functions $u : \Omega :\to \mathbb R$   whose Luxemburg
norm
\begin{equation*}
\|u\|_{L^A (\Omega)} = \inf \bigg\{\lambda >0: \int _\Omega A \bigg(\frac
{|u|}\lambda \bigg) \,d x \leq 1\bigg\}
\end{equation*}
is finite. The Orlicz space  $L^{A}(\Omega , \rN)$ of $\rN$-valued functions and the Orlicz space  $L^{A}(\Omega , \mathbb R^{N\times n})$ of $\mathbb R^{N\times n}$-valued functions are defined analogously.
\\ 
 The Orlicz-Sobolev space $W^{1, A
}(\Omega)$ is the Banach space
\begin{equation}
W^{1, A}(\Omega) =
 \{u \in L^A (\Omega):
\hbox{$u$ is   weakly differentiable in $\Omega$ and $\nabla u \in L^A (\Omega,  \rn)$} \}\,, \end{equation}
 and is equipped with the norm
$$\|u\|_{W^{1,A} (\Omega)} = \|u\|_{L^{A} (\Omega)}+ \|\nabla u\|_{L^A (\Omega,  \rn )}.$$
  The space $W^{1, A}_{\rm loc} (\Omega)$ is defined accordingly. By $W^{1,A} _0(\Omega)$ we denote the subspace  of $W^{1,A} (\Omega)$ of those functions in $W^{1,A} (\Omega)$ whose extension by $0$ outside $\Omega$ is weakly differentiable in the whole of $\rn$. The notation $(W^{1,A}_0 (\Omega))'$  stands for the dual of   $W^{1,A} _0(\Omega)$. 
If $\Omega$ has finite Lebesgue measure $|\Omega|$, then the functional $ \|\nabla u\|_{L^A (\Omega, \rn )}$ defines a norm in $W^{1,A} _0(\Omega )$ equivalent to $\|u\|_{W^{1,A} (\Omega)}$. 
\\ The space $C^\infty_0(\Omega)$ is dense in $W^{1,A} _0(\Omega)$  if $A\in \Delta_2$.  Moreover, $W^{1,A} _0(\Omega)$ is reflexive if   both $A\in \Delta_2$ and $ \widetilde A \in \Delta_2$, and hence if   $i_A>1$ and $s_A<\infty$.
\\The Orlicz-Sobolev space $W^{1,A} (\Omega,  \rN )$ of $\rN$-valued  functions,  its variants $W^{1,A}_{\rm loc} (\Omega,  \rN )$  and $W^{1,A}_0 (\Omega,  \rN )$, and  the space  $(W^{1,A}_0 (\Omega,  \rN ))'$ are defined analogously.
\par
If $|\Omega|<\infty$ and the Young function $A\in \Delta_2$, then the Poincar\'e type inequality
\begin{equation}\label{poinc}
\int_\Omega A(|u|)\, dx \leq c \int_\Omega A(|\nabla u|)\, dx 
\end{equation}
holds for some constant $c=c(n, |\Omega|, s_a)$ and for every function $u \in W^{1,A}_0(\Omega)$. Inequality \eqref{poinc} follows, for instance, from \cite[Lemma 3]{Talenti}.

In order to bound lower-order terms appearing in our global estimate, we also need a stronger, yet non-optimal, Sobolev-Poincar\'e type inequality for functions in $W^{1,A}_0(\Omega)$ with an Orlicz target space smaller than $L^A(\Omega)$.  This is the subject of Theorem \ref{sobolev} below, which generalizes  a version of the relevant inequality with optimal  Orlicz target space from  \cite{Ci_CPDE} (see also \cite{Ci_IUMJ} for an equivalent form). 
\\ Assume that the Young function $A$ and the number  $\sigma >1$  satisfy the conditions
\begin{equation}\label{conv0sig}
\int _0 \bigg(\frac t{A(t)}\bigg)^{\frac 1{\sigma -1}}\,dt < \infty
\end{equation}
and 
\begin{equation}\label{divinfsig}
\int ^\infty \bigg(\frac t{A(t)}\bigg)^{\frac 1{\sigma -1}}\,dt = \infty.
\end{equation}
Then, we define the function
$H_\sigma : [0, \infty) \to [0, \infty)$   as 
\begin{equation}\label{Hsig}
H_\sigma (s) = \bigg(\int _0^s \bigg(\frac t{A(t)}\bigg)^{\frac 1{\sigma-1}}\,dt \bigg)^{\frac 1{\sigma '}} \qquad \hbox{for $s \geq 0$,}
\end{equation}
and the Young function  $A_\sigma $ as
\begin{equation}\label{Bsig}
A_\sigma (t) = A(H_\sigma ^{-1}(t)) \qquad \hbox{for $t \geq 0$.}
\end{equation}

\begin{theorem}\label{sobolev}
Let $\Omega$ be an open set in $\rn$ with $|\Omega|<\infty$. Assume that the Young function $A$  and  the number $\sigma\geq n$ fulfill conditions \eqref{conv0sig} and \eqref{divinfsig}.
Then, there exists a  constant  $c=c(n, \sigma)$ such that
\begin{equation}\label{may95}
\int _\Omega A_\sigma \Bigg(\frac{|u(x)|}{c|\Omega|^{\frac 1n - \frac 1\sigma}\big(\int _\Omega  A(|\nabla  u |)dy\big)^{1/\sigma}}\Bigg)\, dx \leq  \int _\Omega A(|\nabla u|)dx
\end{equation}
for every $u \in W^{1,A}_0(\Omega)$.
\end{theorem}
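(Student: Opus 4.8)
The plan is to reduce the Orlicz-Sobolev inequality \eqref{may95} to a one-dimensional estimate via symmetric rearrangement, exploiting the P\'olya–Szeg\H{o} principle and an isoperimetric argument, in the spirit of Talenti's approach adapted to the Orlicz setting as in \cite{Ci_CPDE}. First I would introduce the decreasing rearrangement $u^*$ of $u$ on $(0,|\Omega|)$ and recall that, for $u \in W^{1,A}_0(\Omega)$, the Orlicz–P\'olya–Szeg\H{o} inequality gives $\int_{(0,|\Omega|)} A(|(u^*)'(s)|\,n\omega_n^{1/n} s^{1/n'})\,ds \le \int_\Omega A(|\nabla u|)\,dx$, where $\omega_n$ is the measure of the unit ball; this follows by combining the coarea formula, the isoperimetric inequality $\mathcal H^{n-1}(\partial^*\{|u|>t\}) \ge n\omega_n^{1/n}|\{|u|>t\}|^{1/n'}$, Jensen's inequality (using the convexity of $A$), and the fact that $A\in\Delta_2$ together with conditions \eqref{conv0sig}–\eqref{divinfsig} guarantee the relevant quantities are finite. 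Since $\sigma \ge n$, I would then replace the exponent $1/n'$ by the smaller one $1/\sigma'$ at the cost of a factor involving $|\Omega|^{1/n-1/\sigma}$, reducing matters to estimating $u^*$ in terms of $\big(\int_0^{|\Omega|} A(|(u^*)'(s)|\,c|\Omega|^{1/n-1/\sigma} s^{1/\sigma'})\,ds\big)$.

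The core step is then the purely one-dimensional claim: if $\phi:(0,L)\to[0,\infty)$ is nonincreasing, locally absolutely continuous, with $\phi(L)=0$, and $\int_0^L A(|\phi'(s)|\,s^{1/\sigma'})\,ds \le 1$, then $\int_0^L A_\sigma(\phi(s)/c)\,ds \le 1$ for a suitable $c=c(n,\sigma)$. I would prove this by writing $\phi(s) = \int_s^L |\phi'(r)|\,dr = \int_s^L \big(|\phi'(r)| r^{1/\sigma'}\big) r^{-1/\sigma'}\,dr$ and applying the Hardy-type / H\"older argument that underlies the definition of $H_\sigma$: by the generalized H\"older inequality in Orlicz spaces (pairing $A$ with its conjugate $\widetilde A$, or more directly by Young's inequality $ab \le A(a) + \widetilde A(b)$ applied with an appropriate normalization), one controls $\phi(s)$ from above by $H_\sigma(s)$ times a constant depending on $\int_0^L A(|\phi'(r)| r^{1/\sigma'})\,dr$. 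Precisely, the definition \eqref{Hsig} of $H_\sigma$ is tailored so that $\big(\int_s^L r^{-1/(\sigma-1)}\,\cdot\,\text{(weight from $A$)}\,dr\big)$ reproduces $H_\sigma(s)$; tracking constants and using that $\phi$ is nonincreasing so that $\phi(s)$ is comparable to its average on $(0,s)$ yields $A(\phi(s)/c) \le A(H_\sigma^{-1}(\text{something}))$, and then a change of variables $t = H_\sigma^{-1}$ in the integral $\int_0^L A_\sigma(\phi(s)/c)\,ds$, using $A_\sigma = A\circ H_\sigma^{-1}$ from \eqref{Bsig}, closes the estimate.

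Finally I would undo the normalization: the general case follows from the case $\int_\Omega A(|\nabla u|)\,dx = 1$ by the homogeneity built into the left-hand side of \eqref{may95} (the argument of $A_\sigma$ is divided by $\big(\int_\Omega A(|\nabla u|)\big)^{1/\sigma}$, which is exactly the scaling that makes the normalized inequality equivalent to the general one, since $H_\sigma$ is homogeneous of degree $1/\sigma'$ in an appropriate sense and $A_\sigma$ inherits the matching degree-$1/\sigma$ scaling). I expect the main obstacle to be the bookkeeping in the one-dimensional step: verifying that the constant $c$ can be chosen depending only on $n$ and $\sigma$ (and not on $A$ or $\Omega$), which requires carefully isolating where the Young function $A$ enters only through the quantities \eqref{conv0sig}–\eqref{divinfsig} that are finite by hypothesis, and checking that the $\Delta_2$-type manipulations used to pass from averages of $\phi$ to pointwise values of $\phi$ do not secretly introduce $s_A$-dependence. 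A secondary technical point is justifying the rearrangement inequality for $W^{1,A}_0$ functions when $A$ is merely a Young function in $\Delta_2$ rather than smooth; this is standard but should be cited (e.g. from \cite{Ci_CPDE, Ci_IUMJ}) rather than reproved.
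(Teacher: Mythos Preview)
Your high-level strategy---reduce via the P\'olya--Szeg\H{o} principle to a one-dimensional Hardy-type inequality---is exactly what the paper does. The divergence is in how the one-dimensional step is handled, and there your sketch has a real gap.

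The paper does not attempt a direct H\"older/Young argument. After symmetrization it introduces the linear operator $S\phi(s) = \int_s^{|\Omega|} \phi(r)\, r^{-1/n'}\,dr$ and proves the two endpoint bounds
\[
S: L^1(0,|\Omega|) \to L^{\sigma'}(0,|\Omega|), \qquad S: L^{\sigma,1}(0,|\Omega|) \to L^\infty(0,|\Omega|),
\]
each with operator norm at most $c(n,\sigma)\,|\Omega|^{1/n-1/\sigma}$ (the first by the elementary bound on $\int_0^{|\Omega|} s^{-\sigma'/n'}\,ds$, the second by the Hardy--Littlewood rearrangement inequality). It then invokes the Orlicz interpolation theorem \cite[Theorem~4]{Ci_CPDE} as a black box to obtain \eqref{may110} for \emph{every} measurable $\phi$. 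This is precisely what makes the constant depend only on $n$ and $\sigma$: the endpoint bounds use no property of $A$ whatsoever, and the interpolation theorem packages the passage to $A$ and $A_\sigma$.

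Your direct route runs into two concrete problems. First, $H_\sigma(s) = \big(\int_0^s (t/A(t))^{1/(\sigma-1)}\,dt\big)^{1/\sigma'}$ is a function of a \emph{gradient-magnitude} variable, not of the rearrangement variable; the sentence ``$\int_s^L r^{-1/(\sigma-1)}\cdot(\text{weight from }A)\,dr$ reproduces $H_\sigma(s)$'' conflates the two and does not yield a usable pointwise bound on $\phi(s)$. Second, and more seriously, your ``undo the normalization'' step is invalid for a general Young function: neither $A$ nor $H_\sigma$ nor $A_\sigma$ is homogeneous, so the case $\int_\Omega A(|\nabla u|)\,dx=1$ does \emph{not} imply \eqref{may95} for arbitrary $u$ by any scaling. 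The modular inequality \eqref{may110} must be proved directly for every $\phi$, which is exactly what the interpolation argument delivers. Your own worry that a direct argument might ``secretly introduce $s_A$-dependence'' is, in effect, the reason the paper avoids that route.
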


\par\noindent
\begin{proof}
By the  P\'olya-Szeg\"o principle on the decrease of the functional on the right-hand side of inequality \eqref{may95} under symmetric decreasing rearrangement of functions $u \in W^{1,A}_0(\Omega)$ (see \cite{BZ}), it suffices to prove inequality \eqref{may95} in the case when $\Omega$ is a ball and the trial functions $u$ are nonnegative and radially decreasing.  As a consequence, this inequality will follow if we show that
\begin{equation}\label{may110}
\int _0^{|\Omega|} A_\sigma \Bigg(\frac{\int_s^{|\Omega|}\phi(r)r^{-\frac 1{n'}}\, dr}{c |\Omega|^{\frac 1n - \frac 1\sigma} \big(\int _0^{|\Omega|}  A(\phi(r))dr\big)^{1/\sigma}}\Bigg)\, ds \leq \int _0^{|\Omega|}  A(\phi(s))\, ds
\end{equation}
for a suitable constant $c$   as in the statement and for every measurable function $\phi : (0, |\Omega|)\to [0, \infty)$. Let $S$ be the linear operator defined as
\begin{equation}\label{S} 
S\phi(s) = \int_s^{|\Omega|}\phi(r)r^{-\frac 1{n'}}\, dr \quad \text{for $s \in (0, |\Omega|)$,}
\end{equation}
for every measurable function $\phi :  (0, |\Omega|) \to \mathbb R$ that makes the integral on the right-hand side converge. One has that
\begin{align}\label{may111}
\|S\phi\|_{L^{\sigma '}(0, |\Omega|)} & = \bigg(\int_0^{|\Omega|}|S\phi(s)|^{\sigma '}\, ds\bigg)^{\frac 1{\sigma '}} \leq \bigg(\int_0^{|\Omega|}s^{-\frac{\sigma '}{n'}}\bigg(\int_s^{|\Omega|}|\phi(r)|\, dr\bigg)^{\sigma '}\, ds\bigg)^{\frac 1{\sigma '}}
\\ \nonumber &\leq \|\phi\|_{L^1(0, |\Omega|)} \bigg(\int_0^{|\Omega|}s^{-\frac{\sigma '}{n'}}\, ds\bigg)^{\frac 1{\sigma '}} =c |\Omega|^{\frac 1n - \frac 1\sigma} \|\phi\|_{L^1(0, |\Omega|)}
\end{align}
for a suitable constant $c=c(n, \sigma)$ and for every $\phi \in L^1(0, |\Omega|)$. Also, by the Hardy-Littlewood inequality for rearrangements,
\begin{align}\label{may111bis}
\|S\phi\|_{L^{\infty}(0, |\Omega|)} & \leq  \int_0^{|\Omega|}|\phi(r)|r^{-\frac 1{n'}}\, dr 
\leq  \int_0^{|\Omega|}\phi^*(r)r^{-\frac 1{n'}}\, dr \\ \nonumber & \leq |\Omega|^{\frac 1n - \frac 1\sigma} \int_0^{|\Omega|}\phi^*(r)r^{-\frac 1{\sigma'}}\, dr =  |\Omega|^{\frac 1n - \frac 1\sigma} \|\phi\|_{L^{\sigma, 1}(0, |\Omega|)}
\end{align}
for every $\phi \in L^{\sigma, 1}(0, |\Omega|)$.  Here, $\varphi^*$ denotes the decreasing rearrangement of $\varphi$, and  $L^{\sigma, 1}(0, |\Omega|)$ is the Lorentz space whose norm is defined by the last integral in equation \eqref{may111bis}. Owing to equations  \eqref{may111} and \eqref{may111bis}, the interpolation theorem established in  \cite[Theorem 4]{Ci_CPDE} can be applied to deduce inequality \eqref{may110}.
\end{proof}

The next lemma tells us that the assumptions of Theorem \ref{sobolev} are certainly fulfilled   if $A$  satisfies  the $\Delta_2$-condition, provided that $\sigma$ is sufficiently large.

\begin{lemma}\label{aux}
Let $A$ be a continuously differentiable Young function satisfying the $\Delta_2$-condition and let $\sigma >s_A$. Then conditions \eqref{conv0sig} and \eqref{divinfsig} are fulfilled.
\end{lemma}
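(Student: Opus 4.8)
The plan is to show that the $\Delta_2$-condition on $A$ forces $A(t)$ to grow at most polynomially in $t$, while the convexity of $A$ together with $A(0)=0$ forces at least linear growth; these two bounds on $A$ translate into two-sided power bounds on the integrand $\big(t/A(t)\big)^{1/(\sigma-1)}$ near $0$ and near $\infty$, which are precisely what is needed for the convergence in \eqref{conv0sig} and the divergence in \eqref{divinfsig}. The key quantitative input is the standard fact that if $A\in\Delta_2$ with index $s_A<\infty$, then for every $\lambda\geq 1$ one has $A(\lambda t)\leq c\,\lambda^{s_A}A(t)$ for all $t>0$ (with $c$ depending only on the $\Delta_2$-constant), and dually $A(\lambda t)\geq c^{-1}\lambda^{1}A(t)$ for $\lambda\geq 1$ because $A$ is convex and vanishes at the origin, so that $t\mapsto A(t)/t$ is nondecreasing.

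First I would record the growth estimates. Since $A$ is convex with $A(0)=0$, the difference quotient $A(t)/t$ is nondecreasing, hence for $0<t\leq 1$ we get $A(t)\leq A(1)\,t$, and for $t\geq 1$ we get $A(t)\geq A(1)\,t$. On the other hand, writing $t=2^{k}s$ and iterating \eqref{delta2} gives, for $t\geq 1$, a bound $A(t)\leq C\,t^{\log_2 c}$ with $C=C(A)$; equivalently one may phrase this through the index $s_A$ as $A(t)\leq C t^{s_A}$ for $t\geq 1$ after possibly enlarging $s_A$ to an integer or to any $\sigma>s_A$. For $t\leq 1$ a symmetric iteration of \eqref{delta2} in the other direction gives $A(t)\geq C^{-1}t^{s_A}$ for $0<t\leq1$. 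Thus near $0$,
\begin{equation*}
\frac{t}{A(t)}\leq C\,t^{1-s_A}\quad\text{for }0<t\leq 1,
\end{equation*}
and near infinity,
\begin{equation*}
\frac{t}{A(t)}\leq C\,t^{1-1}=C\quad\text{is false in general, so instead}\quad \frac{t}{A(t)}\leq C\,t\cdot t^{-1}\ \Longrightarrow\ \frac{t}{A(t)}\leq C\quad\text{for }t\geq1
\end{equation*}
--- more precisely, from $A(t)\geq A(1)t$ for $t\geq1$ we obtain $t/A(t)\leq A(1)^{-1}$, a bounded quantity, which is not yet enough for divergence; the sharp statement is rather $t/A(t)\geq C^{-1}t^{1-s_A}$ for $t\geq 1$, which is the inequality that yields \eqref{divinfsig}.

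Next I would verify the two integral conditions. For \eqref{conv0sig}: using $t/A(t)\leq C\,t^{1-s_A}$ for small $t$, we have $\big(t/A(t)\big)^{1/(\sigma-1)}\leq C^{1/(\sigma-1)}t^{(1-s_A)/(\sigma-1)}$, and since $\sigma>s_A\geq 1$ the exponent satisfies $(1-s_A)/(\sigma-1)>-1$, so the integral $\int_0 t^{(1-s_A)/(\sigma-1)}\,dt$ converges; hence \eqref{conv0sig} holds. For \eqref{divinfsig}: using $A(t)\leq C\,t^{s_A}$ for large $t$ we get $t/A(t)\geq C^{-1}t^{1-s_A}$, hence $\big(t/A(t)\big)^{1/(\sigma-1)}\geq C^{-1/(\sigma-1)}t^{(1-s_A)/(\sigma-1)}$, and again because $(1-s_A)/(\sigma-1)>-1$ the integral $\int^\infty t^{(1-s_A)/(\sigma-1)}\,dt$ diverges; hence \eqref{divinfsig} holds. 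This completes the argument.

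The only delicate point — the ``main obstacle'' — is bookkeeping the constants in the iteration of the $\Delta_2$-inequality and relating the exponent $\log_2 c$ produced by that iteration to the index $s_A$; this is where one must be careful to invoke the equivalence (recorded already in the text after \eqref{indicesB}, namely $A\in\Delta_2\iff s_A<\infty$) and to note that one may replace $s_A$ by any larger number, in particular by something strictly below $\sigma$, so that the critical exponent $(1-s_A)/(\sigma-1)$ stays strictly above $-1$. Everything else is a routine comparison with power integrals.
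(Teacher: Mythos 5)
Your proposal reaches the right conclusion and the comparison-to-powers step at the end is exactly the paper's, but the route you take to the two-sided power bounds on $A$ is different and, as you yourself flag, leaves a small gap. The paper does not iterate the $\Delta_2$-inequality at all: it simply observes that, since $tA'(t)\leq s_A A(t)$ by definition of $s_A$, differentiation shows $t\mapsto A(t)/t^{s_A}$ is non-increasing, whence immediately $A(t)\geq A(1)t^{s_A}$ for $t\in(0,1]$ and $A(t)\leq A(1)t^{s_A}$ for $t\in[1,\infty)$, with the exponent $s_A$ appearing exactly, no constant bookkeeping needed. Your iteration of $A(2t)\leq cA(t)$ produces a power $\log_2 c$ that is a priori unrelated to $s_A$; making your argument airtight requires the observation that, taking the optimal $\Delta_2$-constant $c^*=\sup_t A(2t)/A(t)$, one has $\log_2 c^*\leq s_A$ (which follows by integrating $\tau A'(\tau)/A(\tau)\leq s_A$ over $[t,2t]$ — i.e.\ precisely the same differential information the paper uses directly). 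You acknowledge this as ``the only delicate point'' but do not actually close it; your phrase about ``enlarging $s_A$ to an integer or to any $\sigma>s_A$'' is not the right move, since what is needed is a guarantee that the exponent coming out of the $\Delta_2$-iteration can be taken $\leq s_A$ (or at least $<\sigma$), not an enlargement of $s_A$. In short: your approach is correct in spirit and can be completed, but the paper's differentiation of $A(t)/t^{s_A}$ is both shorter and self-justifying, while yours detours through the $\Delta_2$-constant and needs one extra lemma ($\log_2 c^*\leq s_A$) that you leave implicit. The rest — $(1-s_A)/(\sigma-1)>-1$ since $\sigma>s_A\geq 1$, hence convergence of $\int_0 t^{(1-s_A)/(\sigma-1)}dt$ and divergence of $\int^\infty t^{(1-s_A)/(\sigma-1)}dt$ — matches the paper.
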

\begin{proof}
Owing to the definition of $s_a$, one verifies via differentiation that the function $\frac{A(t)}{t^{s_A}}$ is non-increasing. Thus,
\begin{equation}\label{may106} A(t) \geq A(1) t^{s_A}  \quad \text{if $t\in (0,1]$,}
\end{equation}
and 
\begin{equation}\label{may107}  A(t) \leq A(1) t^{s_B}    \quad \text{if $t\in [1, \infty)$.}
\end{equation}
Equations \eqref{conv0sig} and \eqref{divinfsig} follow from \eqref{may106} and \eqref{may107}, respectively.
\end{proof}

\subsection{\texorpdfstring{Young functions built upon  the function $a$}{Young functions built upon  the function a}}
\label{YoungB}

 \par Given a continuously differentiable  function $a: (0, \infty) \to (0,\infty)$ such that $i_a \geq -1$, let $b$ and $B$ the functions defined by \eqref{b} and \eqref{B}.
%
%
Our assumption on $i_a$ ensures that $b$ is a non-decreasing function, and hence  $B$  is a Young function. 
\\
 One has that 
\begin{equation}\label{jan200}
\text{$i_b= i_a+1$\quad  and \quad $s_b=s_a+1$.}
\end{equation}
Also 
\begin{equation}\label{jan201}
\text{$i_B\geq i_b+1$ \quad and \quad  $s_B\leq s_b +1$.}
\end{equation}
Thus, 
if $s_a<\infty$, then the functions $b$ and $B$ satisfy the $\Delta _2$-conditon, and if $i_a>-1$, then the function $\widetilde B$ satisfies the $\Delta _2$-conditon. 
\\ 
Hence, if $s_a<\infty$, then for every   $\lambda >1$, there exists a constant $c=c(\lambda, s_a)>1$ such that 
\begin{equation}\label{bdelta2}
b(\lambda t) \leq c b(t) \qquad \hbox{for $t \geq 0$,}
\end{equation}
and 
\begin{equation}\label{Bdelta2}
B(\lambda t) \leq c B(t) \qquad \hbox{for $t \geq 0$.}
\end{equation}
Moreover,
\begin{align}\label{may100}
tb'(t) \leq (s_a + 1) b(t) \quad \text{for $t>0$,}
\end{align}
and
\begin{align}\label{may101}
B(t) \leq tb(t) \leq  (s_a + 2) B(t) \quad \text{for $t>0$.}
\end{align}
Since $\widetilde B (b(t)) \leq B(2t)$ for $t \geq 0$, 
there exists a constant $c=c(s_a)$ such that
\begin{equation}\label{may41}
\widetilde B (b(t)) \leq c B(t) \quad \text{for $t\geq 0$.}
\end{equation}
Finally, if $i_a>-1$ and $s_a<\infty$, then
\begin{equation}\label{dic99}
a(1)\min\{t^{i_a}, t^{s_a}\} \leq a(t) \leq a(1)\max\{t^{i_a}, t^{s_a}\} \qquad \text{for $t>0$.}
\end{equation}

If the function $a$ is as above and $\varepsilon >0$, we define the function $a_\varepsilon  : [0,\infty) \to (0,\infty)$ as
\begin{equation}\label{aeps}
a_\varepsilon (t) = a(\sqrt {t^2 + \varepsilon^2})\quad \hbox{for $t \geq 0$.}
\end{equation}
The functions $b_\ep$ and $B_\ep$ are defined as in \eqref{b} and \eqref{B}, with $a$ replaced by $a_\ep$.

\begin{lemma}\label{aeps1}
Assume that the function $a: (0, \infty) \to (0, \infty)$ is continuously differentiable in $(0, \infty)$ and that $i_a>-1$ and $s_a<\infty$. Let   $ \varepsilon>0$ and let $a_ \varepsilon$ be the function defined by \eqref{aeps}. Then
\begin{equation}\label{epsindex}
i_{a_\ep} \geq  \min \{i_a, 0\} \quad \hbox{and} \quad s_{a_\ep} \leq \max \{s_a, 0\}\,,
\end{equation}
where $i_{a_\ep}$ and $s_{a_\ep}$ are defined as in \eqref{ia}, with $a$ replaced by $a_\ep$.
\\ Let $b$, $B$, $b_\ep$ and  $B_\ep$ be the functions defined above. 
Then there exist constants $c_1, c_2, c_3$, depending only on $s_a$, such that
\begin{equation}\label{may40}
c_1B(t) -c_2 B(\ep) \leq a_\ep(t) t^2 \leq c_3(B(t) + B(\ep)) \quad \text{for $t \geq 0$.}
\end{equation}
Moreover, there exists a constant $c=c(s_a)$ such that
\begin{equation}\label{nov12}
 B_\ep (t)   \leq c(B(t) + B(\ep)) \quad \text{for $t \geq 0$,}
\end{equation}
and
\begin{equation}\label{may60}
\widetilde B (b_\ep (t))   \leq c(B(t) + B(\ep)) \quad \text{for $t \geq 0$.}
\end{equation}
\end{lemma}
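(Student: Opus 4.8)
The plan is to establish the four assertions in sequence, each time reducing to the chain rule together with the structural properties \eqref{may101}, \eqref{Bdelta2} and \eqref{may41} already recorded for $a$, $b$, $B$. Throughout I would write $g(t) = \sqrt{t^2+\ep^2}$, so that $a_\ep(t) = a(g(t))$, $g(t)\ge \ep>0$, $t\le g(t)\le t+\ep$, and $g'(t) = t/g(t)$; in particular $a_\ep\in C^1([0,\infty))$ and $b_\ep\in C^1([0,\infty))$.

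For the index bounds \eqref{epsindex}, differentiating gives
\[
\frac{t\,a_\ep'(t)}{a_\ep(t)} \;=\; \frac{t\,a'(g(t))\,g'(t)}{a(g(t))} \;=\; \frac{t^2}{t^2+\ep^2}\,Q_a\big(g(t)\big)\qquad\text{for }t>0,
\]
where $Q_a$ is as in \eqref{I}. Since $\frac{t^2}{t^2+\ep^2}\in[0,1)$ and $Q_a(g(t))\in[i_a,s_a]$, the right-hand side is a convex combination of $Q_a(g(t))$ and $0$, hence belongs to $[\min\{i_a,0\},\max\{s_a,0\}]$; taking the infimum and the supremum over $t>0$ yields \eqref{epsindex}. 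Note that this also shows $i_{a_\ep}>-1$ (because $i_a>-1$), so that $b_\ep$ is nondecreasing on $[0,\infty)$ and $B_\ep$ is a Young function satisfying the $\Delta_2$-condition with constant depending only on $s_a$.

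For \eqref{may40} I would use $a(s)\,s = b(s)$ to write $a_\ep(t)\,t^2 = \frac{t^2}{t^2+\ep^2}\,g(t)\,b(g(t))$ and apply \eqref{may101} at $s=g(t)$, i.e. $B(g(t))\le g(t)\,b(g(t))\le (s_a+2)\,B(g(t))$. The upper bound then follows from $\frac{t^2}{t^2+\ep^2}\le1$, monotonicity of $B$, $g(t)\le t+\ep$, convexity of $B$ and the $\Delta_2$-inequality \eqref{Bdelta2}, which together give $a_\ep(t)t^2\le (s_a+2)B(t+\ep)\le c_3(B(t)+B(\ep))$. For the lower bound I would split cases: if $t\ge\ep$ then $\frac{t^2}{t^2+\ep^2}\ge\tfrac12$ and $B(g(t))\ge B(t)$, so $a_\ep(t)t^2\ge\tfrac12 B(t)\ge\tfrac12 B(t)-\tfrac12 B(\ep)$; if $t<\ep$ then $B(t)\le B(\ep)$, so $\tfrac12 B(t)-\tfrac12 B(\ep)\le0\le a_\ep(t)t^2$. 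Hence \eqref{may40} holds, e.g. with $c_1=c_2=\tfrac12$.

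The remaining two estimates are immediate consequences. Since $b_\ep$ is nondecreasing, $B_\ep(t)=\int_0^t b_\ep(s)\,ds\le t\,b_\ep(t)=a_\ep(t)\,t^2$, and the upper bound in \eqref{may40} gives \eqref{nov12}. For \eqref{may60}, from $t\le g(t)$ we get $b_\ep(t)=a(g(t))\,t\le a(g(t))\,g(t)=b(g(t))$, so monotonicity of $\widetilde B$ and \eqref{may41} at $s=g(t)$ give $\widetilde B(b_\ep(t))\le\widetilde B(b(g(t)))\le c\,B(g(t))\le c\,B(t+\ep)\le c(B(t)+B(\ep))$, again by \eqref{Bdelta2} and convexity. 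The only delicate point is the lower bound in \eqref{may40}: the weight $\frac{t^2}{t^2+\ep^2}$ degenerates as $t\to0^+$, which is exactly why the correction term $-c_2 B(\ep)$ cannot be dropped and why the case split at $t=\ep$ is needed.
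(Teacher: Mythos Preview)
Your proof is correct and follows essentially the same strategy as the paper: the index computation, the identity $a_\ep(t)t^2 = \frac{t^2}{t^2+\ep^2}\,g(t)\,b(g(t))$ combined with \eqref{may101}, the case split at $t=\ep$ for the lower bound, and the chains $B_\ep(t)\le t\,b_\ep(t)=a_\ep(t)t^2$ and $b_\ep(t)\le b(g(t))$ for \eqref{nov12} and \eqref{may60}. Your lower bound for \eqref{may40} is in fact slightly cleaner than the paper's: the paper reaches $B(t)\le c\,a_\ep(t)\,t\sqrt{t^2+\ep^2}$ via an integration argument with a constant depending on $s_a$, whereas your direct use of $g(t)b(g(t))\ge B(g(t))\ge B(t)$ yields the explicit constants $c_1=c_2=\tfrac12$.
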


\begin{proof}
Property \eqref{epsindex} can be verified by straightforward computations. Consider equation \eqref{may40}.  One has  that
\begin{align}\label{nov1}
a_\ep(t) t^2 \leq a(t+\ep) t^2 \leq (s_a+2)B(t+ \ep)\leq (s_a+2) (B(2t) + B(2\ep)) \leq c(B(t)+B(\ep)) \quad \text{for $t\geq 0$,}
\end{align}
for some constant $c=c(s_a)$,
where the second inequality holds by \eqref{may101} and the last one by \eqref{delta2}. This proves the second inequality in \eqref{may40}. As for the first one, observe that
\begin{align}\label{nov2}
B(t) \leq B(t+\ep) \leq B(2t) + B(2\ep) \leq c B(t) + c B(\ep)  \quad \text{for $t\geq 0$,}
\end{align}
for some constant $c=c(s_a)$, where we have made use of inequality \eqref{delta2} again. Now, 
\begin{align}\label{nov3}
B(t) & = \int_0^t a(\tau)\tau \, d\tau \leq  \int_0^t a(\tau +\ep)(\tau +\ep) \, d\tau \leq  \int_0^t a(2\sqrt{\tau^2 +\ep^2})2\sqrt{\tau^2 +\ep^2} \, d\tau \\ 
\nonumber & \leq c  \int_0^t a(\sqrt{\tau^2 +\ep^2})\sqrt{\tau^2 +\ep^2} \, d\tau  \leq c\, t \,a(\sqrt{t^2 +\ep^2})\sqrt{t^2 +\ep^2}= c \,a_\ep (t) t \sqrt{t^2 +\ep^2} \quad \text{for $t\geq 0$,}
\end{align}
for some constant $c=c(s_a)$, where the third inequality is due to \eqref{bdelta2}. On the other hand,
\begin{align}\label{nov4} 
a_\ep (t) t \sqrt{t^2 +\ep^2} \leq \sqrt 2 a_\ep (t) t^2 \qquad \text{if $t\geq \ep$,}
\end{align}
and 
\begin{align}\label{nov5} 
a_\ep (t) t \sqrt{t^2 +\ep^2} \leq \sqrt 2 a_\ep (\ep) \ep^2  = \sqrt 2 a (\sqrt 2 \ep) \ep^2 \leq   c   B(\ep)\qquad \text{if $0\leq t\leq \ep$,}
\end{align}
for some constant $c=c(s_a)$, where the last  inequality holds thanks to \eqref{may101}. Combining inequalities \eqref{nov3}--\eqref{nov5} yields
\begin{align*}
B(t) \leq c a_\ep (t) t^2  + c B(\ep)  \qquad \text{for $t\geq 0$,}
\end{align*}
for some constant $c=c(s_a)$. Hence, the first inequality in \eqref{may40} follows.
\\ Inequality \eqref{nov12} holds because of the first inequality in \eqref{may101}, applied with $B$ replaced by $B_\ep$, and of the second inequality in \eqref{may40}. 
\\ Inequality
\eqref{may60} is a consequence of the following chain:
\begin{align}\label{nov6}
\widetilde B(b_\ep (t) ) & = \widetilde B(a (\sqrt{t^2 +\ep^2}) t)\leq  \widetilde B(b (\sqrt{t^2 +\ep^2}))\\ \nonumber & \leq 
 \widetilde B(b (t+\ep))  \leq c B(t+\ep) \leq c' (B(t) + B(\ep))  \quad \text{for $t\geq 0$,}
\end{align}
for some constants $c$ and $c'$ depending on $s_a$. Notice, that
 we have made use of property \eqref{may41} in last but one inequality, and of property \eqref{delta2} in the last inequality.
\end{proof}

\begin{lemma}\label{holder}
Assume that the function $a: (0, \infty) \to (0, \infty)$ is continuously differentiable in $(0, \infty)$ and that $i_a>-1$ and $s_a<\infty$. Let   $ \varepsilon>0$ and let $a_ \varepsilon$ be the function defined by \eqref{aeps}. Let $M>0$. Then there exists a constant $c=c(i_a, s_a, \ep, M)$ such that
\begin{equation}\label{holder1}
|P-Q| \leq c |a_\ep(P)P -a_\ep(Q)Q|
\end{equation}
for every $P, Q \in \mathbb R^{N\times n}$ such that $|P|\leq M$ and $|Q|\leq M$.
\end{lemma}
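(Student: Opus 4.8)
The plan is to read inequality \eqref{holder1} as a quantitative, \emph{local} strong monotonicity statement for the vector field $V_\ep\colon \rNn\to\rNn$ defined by $V_\ep(P)=a_\ep(|P|)P$ (so that the expression $a_\ep(P)P$ in the statement is $V_\ep(P)$), and to prove it by integrating a uniform ellipticity estimate for the differential $DV_\ep$ along the segment joining $Q$ to $P$. The first point I would record is that, precisely because $\ep>0$, the field $V_\ep$ is of class $C^1$ on \emph{all} of $\rNn$, with no singularity at the origin: for $|P|\le M$ the argument $\sqrt{|P|^2+\ep^2}$ stays in the compact subinterval $[\ep,\sqrt{M^2+\ep^2}\,]$ of $(0,\infty)$, where $a\in C^1$ and $a>0$, and $P\mapsto|P|^2$ is smooth, so $P\mapsto a_\ep(|P|)=a(\sqrt{|P|^2+\ep^2})$ is $C^1$; consequently
\[
 DV_\ep(P)[\xi]=a_\ep(|P|)\,\xi+a_\ep'(|P|)\,\frac{P\cdot\xi}{|P|}\,P \quad(P\neq0),\qquad DV_\ep(0)[\xi]=a_\ep(0)\,\xi ,
\]
for every $\xi\in\rNn$, where $P\cdot\xi$ is the scalar product of matrices.

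Next I would establish a uniform ellipticity bound for $DV_\ep$ on the ball $\{|P|\le M\}$. Testing the above formula with $\xi$ and writing $\mu=(P\cdot\xi)^2/(|P|^2|\xi|^2)\in[0,1]$ for $P\ne0$ gives
\[
 DV_\ep(P)[\xi]\cdot\xi=a_\ep(|P|)\,|\xi|^2\bigl(1+Q_{a_\ep}(|P|)\,\mu\bigr),
\]
with $Q_{a_\ep}$ as in \eqref{I}. By \eqref{epsindex} one has $i_{a_\ep}\ge\min\{i_a,0\}>-1$, so, distinguishing the sign of $Q_{a_\ep}(|P|)$ and using $\mu\in[0,1]$, the bracket is $\ge\min\{1,1+i_{a_\ep}\}\ge\min\{1,1+\min\{i_a,0\}\}=:\theta>0$ (a quantity depending only on $i_a$), and this lower bound clearly persists at $P=0$ since $\theta\le1$. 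Since $a$ is continuous and positive on the compact interval $[\ep,\sqrt{M^2+\ep^2}\,]$ — quantitatively one may invoke \eqref{dic99} — there is $a_0>0$, depending only on $a(1)$, $i_a$, $s_a$, $\ep$ and $M$, with $a_\ep(|P|)\ge a_0$ whenever $|P|\le M$; hence $DV_\ep(P)[\xi]\cdot\xi\ge\lambda_0|\xi|^2$ for all $\xi$ and all $|P|\le M$, with $\lambda_0=\theta a_0>0$.

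Finally, since $|P|,|Q|\le M$ and the ball $\{|\cdot|\le M\}$ is convex, the segment $P_t=Q+t(P-Q)$ stays inside it for $t\in[0,1]$, so, using $V_\ep\in C^1$,
\[
 \bigl(V_\ep(P)-V_\ep(Q)\bigr)\cdot(P-Q)=\int_0^1 DV_\ep(P_t)[P-Q]\cdot(P-Q)\,dt\ \ge\ \lambda_0\,|P-Q|^2 ,
\]
and the Cauchy--Schwarz inequality then yields $\lambda_0|P-Q|^2\le|V_\ep(P)-V_\ep(Q)|\,|P-Q|$, i.e. \eqref{holder1} with $c=\lambda_0^{-1}$. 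I do not expect a serious obstacle here: the computation is essentially routine. The two points that require genuine care are (i) the assertion that $V_\ep$ is $C^1$ across $P=0$ — which is why one works with $a_\ep(|P|)=a(\sqrt{|P|^2+\ep^2})$ as a $C^1$ function of the smooth quantity $|P|^2$, the hypothesis $\ep>0$ being essential — and (ii) that the ellipticity constant $\lambda_0$ does not degenerate: the lower bound $\theta$ on the bracket uses $i_a>-1$ (the strengthened form of \eqref{fundhp}) in an essential way, while the lower bound $a_0$ on $a_\ep$ uses the continuity and positivity of $a$ over the compact range $[\ep,\sqrt{M^2+\ep^2}\,]$, which is exactly what makes the final constant $c$ depend on $M$, on $\ep$, and on $a$ itself (through, say, $a(1)$ together with $i_a$ and $s_a$, via \eqref{dic99}).
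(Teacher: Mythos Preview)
Your argument is correct and follows essentially the same route as the paper: both proofs establish the monotonicity inequality $(a_\ep(|P|)P-a_\ep(|Q|)Q)\cdot(P-Q)\ge \lambda_0|P-Q|^2$ on the ball $\{|\cdot|\le M\}$ and then conclude via Cauchy--Schwarz. The only difference is that the paper obtains this monotonicity by invoking \cite[Lemma~21]{DE} together with \eqref{epsindex} and the lower bound $a_\ep(|P|+|Q|)\ge \min\{a(t):\ep\le t\le \sqrt{2M^2+\ep^2}\}$, whereas you derive it directly by the $C^1$ computation of $DV_\ep$ and integration along the segment; your version is thus self-contained but otherwise equivalent (and your remark that the constant ultimately depends on $a$ itself, e.g.\ through $a(1)$ via \eqref{dic99}, is accurate and matches what the paper's proof actually uses).
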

\begin{proof}
By \cite[Lemma 21]{DE}, there exists a positive constant $c=c(i_{a_\ep}, s_{a_\ep})$ such that
\begin{equation}\label{holder2}
c \big[a_\ep (|P|+|Q|) + a_\ep'(|P|+|Q|) (|P|+|Q|)\big] |P-Q|^2 \leq (a_\ep(|P|)P - a_\ep(|Q|)Q) \cdot (P-Q) 
\end{equation}
for every $P, Q \in \mathbb R^{N\times n}$. Hence, via inequalities \eqref{epsindex},
\begin{equation}\label{holder3}
c (1+ \min\{i_a, 0\}) a_\ep (|P|+|Q|)  |P-Q| \leq |a_\ep(|P|)P - a_\ep(|Q|)Q|
\end{equation}
for every $P, Q \in \mathbb R^{N\times n}$. Inequality \eqref{holder} hence follows, since
$$a_\ep (|P|+|Q|)  \geq \min \big\{a(t): \ep \leq t \leq \sqrt{2M^2+ \ep^2}\big\}>0$$
if $|P|\leq M$ and $|Q|\leq M$, and  $(1+ \min\{i_a, 0\})>0$.
\end{proof}
One more function associated with a function $a$ as above and to a number $\ep >0$ will be needed in our proofs. The function in question is denoted by 
$V_\ep : \rNn \to \rNn$ and is defined as 
\begin{equation}\label{nov8}
V_\ep (P)= \sqrt {a_\ep(|P|)}P \qquad \text{ for $P \in \rNn$.}
\end{equation}

\begin{lemma}\label{aeps2} Assume that the function $a: (0, \infty) \to (0, \infty)$ is continuously differentiable  and such that  $i_a>-1$ and $s_a<\infty$.
Let   $ \varepsilon>0$ and let $a_ \varepsilon$ be the function defined by \eqref{aeps}. 
Then 
\begin{equation}\label{may51}
a_\ep(|P|) P \to a(|P|) P \quad \text{as $\ep \to 0^+$,}
\end{equation}
uniformly for $P$ in any compact subset of $\rNn$.
\\ Moreover,  
\begin{equation}\label{may70}
(a_\ep(|P|) P - a_\ep(|Q|) Q)\cdot (P - Q) \approx \big|V_\ep (P) -V_\ep (Q)\big|^2  \quad \text{for $P, Q \in \rNn$,}
\end{equation}
where the relation $\approx$ means that the two sides are bounded by each other, up to positive multiplicative constants depending only on $i_a$ and $s_a$.
\end{lemma}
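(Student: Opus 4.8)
The plan is to handle the two assertions separately. For \eqref{may51}, note first that $a_\ep(|P|)P-a(|P|)P=\big(a(\sqrt{|P|^2+\ep^2})-a(|P|)\big)P$ depends on $P$ only through $r=|P|$. Since $i_a>-1$, the function $b(t)=a(t)t$ extends continuously to $[0,\infty)$ with $b(0)=0$: indeed \eqref{dic99} gives $a(t)t\le a(1)\,t\max\{t^{i_a},t^{s_a}\}\to0$ as $t\to0^+$. Fixing $M>0$, I would bound $\sup_{0\le r\le M}\big|\big(a(\sqrt{r^2+\ep^2})-a(r)\big)r\big|$ by splitting the range of $r$: for $0\le r\le\delta$ this quantity is at most $a(\sqrt{r^2+\ep^2})r+a(r)r\le b(\sqrt{r^2+\ep^2})+b(r)$, which is small once $\delta$ and $\ep$ are small, by continuity of $b$ at $0$; for $\delta\le r\le M$ it is controlled by the uniform continuity of $a$ on the compact interval $[\delta,\sqrt{M^2+1}]$ together with $0\le\sqrt{r^2+\ep^2}-r\le\ep$. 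This yields uniform convergence on every ball $\{|P|\le M\}$, hence on every compact subset of $\rNn$.

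For \eqref{may70}, set $\mathbf a_\ep(P)=a_\ep(|P|)P$, so that $\mathbf a_\ep$ is the gradient of the convex $C^1$ function $P\mapsto B_\ep(|P|)$, while $V_\ep$, defined in \eqref{nov8}, is the $V$-function attached to the same $N$-function $B_\ep$; since $\sqrt{t^2+\ep^2}\ge\ep>0$ one has $a_\ep\in C^1([0,\infty))$, whence $\mathbf a_\ep,V_\ep\in C^1(\rNn,\rNn)$. The decisive point is that, by \eqref{epsindex} together with the index identities \eqref{jan200}--\eqref{jan201} applied to $a_\ep$, $b_\ep$, $B_\ep$, one gets $i_{B_\ep}\ge\min\{i_a,0\}+2>1$ and $s_{B_\ep}\le\max\{s_a,0\}+2<\infty$; hence the $\Delta_2$-constants of $B_\ep$ and $\widetilde{B_\ep}$ are bounded in terms of $i_a$ and $s_a$ only, \emph{uniformly in $\ep$}. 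The equivalence \eqref{may70} is then the cornerstone monotonicity lemma in the theory of elliptic systems with $N$-function growth, valid with constants depending only on those $\Delta_2$-data; concretely, \cite[Lemma 21]{DE}, combined with \eqref{epsindex}, gives
\begin{equation*}
\big(\mathbf a_\ep(P)-\mathbf a_\ep(Q)\big)\cdot(P-Q)\approx\big(a_\ep(|P|+|Q|)+a_\ep'(|P|+|Q|)(|P|+|Q|)\big)|P-Q|^2,
\end{equation*}
and the task reduces to identifying $|V_\ep(P)-V_\ep(Q)|^2$ with the same quantity, with constants controlled by $i_a,s_a$.

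One half of that identification is elementary: computing the eigenvalues of $D\mathbf a_\ep(R)$ and of $DV_\ep(R)$ shows that both $D\mathbf a_\ep(R)(H)\cdot H$ and $|DV_\ep(R)(H)|^2$ lie between $c_1\,a_\ep(|R|)|H|^2$ and $c_2\,a_\ep(|R|)|H|^2$, where $c_1,c_2$ depend only on $i_a,s_a$ --- here one uses $Q_{a_\ep}(|R|)\in[\min\{i_a,0\},\max\{s_a,0\}]$ and, crucially, $\min\{i_a,0\}>-1$ to keep $c_1>0$; integrating along the segment $[Q,P]$, and using convexity of $|\,\cdot\,|^2$ for $V_\ep$, yields $\big(\mathbf a_\ep(P)-\mathbf a_\ep(Q)\big)\cdot(P-Q)\approx|P-Q|^2\int_0^1 a_\ep(|(1-t)Q+tP|)\,dt$ and $|V_\ep(P)-V_\ep(Q)|^2\le c_2\,|P-Q|^2\int_0^1 a_\ep(|(1-t)Q+tP|)\,dt$. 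The hard part will be the reverse bound $\big(\mathbf a_\ep(P)-\mathbf a_\ep(Q)\big)\cdot(P-Q)\le C\,|V_\ep(P)-V_\ep(Q)|^2$, which is the genuinely nontrivial content of the $N$-function machinery and is proved by a dichotomy: in the non-degenerate regime $|P-Q|\le\tfrac12\max\{|P|,|Q|\}$ every point of $[Q,P]$ has modulus comparable to $|P|+|Q|$, so the estimate reduces to the strict positivity of the least eigenvalue of $DV_\ep$ together with the quasi-monotonicity of $a_\ep$; in the degenerate regime $|P-Q|>\tfrac12\max\{|P|,|Q|\}$ one has $|P|+|Q|\approx|P-Q|$ and uses $|V_\ep(R)|^2\approx B_\ep(|R|)$ (which is \eqref{may101} with $B$ replaced by $B_\ep$) together with the doubling of $B_\ep$. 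I would invoke \cite{DE} for this last step; combined with the two elementary halves above, it establishes \eqref{may70} with constants depending only on $i_a$ and $s_a$.
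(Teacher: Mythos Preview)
Your argument is correct. For \eqref{may51} your split into small $|P|$ (controlled via $b(t)=a(t)t\to 0$) versus $|P|$ bounded away from $0$ (controlled by uniform continuity of $a$ on a compact interval) is exactly the paper's strategy; the only cosmetic difference is that the paper uses the $C^1$ bound $|a(\sqrt{r^2+\ep^2})-a(r)|\le\ep\max|a'|$ on $[\ell,\sqrt{L^2+1}]$ in place of uniform continuity.

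For \eqref{may70} you and the paper diverge in presentation but not in substance. The paper's proof is a one-line citation: it invokes \cite[Lemma 41]{DFTW}, which gives the full equivalence \eqref{may70} directly, with constants depending only on a lower bound for $i_{a_\ep}$ and an upper bound for $s_{a_\ep}$, and then appeals to \eqref{epsindex} to make those constants uniform in $\ep$. You instead sketch the proof of that lemma---the eigenvalue computation for $D\mathbf a_\ep$ and $DV_\ep$, integration along segments, and the dichotomy $|P-Q|\lessgtr\tfrac12\max\{|P|,|Q|\}$---before ultimately deferring the ``hard'' lower bound to \cite{DE}. Your sketch is sound (in particular, the positivity of the smallest eigenvalue of $DV_\ep$ does reduce to $1+\tfrac12\min\{i_a,0\}>0$), but it is more laborious than necessary: since the paper already has the reference \cite{DFTW} available, citing it directly is cleaner and avoids reproving standard material.
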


\begin{proof}
Fix any $0<\ell < L$ and assume that $\ep \in [0, 1]$. Since $a\in C^1(0, \infty)$, if $\ell \leq |P| \leq L$ then  
\begin{align}\label{dic100}
|a_\ep(|P|)P - a(|P|)P|  &\leq |P| |a_\ep(|P|) - a(|P|)| \\ \nonumber&\leq \max_{t\in [\ell, \sqrt{L^2+1}]}|a'(t)|(\sqrt{|P|^2+\ep^2}- |P|)\leq \max_{t\in [\ell, \sqrt{L^2+1}]}|a'(t)| \ep.
\end{align}
Moreover, if $|P|\leq 1$, then, by the second inequality in \eqref{dic99} applied with $a$ replaced by $a_\ep$ and by the first inequality in \eqref{epsindex},
\begin{equation}\label{dic101}
|a_\ep(|P|)P| \leq a_\ep (1) |P|^{1+ \min\{i_a,0\}} \leq \max_{t\in [1, \sqrt{2}]}|a(t)| |P|^{1+ \min\{i_a,0\}}.
\end{equation}
Now, let $L>0$. Fix any $\sigma >0$. By inequality \eqref{dic101}, there exists $\ell>0$ such that
\begin{equation}\label{dic102}
|a_\ep(|P|)P-a(|P|)P| \leq |a_\ep(|P|)P|+ |a(|P|)P| \leq \sigma
\end{equation}
for every $\ep \in [0, 1]$, provided that $|P|<\ell$. On the other hand, inequality \eqref{dic100} ensures that there exists $\ep_0\in (0,1)$ such that
\begin{equation}\label{dic103}
|a_\ep(|P|)P - a(|P|)P|<\sigma
\end{equation}
if $\ell \leq |P| \leq L$. From inequalities \eqref{dic102} and \eqref{dic103} we deduce that, if $0\leq \ep <\ep_0$, then 
\begin{equation}\label{dic104}
|a_\ep(|P|)P - a(|P|)P|<\sigma \qquad \text{if $|P|\leq L$.}
\end{equation}
This shows that the limit \eqref{may51} holds unifromly for $|P|\leq L$.
\\ As far as equation \eqref{may70} is concerned, it follows from \cite[Lemma 41]{DFTW} that,  if $i_{a_\ep}>-1$ and $s_{a_\ep}<\infty$, then the ratio of the two sides of this equation is bounded from below and from above by positive constants depending only on a lower bound for $i_{a_\ep}$ and an upper bound for $s_{a_\ep}$. Owing to inequalities \eqref{epsindex} and to our assumption that $i_a>-1$ and $s_a<\infty$, we have that $i_{a_\ep}\geq \min\{i_a, 0\}>0$ and $s_{a_\ep}\leq \max\{s_a, 0\}<\infty$ for every $\ep>0$. This implies that equation \eqref{may70} actually holds up to equivalence constants depending only on $i_a$ and $s_a$.
%
%
%
\end{proof}

\section{Second-order regularity:   local solutions}\label{S:local}

 The definiton of generalized local solution to the system
\begin{equation}\label{localeqbis}
- {\rm {\bf  div}}( a(|\nabla \bu|) \nabla {\bf u} ) = {\bf f}  \quad {\rm in}\,\,\, \Omega\,
\end{equation}
that will be adopted is inspired by the results of \cite{DHM}, and involves  the notion of approximate differentiability. Recall that a measurable function $\bu  : \Omega \to \rN$ is said to be approximately differentiable at $x \in \Omega$ if there exists a matrix ${\rm ap} \nabla \bu(x)  \in \mathbb R^{N\times n}$ such that, for every $\varepsilon >0$,
$$\lim_{r \to 0^+} \frac {\big|\{y\in B_r(x): \frac  1r |\bu(y)-\bu(x)- {\rm ap} \nabla \bu(x)  (y-x)|>\varepsilon\}\big|}{r^n} =0.$$
If $\bu$ is approximately differentiable at every point in $\Omega$, then the function ${\rm ap} \nabla \bu : \Omega \to \mathbb R^{N\times n}$ is measurable.
\par Assume that $a$ is as in Theorem \ref{secondloc} and let $\bff \in L^q_{\rm loc}(\Omega, \rN)$ for some $q\geq 1$. 
An approximately differentiable function $\bu : \Omega \to \rN$ is called a local  approximable  solution to system
\eqref{localeqbis}
if $a(|{\rm ap}\nabla \bu|) |{\rm ap}\nabla \bu| \in L^{1}_{\rm loc}(\Omega)$, 
and
there exist a sequence $\{\bff_k\}\subset  C^\infty (\Omega, \rN)$, with
$\bff_k \to \bff$   in $L^q_{\rm loc} (\Omega, \rN)$,
and a corresponding sequence of local weak solutions $\{\bu_k\}$ to the systems
\begin{equation}\label{localeqk}
- {\rm {\bf  div}}( a(|\nabla \bu_k|) \nabla {\bf u} _k) = {\bf f}_k  \quad {\rm in}\,\,\, \Omega\,,
\end{equation}
such that
\begin{equation}\label{approxuloc}
\bu_k \to \bu \quad \hbox{and} \quad \nabla \bu_k \to {\rm ap} \nabla \bu \quad \hbox{a.e. in $\Omega$,}
\end{equation}
and
\begin{equation}\label{approxaloc}
\lim _{k \to \infty} \int_{\Omega'}a(|\nabla \bu_k|) |\nabla \bu_k| \, dx =\int_{\Omega'} a(|{\rm ap}\nabla \bu|) |{\rm ap}\nabla \bu|\, dx\,
\end{equation}
for every open set $\Omega' \subset \subset \Omega$. In what follows, we shall denote ${\rm ap}\nabla \bu$ simply by $\nabla \bfu$.

Weak solutions to  system \eqref{localeqbis}
are defined in a standard way
if  $\bff \in L^1_{\rm loc}(\Omega, \rN) \cap (W^{1,B}_0(\Omega, \rN))'$, where $B$ is the Young function defined via  \eqref{B}. Namely, a function $\bu \in W^{1,B}_{\rm loc}(\Omega, \mathbb R^N)$ is called a local weak solution to this system 
 if
\begin{equation}\label{weaklocl}
\int _{\Omega'} a(|\nabla \bu|)\nabla \bu \cdot \nabla  \bfvarphi \color{black} \,dx = \int _{\Omega '} {\bff}\cdot \bfvarphi  \,dx
\end{equation}
for every open set $\Omega ' \subset \subset \Omega$, and every function $\bfvarphi \in W^{1,B}_0(\Omega', \mathbb R^N)$.

\smallskip
\par
Inequality \eqref{pointwise} enters the proof of Theorem  \ref{secondloc} through Lemma  \ref{step1local} below. The latter will be applied to solutions to systems  which approximate system \eqref{localeq}, and involve regularized differential operators and smooth right-hand sides. Lemma \ref{step1local}
 can be deduced from Theorem \ref{lemma1} and inequality \eqref{>0}, along the same lines as in the proof of \cite[Theorem 3.1, Inequality (3.4)]{CiMa_JMPA}.  The details are omitted, for brevity.  We seize this opportunity to point out an incorrect dependence on the radius $R$ of the constants  in that inequality, due to a flaw in the scaling argument in the derivation of \cite[Inequality (3.43)]{CiMa_JMPA}.

\begin{lemma}\label{step1local}
Let $n \geq 2$, $N \geq 2$, and let $\Omega$ be an open set in $\rn$. Assume that the function $a \in C^1([0, \infty))$ satisfies conditions \eqref{positive}--\eqref{bC1}.
%
%
Then there exists a constant $C= C(n,N, i_a, s_a)$,  such that
\begin{multline}\label{loc15}
R^{-1}\bignorm{a(|\nabla \bu|) \nabla \bu}_{L^2(B_R, \rNn)} + \,\bignorm{\nabla \big(a(|\nabla \bu|) \nabla \bu\big)}_{L^2(B_R, \rNn)}\\ \leq C\Big(\|{\rm {\bf div}} ( a(|\nabla \bu|) \nabla \bu)\|_{L^2(B_{2R}, \mathbb R^{N})} + R^{-\frac n2-1}\|a(|\nabla \bu|)\nabla \bu\|_{L^1(B_{2R}, \mathbb R^{N\times n})}\Big)
\end{multline}
for every function $\bu \in C^3(\Omega, \rN)$ and any ball $B_{2R}  \subset \subset \Omega$.
\end{lemma}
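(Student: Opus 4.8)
The proof of Lemma \ref{step1local} is a localization argument built on the pointwise inequality \eqref{pointwise}. The plan is as follows. First, fix a ball $B_{2R}\subset\subset\Omega$ and a cutoff function $\eta\in C_0^\infty(B_{2R})$ with $\eta\equiv 1$ on $B_R$, $0\le\eta\le 1$, and $|\nabla\eta|\le c/R$, $|\nabla^2\eta|\le c/R^2$. Since $\bu\in C^3(\Omega,\rN)$ and $a\in C^1([0,\infty))$ with \eqref{bC1}, the function $a(|\nabla\bu|)\nabla\bu$ is $C^1$ and all the quantities below are classical. Multiply the pointwise inequality \eqref{pointwise} by $\eta^2$ and integrate over $B_{2R}$. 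On the left we get $\int \eta^2|{\rm {\bf div}}(a(|\nabla\bu|)\nabla\bu)|^2\,dx$. On the right, the divergence term is integrated by parts, producing $-\int \nabla(\eta^2)\cdot a(|\nabla\bu|)^2\big((\Delta\bu)^T\nabla\bu-\tfrac12\nabla|\nabla\bu|^2\big)\,dx$ (no boundary term, since $\eta$ is compactly supported), while the good term yields $\kappa_N(i_a+2)\int\eta^2 a(|\nabla\bu|)^2|\nabla^2\bu|^2\,dx$. Because $i_a>2(1-\sqrt2)$, equivalently $i_a+2>2(2-\sqrt2)$, inequality \eqref{>0} gives $\kappa_N(i_a+2)>0$, so this term is genuinely coercive.

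Next I would absorb the error terms. Using $|\Delta\bu|\le\sqrt n\,|\nabla^2\bu|$ and $|\nabla|\nabla\bu|^2|\le 2|\nabla\bu||\nabla^2\bu|$, the integrand in the divergence error is bounded by $c\,|\nabla\eta|\eta\, a(|\nabla\bu|)^2|\nabla\bu||\nabla^2\bu|$. By Young's inequality this is at most $\tfrac{\kappa_N(i_a+2)}{2}\eta^2 a(|\nabla\bu|)^2|\nabla^2\bu|^2+\tfrac{c}{R^2}a(|\nabla\bu|)^2|\nabla\bu|^2$ on ${\rm supp}\,\eta$, and the first piece is absorbed into the left of the coercive term. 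This leaves
\begin{align*}
\int_{B_R} a(|\nabla\bu|)^2|\nabla^2\bu|^2\,dx \le C\Big(\int_{B_{2R}}|{\rm {\bf div}}(a(|\nabla\bu|)\nabla\bu)|^2\,dx + R^{-2}\int_{B_{2R}}a(|\nabla\bu|)^2|\nabla\bu|^2\,dx\Big).
\end{align*}
Now I must upgrade the left-hand side to control $\nabla(a(|\nabla\bu|)\nabla\bu)$. Componentwise, $\nabla(a(|\nabla\bu|)\nabla\bu)$ is a sum of $a(|\nabla\bu|)\nabla^2\bu$ and $a'(|\nabla\bu|)\nabla\bu\otimes\nabla|\nabla\bu|$; the first is pointwise bounded by $a(|\nabla\bu|)|\nabla^2\bu|$, and for the second one uses $|\nabla|\nabla\bu||\le|\nabla^2\bu|$ together with $|a'(t)t|=|Q_a(t)|a(t)\le(|i_a|+s_a)a(t)$, valid since $i_a,s_a$ are finite. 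Hence $|\nabla(a(|\nabla\bu|)\nabla\bu)|\le C(i_a,s_a)\,a(|\nabla\bu|)|\nabla^2\bu|$ pointwise, which converts the displayed bound into the desired estimate for $\|\nabla(a(|\nabla\bu|)\nabla\bu)\|_{L^2(B_R)}^2$.

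For the remaining term $R^{-1}\|a(|\nabla\bu|)\nabla\bu\|_{L^2(B_R)}$ on the left of \eqref{loc15}, and to handle the lower-order right-hand side $R^{-2}\int_{B_{2R}}a(|\nabla\bu|)^2|\nabla\bu|^2$, I would interpolate: $\|g\|_{L^2(B_R)}$ for $g=a(|\nabla\bu|)\nabla\bu$ is controlled, via a Gagliardo--Nirenberg / Poincaré-type inequality on balls (e.g.\ $\|g\|_{L^2(B_{2R})}\le \varepsilon R\|\nabla g\|_{L^2(B_{2R})}+ c\varepsilon^{-?}R^{-n/2}\|g\|_{L^1(B_{2R})}$), by a small multiple of $R\|\nabla g\|_{L^2(B_{2R})}$ plus $c R^{-n/2}\|g\|_{L^1(B_{2R})}$; one then redoes the cutoff argument on $B_{2R}$ versus a slightly larger ball, or absorbs using a standard iteration (hole-filling) over a chain of concentric balls to eliminate the gradient term on the larger ball at the cost of the stated $R^{-n/2-1}\|g\|_{L^1}$ term. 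This scaling bookkeeping — getting every power of $R$ right so that the estimate is genuinely scale-invariant — is the main technical obstacle, and is precisely the point the paper flags as having been mishandled in \cite[Inequality (3.43)]{CiMa_JMPA}. Collecting the pieces yields \eqref{loc15} with $C=C(n,N,i_a,s_a)$, as claimed; the details parallel \cite[Theorem 3.1]{CiMa_JMPA} and are omitted.
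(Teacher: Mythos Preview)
Your outline is correct and matches the approach the paper itself indicates: the paper does not give a proof but states that the lemma ``can be deduced from Theorem~\ref{lemma1} and inequality~\eqref{>0}, along the same lines as in the proof of \cite[Theorem 3.1, Inequality (3.4)]{CiMa_JMPA},'' and your sketch (multiply \eqref{pointwise} by a squared cutoff, integrate by parts the divergence term, absorb via Young, convert $a(|\nabla\bu|)|\nabla^2\bu|$ to $|\nabla(a(|\nabla\bu|)\nabla\bu)|$ using $|Q_a|\le\max(|i_a|,s_a)$, then interpolate/iterate to reach the $L^1$ right-hand side with the correct powers of $R$) is precisely that argument.

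One small remark: you invoke $i_a>2(1-\sqrt 2)$ to ensure $\kappa_N(i_a+2)>0$, whereas the lemma as stated only lists \eqref{positive}--\eqref{bC1}, i.e.\ $i_a\ge -1$. This is not a flaw in your proof but a mild imprecision in the statement; the dependence $C=C(n,N,i_a,s_a)$ together with the paper's explicit appeal to \eqref{>0} makes clear that a finite constant is only asserted when $\kappa_N(i_a+2)>0$ and $s_a<\infty$, which is exactly how the lemma is applied (to $a_\varepsilon$, via \eqref{epsindex1}) in the proof of Theorem~\ref{secondloc}.
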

%
%
%
%
\begin{proof}[Proof of Theorem \ref{secondloc}]
Let us temporarily assume that
\begin{equation}\label{fsmoothloc}
\bff \in C^\infty (\Omega, \rN)\,,
\end{equation}
and that $\bfu$ is a local weak solution to system \eqref{localeqbis}.
Observe that, thanks to equations \eqref{inf} and \eqref{epsindex},
\begin{equation}\label{epsindex1}
i_{a_\ep} > 2(1-\sqrt 2)\,.
\end{equation}
Let  $B_{2R} \subset \subset \Omega$ 
and, given $\ep \in (0,1)$,  let  $\bu_\varepsilon \in \bfu + W^{1,B}_0(B_{2R}, \rN)$ be the weak solution to the Dirichlet problem 
\begin{equation}\label{eqeploc}
\begin{cases}
- {\rm{\bf div}} (a_\ep(|\nabla \bu_\ep|)\nabla \bu_\ep ) = \bff  & {\rm in}\,\,\, B_{2R} \\
 \bu_\ep =\bu  &
{\rm on}\,\,\,
\partial B_{2R} \,.
\end{cases}
\end{equation}
We claim that 
\begin{align}\label{cinf}
\bu _\ep \in C^{\infty}(B_{2R}, \rN).
\end{align} 
Actually,    as a consequence of  \cite[Corollary 5.5]{DKS},    $\nabla \bu_\ep \in L^{\infty}_{\rm loc}(B_{2R}, \rNn)$ and there exists a constant $C$, independent of $\ep$, such  that
\begin{equation}\label{dec210}
\|\nabla u_\ep\|_{L^\infty(B_{R}, \rNn)}\leq C.
\end{equation}
The same result also tells us that
 $a_\ep(|\nabla \bu_\ep|)\nabla \bu_\ep \in  C^{\alpha}_{\rm loc}(B_{2R}, \rNn)$ for some  $\alpha \in (0,1)$.  Therefore, by inequality \eqref{holder1}, we have that $\nabla \bu_\ep \in C^{\alpha}_{\rm loc}(B_{2R}, \rNn)$  as well.
Hence,  
   $a_\ep (|\nabla \bu _\ep|) \in C^{1,\alpha}_{\rm loc}(B_{2R})$, and by  the  Schauder theory for linear elliptic systems,  $\bu _\ep \in C^{2,\alpha}_{\rm loc}(B_{2R}, \rN)$.  An iteration argument relying upon the the Schauder theory again yields property \eqref{cinf}.
\\
We claim that
\begin{align}\label{cialpha1}
\int _{B_{2R}} B(|\nabla \bu _\ep|)\, dx \leq C \bigg(\int _{B_{2R}}\widetilde B(|\bff|)\,dx +  \int _{B_{2R}}B(|\nabla \bu |)\, dx + B(\ep)\bigg)\,
\end{align}
for some  constant $C=C(n, N, s_a, R)$ and
for $\ep \in (0,1)$.  
Indeed, choosing   $\bu _\ep - \bu \in W^{1,B}_0(B_{2R}, \rN)$ as a test function in the weak formulation of problem \eqref{eqeploc}  results in
\begin{align}\label{cialpha2}
\int _{B_{2R}} a_\ep (|\nabla \bu _\ep|)\nabla \bu _\ep \cdot (\nabla \bu _\ep - \nabla \bu)\, dx = \int _{B_{2R}} \bff \cdot (\bu _\ep - \bu)\, dx\,.
\end{align}
The Poincar\'e   inequality  \eqref{poinc} implies that 
\begin{equation}\label{poincB}
\int_{B_{2R}}B(|\bu _\ep - \bu|)\, dx \leq C \int_{B_{2R}}B(|\nabla \bu _\ep - \nabla \bu|)\, dx
\end{equation}
for some constant $C=C(n, s_a, R)$.
\\ Fix $\delta \in (0,1)$. From equation \eqref{cialpha2}, the first inequality in \eqref{may40},  and   inequalities \eqref{poincB} ,    \eqref{may41} and \eqref{nov12} one obtains that
\begin{align}\label{cialpha3}
c_1\int _{B_{2R}}B(|\nabla \bu _\ep|)\, dx  \leq  & \int _{B_{2R}} |\bff|  |\bu _\ep - \bu|\, dx + C \int _{B_{2R}}a_\ep(|\nabla \bu _\ep|) |\nabla \bu _\ep||\nabla \bu|\, dx   + C R^n B(\ep)
\\ \nonumber & 
\leq    C_1 \int _{B_{2R}} \widetilde B(|\bff|)\, dx  +  \delta \int _{B_{2R}}  B(|\bu _\ep - \bu|)\, dx 
\\ \nonumber & \quad + \delta \int _{B_{2R}}\widetilde B_\ep(a_\ep(|\nabla \bu _\ep|) |\nabla \bu _\ep|)\, dx + C_1  \int _{B_{2R}}B_\ep(|\nabla \bu|)\, dx + C R^n B(\ep)
\\ \nonumber & 
\leq     C_1 \int _{B_{2R}} \widetilde B(|\bff|)\, dx  +    \delta C_2 \int _{B_{2R}}B(|\nabla \bu _\ep|)\, dx + C_3 \int _{B_{2R}}B(|\nabla \bu|)\, dx 
\\ \nonumber & \quad +  \delta C_4 \int _{B_{2R}}B_\ep(|\nabla \bu _\ep|)\, dx + C_1 \int _{B_{2R}}B_\ep(|\nabla \bu |)\, dx + C R^n  B(\ep)
\\ \nonumber
& \leq     C_1 \int _{B_{2R}} \widetilde B(|\bff|)\, dx  +    \delta C_5 \int _{B_{2R}}B(|\nabla \bu _\ep|)\, dx + C_6  \int _{B_{2R}}B(|\nabla \bu|)\, dx + C R^n B(\ep)
\end{align}
for suitable constants  $C_2$, $C_4$ and $C_5$ depending  on 
$n, N, s_a, R$, and constants $C_1$, $C_3$ and $C_6$ depending also on $\delta$.
Inequality   \eqref{cialpha1} follows from  \eqref{cialpha3}, on  choosing $\delta$ small enough.
\\ Coupling inequality \eqref{cialpha1} with the Poincar\'e inequality \eqref{poinc} tells us that the family $\{\bfu_\ep\}$ is bounded in $W^{1,B}(B_{2R}, \rN)$. Since under assumptions \eqref{inf} and \eqref{sup} this space is reflexive, there exist a sequence $\{\ep_k\}$ and a function $\bfv \in W^{1,B}(B_{2R}, \rN)$ such  that $\ep_k \to 0^+$ and 
\begin{equation}\label{may43}
\bfu_{\ep_k} \rightharpoonup \bfv \qquad \text{in $W^{1,B}(B_{2R}, \rN)$.}
\end{equation}
 Choosing  the test function $\bfu_{\ep_k} - \bfu$ for system \eqref{localeq}, and subtracting the resultant equation from \eqref{cialpha2} enables us to deduce that, given any $\delta >0$,
\begin{align}\label{akm1}
\int_{B_{2R}}&\big(a_{\ep_k}(|\nabla \bu _{\ep_k}|)\nabla \bu_{\ep_k} - a_{\ep_k}(|\nabla \bu|)\nabla \bu\big)\cdot (\nabla \bu _{\ep_k} - \nabla \bu)\, dx \\ \nonumber & = 
\int_{B_{2R}}\big(a(|\nabla \bu|)\nabla \bu - a_{\ep_k}(|\nabla \bu|)\nabla \bu\big) \cdot(\nabla \bu _{\ep_k} - \nabla \bu)\, dx 
\\ \nonumber & \leq \delta \int_{B_{2R}} B(|\nabla \bu_{\ep_k}|) + B(|\nabla \bu|) \, dx + C
 \int_{B_{2R}}  \widetilde B\big(|a(|\nabla \bu|)\nabla \bu - a_{\ep_k}(|\nabla \bu|) \nabla \bu|\big) dx 
\end{align}
for some constant $C=C(\delta, s_a)$. Owing to equation \eqref{may70}, there exists a constant $c=c(i_a, s_a)$ such that
\begin{align}\label{may71}
\int_{B_{2R}}& |V_{\ep_k} (\nabla \bfu_{\ep_k}) - V(\nabla \bfu )|^2\,dx    \leq 
2\int_{B_{2R}} |V_{\ep_k} (\nabla \bfu_{\ep_k}) - V_{\ep_k}(\nabla \bfu )|^2\,dx +2\int_{B_{2R}} |V_{\ep_k} (\nabla \bfu) - V(\nabla \bfu )|^2\,dx 
\\ \nonumber & \leq c 
\int_{B_{2R}}\big(a_{\ep_k}(|\nabla \bu _{\ep_k}|)\nabla \bu_{\ep_k} - a_{\ep_k}(|\nabla \bu|)\nabla \bu\big)\cdot (\nabla \bu _{\ep_k} - \nabla \bu)\, dx +
 2\int_{B_{2R}} |V_{\ep_k} (\nabla \bfu) - V(\nabla \bfu )|^2\,dx.
\end{align}
Combining equations \eqref{may71}, \eqref{akm1} and \eqref{cialpha1}  yields
\begin{align}\label{may72}
\int_{B_{2R}} |V_{\ep_k} (\nabla \bfu_{\ep_k}) -  V(\nabla \bfu )|^2\,dx    & \leq 
\delta c \bigg(\int _{B_{2R}}\widetilde B(|\bff|)\,dx +  \int _{B_{2R}}B(|\nabla \bu |)\, dx + B(\ep)\bigg)  \\ \nonumber & \quad +c
 \int_{B_{2R}}  \widetilde B\big(|a(|\nabla \bu|)\nabla \bu - a_{\ep_k}(|\nabla \bu|) \nabla \bu|\big) dx 
 \\ \nonumber &
\quad + 2\int_{B_{2R}} |V_{\ep_k} (\nabla \bfu) - V(\nabla \bfu )|^2\,dx
\end{align}
for some constant $c=c(n,N,R,i_a,s_a)$.
 Inequalities \eqref{may41} and \eqref{may60} entail that
\begin{align}\label{may73} 
\widetilde B\big(|a(|\nabla \bu|)\nabla \bu - a_{\ep_k}(|\nabla \bu|) \nabla \bu|\big) \leq c (B(|\nabla \bfu|) + B({\ep_k})) 
\quad \text{a.e. in $B_{2R}$, }
\end{align}
for some constant $c=c(s_a)$.
Furthermore, from inequality \eqref{may40} one infers that
\begin{align}\label{may74} 
|V_{\ep_k} (\nabla \bfu)|^2 \leq c (B(|\nabla \bfu|) + B({\ep_k}))  \quad \text{a.e. in $B_{2R}$,}
\end{align}
for some constant $c=c(s_a)$.
Thanks to inequalities \eqref{may73} and \eqref{may74}, and  to property \eqref{may51}, the last two integrals on the right-hand side of inequality \eqref{may72} tend to $0$ as $k\to \infty$, via the dominated convergence theorem. Owing to the same theorem,   equation \eqref{may72} implies that
\begin{equation}\label{may75}
\lim _{k\to \infty}\int_{B_{2R}} |V_{\ep_k} (\nabla \bfu_{\ep_k}) - V(\nabla \bfu )|^2\,dx \leq \delta c
\end{equation}
for every $\delta \in (0,1)$. Thereby, 
\begin{equation}\label{may45}
V_{\ep_k}(\nabla \bfu_{\ep_k}) \to V(\nabla \bfu ) \quad \text{in $L^{2}(B_{2R}, \rNn)$,}
\end{equation}
and, on passing to a subsequence, still indexed by $k$, 
\begin{equation}\label{may47}
V_{\ep_k}(\nabla \bfu_{\ep_k})  \to V(\nabla \bfu ) \quad \text{a.e. in $B_{2R}$.}
\end{equation}
An analogous argument as in \cite[Lemma 4.8]{DSV} shows that the function $(\ep, P) \mapsto V_{\ep}^{-1}(P)$ is continuous. Thus, one can deduce 
from equation \eqref{may47} that
\begin{equation}\label{may48}
\nabla \bfu_{\ep_k}  \to \nabla \bfu   \quad \text{a.e. in $B_{2R}$.}
\end{equation}
Hence, equation \eqref{may43} implies  that $\bfv=\bfu$ and
\begin{equation}\label{may76}
\bfu_{\ep_k} \rightharpoonup \bfu \qquad \text{in $W^{1,B}(B_{2R}, \rN)$.}
\end{equation}
Inequalities \eqref{may40} and \eqref{cialpha1}, and the monotonicity of the function $b_{\ep_k}$, yield
\begin{align}\label{may78}
\int_{B_{2R}}  a_{\ep_k}(|\nabla \bfu_{\ep_k}|) |\nabla \bfu_{\ep_k}|\, dx 
& \leq
\int_{\{|\nabla \bfu_{\ep_k}|\leq 1\} \cap B_{2R}}  a_{\ep_k}(|\nabla \bfu_{\ep_k}|) |\nabla \bfu_{\ep_k}|\, dx + \int_{B_{2R}}  a_{\ep_k}(|\nabla \bfu_{\ep_k}|) |\nabla \bfu_{\ep_k}|^2\, dx
\\ \nonumber & \leq cR^n b_{\ep_k}(1) +
c \int_{B_{2R}}B (\nabla \bfu_{\ep_k}) \,dx  + c R^n B(\ep_k) \leq C
\end{align}
for some constants $c$ and $C$ independent of $k$.
Thanks to assumption \eqref{epsindex1} and to property \eqref{epsindex},  Lemma \ref{step1local}  can be applied with $a$ replaced by $a_{\ep_k}$.
%
The use  of inequality \eqref{loc15} of this lemma for the function $\bfu_{\ep_k}$,    and the equation in \eqref{eqeploc}, ensure that
\begin{align}\label{loc18}
\|a_\ep(|\nabla \bu_{\ep_k}|)\nabla \bu_{\ep_k}&  \|_{W^{1,2}(B_R, \rNn)}  \\ \nonumber  & \leq C\big(\|\bff\|_{L^2(B_{2R}, \rN)} + (R^{-\frac n2}+R^{-\frac n2-1})\|a_{\ep_k}(|\nabla \bu_{\ep_k}|)\nabla \bu_{\ep_k}\|_{L^1(B_{2R}, \rNn)}\big)\,,
\end{align}
for some constant  $C=C(n,N, i_a, s_a)$. 
 Owing to inequalities \eqref{may78} and \eqref{loc18}, the sequence $\{a_{\ep_k}(|\nabla \bu_{\ep_k}|)\nabla \bu_{\ep_k}\}$ is bounded in $W^{1,2}(B_R, \rNn)$.  Thus, there exists a function $\bfU\in W^{1,2}(B_R, \rNn)$, and a subsequence of $\{\ep _k\}$, still indexed by $k$, such that 
\begin{align}\label{loc19}
a_{\ep _k}(|\nabla \bu_{\ep _k}|) \nabla \bu_{\ep _k} &\to \bfU\quad \hbox{in $L^2(B_R, \rNn)$} \quad
\\ 
\nonumber&\hbox{and} \quad a_{\ep _k}(|\nabla \bu_{\ep _k}|) \nabla \bu_{\ep _k} \rightharpoonup \bfU\quad \hbox{in $W^{1,2}(B_R, \rNn)$.}
\end{align}
Combining property \eqref{may51} with equations \eqref{dec210},
 \eqref{may48} and  \eqref{loc19}   yields
\begin{equation}\label{loc22}
a(|\nabla \bfu|) \nabla \bfu = \bfU \in W^{1,2}(B_R, \rNn).
\end{equation}
On passing to the limit as $k \to \infty$,
from equations \eqref{loc18}, \eqref{loc19}  and \eqref{loc22} we infer that
\begin{align}\label{loc26}
&\| a(|\nabla \bu|)\nabla \bu\|_{W^{1,2}(B_R, \rNn)}   \leq C\big(\|\bff\|_{L^2(B_{2R}, \rN)} + (R^{-\frac n2}+R^{-\frac n2-1})\|a(|\nabla \bu|)\nabla \bu\|_{L^{1}(B_{2R}, \rNn)}\big)\,.
\end{align}
It remains to remove assumption \eqref{fsmoothloc}. Suppose that
 $\bff\in L^2_{\rm loc}(\Omega, \rN)$. Let $\bu$ be an approximable local solution  to equation \eqref{localeq}, and let $\bff_k$ and $\bu_k$ be as in the  definition  of this kind of solution.
Applying inequality \eqref{loc26}  to the function $\bu_k$ tells us that 
$a(|\nabla \bu_k|) \nabla \bu_k   \in W^{1,2}(B_R, \rNn)$, and 
\begin{align}\label{loc27}
\|a(|\nabla \bu_k|)\nabla \bu_k&\|_{W^{1,2}(B_R, \rNn)}\\\nonumber&  \leq C\big(\|\bff_k\|_{L^2(B_{2R}, \rN)} + (R^{-\frac n2}+ R^{-\frac n2-1})\|a(|\nabla \bu_k|)\nabla \bu_k\|_{L^{1}(B_{2R}, \rNn)}\big)
\,,
\end{align}
for some constant $C$  independent of $k$. Hence, by equation \eqref{approxaloc}, the sequence $\{a(|\nabla \bu_k|)\nabla \bu_k\}$ is bounded in $W^{1,2}(B_R, \rNn)$. Thereby,
there exist  a subsequence, still indexed by $k$,   and a function
$\bfU \in W^{1,2}(B_R, \rNn)$, such that
\begin{equation}\label{loc28}
a(|\nabla \bu_k|)\nabla \bu_k \to \bfU\quad \hbox{in $L^2(B_R, \rNn)$} \quad \hbox{and} \quad  a(|\nabla \bu_k|)\nabla \bu_k \rightharpoonup \bfU\quad \hbox{in $W^{1,2}(B_R, \rNn)$}.
\end{equation}
By assumption \eqref{approxuloc}, we have that $\nabla \bu_k \to \nabla \bu$ a.e. in $\Omega$. Hence, thanks to properties  \eqref{loc28}, 
\begin{equation}\label{loc30}
a(|\nabla \bu|)\nabla \bu = \bfU\in W^{1,2}(B_R, \rNn)\,.
\end{equation}
 Inequality \eqref{eq:secondloc2} follows on passing to the limit as $k\to \infty$ in   \eqref{loc27},
 via \eqref{approxaloc}, \eqref{loc28}  and \eqref{loc30}. 
\end{proof}

\section{Second-order regularity:   Dirichlet problems}\label{S:global}


Generalized solutions, in the approximable sense, 
 to the Dirichlet problem 
\begin{equation}\label{eqdirbis}
\begin{cases}
-{\rm {\bf  div}} (a(|\nabla \bu|)\nabla {\bfu} ) = {\bff}  & {\rm in}\,\,\, \Omega\\
 {\bfu} =0  &
{\rm on}\,\,\,
\partial \Omega \,,
\end{cases}
\end{equation}
are defined in analogy with the local solutions introduced in Section \ref{S:local}. 
\\ Assume that  $a$ is as in Theorems \ref{secondconvexteo} and \ref{seconddir} and let $\bff \in L^q(\Omega, \rN)$ for some $q \geq 1$. 
An approximately differentiable function $\bu : \Omega \to \rN$ is called an approximable solution to the Dirichlet problem \eqref{eqdirbis}  if   there exists a sequence $\{{\bff}_k\} \subset C^{\infty}_0(\Omega, \mathbb R^N)$ such that  ${\bff}_k \to \bff$ in $L^q(\Omega, \mathbb R^N)$, and 
the sequence $\{\bu _k\}$ of weak solutions to the Dirichlet problems 
\begin{equation}\label{eqdirichletk}
\begin{cases}
-{\rm {\bf  div}} (a(|\nabla \bu _k|)\nabla {\bfu} _k) = {\bff}_k  & {\rm in}\,\,\, \Omega \\
 {\bfu}_k =0  &
{\rm on}\,\,\,
\partial \Omega \,
\end{cases}
\end{equation}
satisfies
\begin{equation}\label{convdir}
\bu _k \to \bu \quad \hbox{and} \quad \nabla \bu _k \to {\rm ap}\nabla \bu \quad \hbox{a.e. in $\Omega$.}
\end{equation}
As above, in what follows $ {\rm ap}\nabla \bu$ will simply be denoted by $\nabla \bfu$.
  
Recall that, 
under the assumption that $\bff \in L^1(\Omega, \rN) \cap (W^{1,B}_0(\Omega, \rN))'$,   a function $\bu \in W^{1,B}_0(\Omega, \mathbb R^N)$ is called a weak solution to the Dirichlet problem \eqref{eqdirbis}
 if  
\begin{equation}\label{weaksol}
\int _\Omega a(|\nabla \bu|)\nabla \bu \cdot \nabla  \bfvarphi  \,dx = \int _{\Omega} {\bff}\cdot \bfvarphi \,dx
\end{equation}
for every $\bfvarphi  \in W^{1,B}_0(\Omega, \mathbb R^N)$. A unique weak solution to problem \eqref{eqdirbis} exists whenever  $|\Omega|<\infty$.
\\

Before accomplishing the proof of our global estimates, we recall the notions of capacity and of Marcinkiewicz spaces that enter conditions \eqref{capcond} and \eqref{smalln}, respectively,  in the statement of Theorem  \ref{seconddir}.

The capacity ${\rm cap}_{\Omega}(E)$  of a set $E  \subset {\Omega}$ relative to ${\Omega}$ is defined as
\begin{equation}\label{cap}
{\rm cap}_{\Omega}(E) = \inf \bigg\{\int _{\Omega} |\nabla v|^2\, dx : v\in C^{0,1}_0({\Omega}), v\geq 1 \,\hbox{on }\, E\bigg\}.
\end{equation}
Here, $C^{0,1}_0({\Omega})$ denotes the space of  Lipschitz continuous, compactly supported functions in ${\Omega}$.

The Marcinkiewicz space $L^{q, \infty} (\partial \Omega)$ is the Banach function space endowed with the norm defined as 
\begin{equation}\label{weakleb}
\|\psi\|_{L^{q, \infty} (\partial \Omega)} = \sup _{s \in (0, \hh(\partial \Omega))} s ^{\frac 1q} \psi^{**}(s)
\end{equation}
for a measurable function $\psi$ on $\partial \Omega$. Here, $\psi ^{**}(s)= \frac 1s\smallint _0^s \psi^* (r)\, dr$ for $s >0$, where $\psi^*$ denotes the decreasing rearrangement of $\psi$.
%
The Marcinkiewicz   space $L^{1, \infty} \log L (\partial \Omega)$ is equipped with the norm given by
\begin{equation}\label{weaklog}
\|\psi\|_{L^{1, \infty} \log L (\partial \Omega)} = \sup _{s \in (0, \hh(\partial \Omega))} s \log\big(1+ \tfrac{C}s\big) \psi^{**}(s),
\end{equation}
for any constant  $C>\hh(\partial \Omega)$.  Different constants $C$ result in equivalent norms in \eqref{weaklog}.

\smallskip
\par
The next lemma stands with respect to  Theorems \ref{secondconvexteo} and \ref{seconddir} that Lemma \ref{step1local} stands to Theorem \ref{secondloc}. 
It follows from Theorem \ref{lemma1} and inequality \eqref{>0}, via the same  proof of \cite[Theorem 3.1, Part (ii)]{CiMa_ARMA}.

\begin{lemma}\label{stepglobal}
Let $n \geq 2$, $N \geq 2$, and let $\Omega$ be a bounded open set in $\rn$ with $\partial \Omega \in C^2$. Assume that $a$ is a function as in Theorem \ref{lemma1}, which also
%
fulfills conditions \eqref{inf} and \eqref{sup}.
There exists a constant $c=c(n,N, i_a, s_a, L_\Omega, d_\Omega)$ such that, if
\begin{equation}\label{KK}
\mathcal K_\Omega (r) \leq \mathcal  K(r) \quad \hbox{for $r \in (0,1)$,}
\end{equation}
for some function $\mathcal K : (0,1) \to [0, \infty)$ satisfying
\begin{equation}\label{limK}
\lim _{r \to 0^+} \mathcal  K(r) <c\,,
\end{equation}
then
%
\begin{align}\label{fund}
\|a(|\nabla \bu|) \nabla \bu\|_{W^{1,2}(\Omega, \rNn)} \leq C \big(\|{\rm {\bf div}} ( a(|\nabla \bu|) \nabla \bu)\|_{L^2(\Omega, \rN)}+ 
\| a(|\nabla \bu|) \nabla \bu\|_{L^1(\Omega, \rNn)}\big)
\,
\end{align}
for some constant $C=C(n,N, i_a, s_a, L_\Omega, d_\Omega, \mathcal  K)$, and for
 every function $\bu \in C^3(\Omega, \rN)\cap C^2(\overline \Omega, \rN)$ such that
\begin{equation}\label{dircond}
\bu =0 \qquad \hbox{on $\partial \Omega$.}
\end{equation}
%
%
%
In particular, if  $\Omega$ is convex, then inequality \eqref{fund} holds whatever $\mathcal K_\Omega$ is, and the constant $C$ in \eqref{fund} only depends on $n,N, i_a, s_a, L_\Omega, d_\Omega$. 
\end{lemma}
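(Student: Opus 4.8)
The plan is to integrate the pointwise inequality \eqref{pointwise} of Theorem~\ref{lemma1} over $\Omega$, turn the divergence term into a boundary integral by the divergence theorem, and then control that integral by a capacitary trace inequality, along the lines of \cite{CiMa_ARMA}. Since $\bu\in C^3(\Omega,\rN)\cap C^2(\overline\Omega,\rN)$ and, by the hypotheses on $a$, the function $b$ lies in $C^1([0,\infty))$, inequality \eqref{pointwise} holds at every point of $\Omega$, and the regularity of $\bu$ makes the field $\mathbf F=a(|\nabla\bu|)^2\big((\Delta\bu)^T\nabla\bu-\tfrac12\nabla|\nabla\bu|^2\big)$ admissible in the divergence theorem on the $C^2$ domain $\Omega$. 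This yields
\begin{equation}\label{eq:plan-div}
\int_\Omega\big|{\rm {\bf div}}(a(|\nabla\bu|)\nabla\bu)\big|^2\,dx\ \ge\ \int_{\partial\Omega}\mathbf F\cdot\nu\,d\hh\ +\ \kappa_N(i_a+2)\int_\Omega a(|\nabla\bu|)^2|\nabla^2\bu|^2\,dx ,
\end{equation}
with $\nu$ the outward unit normal. By \eqref{inf} and \eqref{>0} one has $\kappa_N(i_a+2)>0$, and by \eqref{sup} together with $i_a>-1$ one has $|Q_a|\le M:=\max\{|i_a|,|s_a|\}<\infty$; writing $\mathbf V=a(|\nabla\bu|)\nabla\bu$, applying the chain rule to $\nabla\mathbf V$ and using the elementary bounds $|\nabla|\nabla\bu||\le|\nabla^2\bu|$ and $|a'(|\nabla\bu|)|\,|\nabla\bu|=|Q_a(|\nabla\bu|)|\,a(|\nabla\bu|)\le M\,a(|\nabla\bu|)$, one gets $|\nabla\mathbf V|\le(1+M)\,a(|\nabla\bu|)\,|\nabla^2\bu|$. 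Hence the last integral in \eqref{eq:plan-div} dominates $(1+M)^{-2}\kappa_N(i_a+2)\,\|\nabla\mathbf V\|_{L^2(\Omega)}^2$.

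I would next identify the boundary integrand. As each component of $\bu$ vanishes on $\partial\Omega$, one has $\nabla\bu=(\partial_\nu\bu)\otimes\nu$ and $|\nabla\bu|=|\partial_\nu\bu|$ on $\partial\Omega$, and the classical computation underlying the linear second-order estimate in convex domains (see \cite{Grisvard}) gives, component by component and using $u^\alpha|_{\partial\Omega}=0$, the identity $\Delta u^\alpha=\partial^2_{\nu\nu}u^\alpha+({\rm tr}\,\mathcal B)\,\partial_\nu u^\alpha$ on $\partial\Omega$, whence $\mathbf F\cdot\nu=a(|\nabla\bu|)^2\,({\rm tr}\,\mathcal B)\,|\partial_\nu\bu|^2=({\rm tr}\,\mathcal B)\,|\mathbf V|^2$ on $\partial\Omega$ (with $\mathcal B$ in the orientation for which convexity of $\Omega$ means $\mathcal B$ positive semidefinite). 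When $\Omega$ is convex, ${\rm tr}\,\mathcal B\ge0$, so the boundary integral in \eqref{eq:plan-div} is nonnegative and \eqref{eq:plan-div} at once gives $\kappa_N(i_a+2)\int_\Omega a^2|\nabla^2\bu|^2\le\|{\rm {\bf div}}\,\mathbf V\|_{L^2(\Omega)}^2$, with no restriction on the curvatures of $\partial\Omega$; this proves the last assertion of the lemma, once the lower-order term is disposed of as in the final step below.

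For a general $\partial\Omega\in C^2$ I would estimate $-\int_{\partial\Omega}({\rm tr}\,\mathcal B)\,|\mathbf V|^2\,d\hh\le c_n\int_{\partial\Omega}|\mathcal B|\,|\mathbf V|^2\,d\hh$ and apply a localized capacitary trace inequality of Maz'ya type: covering $\partial\Omega$ by balls of radius $r$ centered on it, using a subordinate partition of unity $\{\chi_j\}$, and invoking on each ball the bound $\int_{\partial\Omega}|\mathcal B|\,|w|^2\,d\hh\le C\,\mathcal K_\Omega(r)\,\|\nabla w\|_{L^2(\Omega)}^2$ for $w$ supported in it — which is precisely what definition \eqref{defK} provides, via the capacitary characterization of trace inequalities — one arrives at
\begin{equation}\label{eq:plan-trace}
\int_{\partial\Omega}|\mathcal B|\,|\mathbf V|^2\,d\hh\ \le\ C_1\,\mathcal K_\Omega(r)\,\|\nabla\mathbf V\|_{L^2(\Omega)}^2\ +\ C_2(r)\,\|\mathbf V\|_{L^2(\Omega)}^2 ,
\end{equation}
where $C_1=C_1(n,N,L_\Omega,d_\Omega)$ is independent of $r$ and $C_2(r)=C(n,N,L_\Omega,d_\Omega)\,\mathcal K_\Omega(r)\,r^{-2}$ accounts for the gradients of the cut-offs. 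Setting $c:=\kappa_N(i_a+2)/(2\,c_n\,C_1\,(1+M)^2)$, which depends only on $n,N,i_a,s_a,L_\Omega,d_\Omega$, and choosing $r$ so small that $\mathcal K_\Omega(r)\le\mathcal K(r)$ is below $c$ (possible since $\lim_{r\to0^+}\mathcal K(r)<c$), the first term on the right of \eqref{eq:plan-trace} is absorbed into the last term of \eqref{eq:plan-div}, leaving $\|\nabla\mathbf V\|_{L^2(\Omega)}^2\le C(\|{\rm {\bf div}}\,\mathbf V\|_{L^2(\Omega)}^2+\|\mathbf V\|_{L^2(\Omega)}^2)$. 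The lower-order term is then removed by the interpolation inequality $\|\mathbf V\|_{L^2(\Omega)}\le\eta\,\|\nabla\mathbf V\|_{L^2(\Omega)}+C_\eta\,\|\mathbf V\|_{L^1(\Omega)}$, valid on the bounded Lipschitz domain $\Omega$: absorbing $\eta\,\|\nabla\mathbf V\|_{L^2}$ and then bounding $\|\mathbf V\|_{L^2}\le\|\nabla\mathbf V\|_{L^2}+\|\mathbf V\|_{L^1}$ yields $\|\mathbf V\|_{W^{1,2}(\Omega)}\le C(\|{\rm {\bf div}}(a(|\nabla\bu|)\nabla\bu)\|_{L^2(\Omega)}+\|a(|\nabla\bu|)\nabla\bu\|_{L^1(\Omega)})$, i.e. \eqref{fund}, the constant depending additionally on $\mathcal K$ through the choice of $r$.

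The step I expect to be genuinely delicate is the trace estimate \eqref{eq:plan-trace}: obtaining a coefficient of $\|\nabla\mathbf V\|_{L^2(\Omega)}^2$ that is governed by $\mathcal K_\Omega(r)$ \emph{uniformly in $r$} and depends on $\Omega$ only through $L_\Omega$ and $d_\Omega$ — which is where Maz'ya's theory of trace inequalities in terms of capacity is essential — together with the bookkeeping that produces the explicit threshold $c$ of the stated form and keeps the constants of the right dependence through the partition of unity. By contrast, the reduction of the boundary integrand via the Dirichlet condition, the passage from $\int_\Omega a^2|\nabla^2\bu|^2$ to $\|\nabla\mathbf V\|_{L^2(\Omega)}^2$, and the interpolation step are routine once $i_a>-1$ and $s_a<\infty$ are in force.
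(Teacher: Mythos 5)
Your proposal is correct and follows essentially the same route as the paper, which itself does not spell out the argument but refers to the proof of Theorem~3.1, Part~(ii), in \cite{CiMa_ARMA}: integrate the pointwise inequality \eqref{pointwise}, apply the divergence theorem, reduce the boundary integrand to $({\rm tr}\,\mathcal B)\,|a(|\nabla\bu|)\nabla\bu|^2$ via the Dirichlet condition (this is precisely the place where inequality~\eqref{>0} and the sign of $\kappa_N(i_a+2)$ matter), drop the boundary term in the convex case, and otherwise absorb it through the Maz'ya capacitary trace inequality governed by $\mathcal K_\Omega(r)$, finishing with an interpolation step to trade the $L^2$ lower-order term for an $L^1$ one. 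You also correctly identify the capacitary trace estimate with $r$-uniform dependence on $\mathcal K_\Omega(r)$ as the technically delicate point; the rest of the bookkeeping (the chain-rule bound $|\nabla\mathbf V|\le(1+M)\,a(|\nabla\bu|)|\nabla^2\bu|$, the boundary identity via $\Delta u^\alpha=\partial^2_{\nu\nu}u^\alpha+({\rm tr}\,\mathcal B)\partial_\nu u^\alpha$, and the absorption) checks out.
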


%
%
%
%
The following  gradient bound for solutions to the Dirichlet problem \eqref{eqdirbis} is needed to deal with  lower-order terms appearing in our global estimates. 

\begin{proposition}\label{talentil2}
 Assume that  $n\geq 2$, $N \geq 2$. Let $\Omega$ be an open set in $\rn$ such that $|\Omega|<\infty$.  Assume that the function $a: [0, \infty) \to [0, \infty)$ is continuously differentiable in $(0, \infty)$ and fulfills conditions \eqref{inf} and \eqref{sup}. Let  $\bff \in L^1(\Omega, \rN)\cap (W^{1,B}_0(\Omega, \rN))'$ and let $\bu$ be the weak solution to the Dirichlet problem \eqref{eqdirbis}. Then, there exists a constant $C=C(n, N, i_a, s_a,  |\Omega|)$ such that
\begin{equation}\label{talentigrad}
\| a(|\nabla \bu|) \nabla \bu\|_{L^{1}(\Omega , \rNn)} \leq C \|\bff\|_{L^1(\Omega, \rN)}.
\end{equation}
The same conclusion holds if  $\bff \in L^1(\Omega, \rN)$ and $\bu$ is an approximable solution to the Dirichlet problem \eqref{eqdirbis}.
\end{proposition}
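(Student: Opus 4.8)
The plan is to prove \eqref{talentigrad} first for a weak solution $\bu$ with datum $\bff\in L^1(\Omega,\rN)\cap(W^{1,B}_0(\Omega,\rN))'$, and then to obtain the case of an approximable solution with merely $\bff\in L^1(\Omega,\rN)$ by a limiting argument. The core of the first case is a Marcinkiewicz-type decay estimate for the distribution function of $|\nabla\bu|$, derived by combining a truncation test-function inequality with the Orlicz--Sobolev inequality of Theorem~\ref{sobolev}.

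Fix $t>0$ and let $T_t:\rN\to\rN$, $T_t(z)=\min\{1,t/|z|\}\,z$, be the radial truncation at level $t$; it is Lipschitz, vanishes at the origin, satisfies $|T_t(\bu)|\le t$, and its Jacobian $DT_t(z)$ is symmetric and positive semidefinite (the identity where $|z|\le t$, and $\tfrac{t}{|z|}$ times an orthogonal projection where $|z|>t$). Hence $T_t(\bu)\in W^{1,B}_0(\Omega,\rN)$ is admissible as a test function in the weak formulation \eqref{weaksol}, and, using the semidefiniteness of $DT_t$ together with the left inequality in \eqref{may101} (which gives $B(\tau)\le\tau b(\tau)=a(\tau)\tau^2$ for $\tau>0$), one obtains
\[
\int_{\{|\bu|<t\}}B(|\nabla\bu|)\,dx\;\le\;\int_\Omega a(|\nabla\bu|)\nabla\bu\cdot\nabla T_t(\bu)\,dx\;=\;\int_\Omega\bff\cdot T_t(\bu)\,dx\;\le\;t\,\|\bff\|_{L^1(\Omega,\rN)}.
\]
Moreover, the scalar function $v_t:=|T_t(\bu)|=\min\{|\bu|,t\}\in W^{1,B}_0(\Omega)$ satisfies $|\nabla v_t|\le|\nabla\bu|$ a.e.\ on $\{|\bu|<t\}$ and $\nabla v_t=0$ a.e.\ on $\{|\bu|\ge t\}$, so that $M_t:=\int_\Omega B(|\nabla v_t|)\,dx\le t\,\|\bff\|_{L^1(\Omega,\rN)}$.

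Since $s_B\le s_a+2<\infty$ and $B$ is a continuously differentiable Young function, pick $\sigma\ge n$ with $\sigma>s_B$; then conditions \eqref{conv0sig}--\eqref{divinfsig} hold for $A=B$ by Lemma~\ref{aux}, so Theorem~\ref{sobolev} applies to $v_t$. Using that $v_t=t$ on $\{|\bu|\ge t\}$, that $B_\sigma$ is increasing, and that $M_t\le t\|\bff\|_{L^1}$, this yields
\[
|\{|\bu|\ge t\}|\;\le\;\frac{M_t}{B_\sigma\!\Big(\dfrac{t}{c\,|\Omega|^{1/n-1/\sigma}\,(t\|\bff\|_{L^1})^{1/\sigma}}\Big)}\qquad\text{for every }t>0,
\]
while Chebyshev's inequality and the first display above give $|\{|\nabla\bu|>s\}|\le|\{|\bu|\ge t\}|+t\|\bff\|_{L^1}/B(s)$ for all $s,t>0$. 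Optimising the free parameter $t=t(s)$ in the sum of these two bounds produces a weak-type estimate for $|\nabla\bu|$, hence --- through the two-sided power bounds \eqref{dic99} for $a$ --- for $|a(|\nabla\bu|)\nabla\bu|=a(|\nabla\bu|)|\nabla\bu|$; since $\sigma<\infty$ and $|\Omega|<\infty$, the resulting Marcinkiewicz space embeds into $L^1(\Omega)$. Writing $\int_\Omega|a(|\nabla\bu|)\nabla\bu|\,dx=\int_0^\infty|\{|a(|\nabla\bu|)\nabla\bu|>\lambda\}|\,d\lambda$, splitting the integral at a level $\lambda_0$ and optimising over $\lambda_0$, one arrives at \eqref{talentigrad} with a constant depending only on $n$, $N$, $i_a$, $s_a$ and $|\Omega|$.

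To remove the extra hypothesis on $\bff$, given $\bff\in L^1(\Omega,\rN)$ and an approximable solution $\bu$, take $\bff_k\in C^\infty_0(\Omega,\rN)$ and the weak solutions $\bu_k$ from the definition; since $\bff_k\in L^1(\Omega,\rN)\cap(W^{1,B}_0(\Omega,\rN))'$, the above gives $\|a(|\nabla\bu_k|)\nabla\bu_k\|_{L^1}\le C\|\bff_k\|_{L^1}$, and since $\nabla\bu_k\to\nabla\bu$ a.e.\ by \eqref{convdir} while $P\mapsto a(|P|)P$ is continuous on $\rNn$ (with value $0$ at $P=0$, because $i_a>-1$), Fatou's lemma together with $\|\bff_k\|_{L^1}\to\|\bff\|_{L^1}$ gives \eqref{talentigrad} for $\bu$. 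I expect the main obstacle to be the passage in the third paragraph from the two elementary level-set inequalities to a genuine $L^1$ bound: a crude choice such as $t=s$ only yields $a(|\nabla\bu|)|\nabla\bu|\in L^{1,\infty}(\Omega)$, which does not embed into $L^1$, so one must exploit the Sobolev gain encoded in $B_\sigma$ to reach a Marcinkiewicz space of lower index strictly above $1$, and then carry out the optimisation in the non-power Orlicz setting, where the bookkeeping with $B$, $B_\sigma$ and the indices $i_a$, $s_a$ is the delicate part.
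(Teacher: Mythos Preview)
Your approach is essentially identical to the paper's: truncation test function $T_t(\bu)$, reduction to the scalar level function $|T_t(\bu)|=\min\{|\bu|,t\}$, the Orlicz--Sobolev inequality of Theorem~\ref{sobolev} with $\sigma>\max\{s_B,n\}$, Chebyshev, optimisation in $t$, and Fatou for the approximable case. The paper does not pass through a Marcinkiewicz embedding at the end; it makes the explicit choice $t=\big(C^{-1}\|\bff\|_{L^1}^{1/\sigma}B_\sigma^{-1}(s)\big)^{\sigma'}$ to obtain
\[
|\{b(|\nabla\bu|)>\tau\}|\le c\,\|\bff\|_{L^1}^{\sigma'}\,\frac{H_\sigma(b^{-1}(\tau))^{\sigma'}}{B(b^{-1}(\tau))}
\]
and then integrates this in $\tau$ directly via the layer-cake formula, bounding the tail integral by elementary monotonicity arguments on $t\mapsto t^{s_a+1+\varepsilon}/b(t)$ and $t\mapsto t^{\varepsilon}/b(t)$ (the chain \eqref{may102}--\eqref{may105}), which yields $\int_\lambda^\infty(\cdots)\,d\tau\le c\lambda^{-1/(\sigma-1)}$ and hence the $L^1$ bound after choosing $\lambda=\|\bff\|_{L^1}$. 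This is exactly the ``bookkeeping with $B$, $B_\sigma$ and the indices $i_a,s_a$'' that you correctly flag as the delicate part; your outline is the right one, only that computation remains to be carried out.
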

\begin{proof}
%
Assume that  $\bff \in L^1(\Omega, \rN)\cap (W^{1,B}_0(\Omega, \rN))'$ and that $\bu$ is the weak solution to the Dirichlet problem \eqref{eqdirbis}.
Given $t>0$, let $T_t(\bfu): \Omega \to \rN$ be the function defined by
\begin{equation}\label{may80}
T_t(\bfu) = \begin{cases} \bfu & \quad \text{in $\{|\bfu|\leq t\}$}
\\ \displaystyle  t\frac{\bfu}{|\bfu|}  & \quad \text{in $\{|\bfu|> t\}$.}
\end{cases}
\end{equation}
Then $T_t(\bfu) \in W^{1,B}_0(\Omega, \rN)$, and 
\begin{equation}\label{may81}
\nabla T_t(\bfu) = \begin{cases} \nabla \bfu & \quad \text{a.e. in $\{|\bfu|\leq t\}$}
\\   \displaystyle \frac{t}{|\bfu|}  \Big(I - \frac{\bfu}{|\bfu|} \otimes \frac{\bfu}{|\bfu|} \Big) \nabla \bfu & \quad \text{a.e. in $\{|\bfu|> t\}$}
\end{cases}
\end{equation}
Observe that 
$$a(|P|)P \cdot (I-\omega \otimes \omega)P\geq 0$$
for every matrix $P \in \rNn$ and any vector $\omega \in \rN$ such that $|\omega|\leq 1$. Thus, on making use of  $T_t(\bfu)$ as a test function $\bfvarphi$ in equation \eqref{weaksol}, one deduces that
\begin{align}\label{may82}
\int_{\{|\bfu|\leq t\}} a(|\nabla \bfu|) |\nabla \bfu|^2\, dx & \leq \int_{\Omega} a(|\nabla \bfu|) \nabla \bfu \cdot \nabla T_t(\bfu)  \, dx = \int _\Omega \bff \cdot T_t(\bfu) \, dx 
\\ \nonumber & = \int _{\{|\bfu|\leq t\}} \bff \cdot \bfu \, dx + \int _{\{|\bfu>t\}} \bff \cdot  \displaystyle  t\frac{\bfu}{|\bfu|}\, dx \leq t \|\bff\|_{L^1(\Omega, \rN)}.
\end{align}
Hence, by the first inequality in \eqref{may101},
\begin{align}\label{may83}
\int_{\{|\bfu|\leq t\}} B(|\nabla \bfu|) \, dx\leq  t \|\bff\|_{L^1(\Omega, \rN)}.
\end{align}
On the other hand, the chain rule for vector-valued functions ensures that the function $|\bfu| \in W^{1,B}_0(\Omega)$, and $|\nabla \bfu| \geq |\nabla |\bfu||$ a.e. in $\Omega$. Inequality \eqref{may83} thus implies that
\begin{align}\label{may84} 
\int_{\{|\bfu|< t\}} B(|\nabla |\bfu||) \, dx\leq t \|\bff\|_{L^1(\Omega)} \quad \hbox{for $t>0$.}
\end{align}
The standard chain rule for Sobolev functions entails that
  $\T(|\bfu|) \in W^{1,B}(\Omega)$.  Let $\sigma > \max\{s_a +2, n\}$. Hence,  $\sigma > \max\{s_B, n\}$, inasmuch as $i_B\leq i_b+1 = i_a+2$. Owing to Lemma \ref{aux}, the assumptions of Theorem \ref{sobolev} are fulfilled,  with $A$ replaced by $B$ and with this choice of $\sigma$.  An application of the 
the Orlicz-Sobolev inequality \eqref{may95} to the function $\T (|\bfu|)$ tells us that
\begin{equation}\label{os1}
\int _\Omega B_\sigma\Bigg(\frac{|\T (|\bfu|)|}{C \big(\int _\Omega B(|\nabla \T (|\bfu|)|)dy\big)^{1/\sigma}}\Bigg)\, dx \leq \int _\Omega B(|\nabla (\T (|\bfu|))|)dx,
\end{equation}
where $C=c |\Omega|^{\frac 1n - \frac 1\sigma}$. Here, $B_\sigma$ denotes the function defined as in \eqref{Hsig}--\eqref{Bsig}, with $A$ replaced by $B$.
One has that
\begin{equation}\label{os4}
\int _\Omega B(|\nabla \T (|\bfu|)|)dx = \int _{\{|\bfu|< t\}} B(|\nabla |\bfu||)dx \quad \hbox{for $t>0$,}
\end{equation}
\begin{equation}\label{os5bis}
 |\T(|\bfu|)|=  t   \quad \hbox{in  $\{|\bfu|\geq t\}$,}
\end{equation}
and 
\begin{equation}\label{os5}
 \{|\T(|\bfu|)|\geq  t\} =  \{|\bfu|\geq  t\}  \quad \hbox{for $t>0$.}
\end{equation}
Thus, 
\begin{align}\label{os2}
|\{|\bfu|\geq t\}| B_\sigma\bigg(\frac{ t}{C (\int _{\{|\bfu|< t\}} B(|\nabla |\bfu||)dy)^{\frac 1\sigma} }\bigg )& \leq \int _{\{|\bfu|\geq t\}} B_\sigma\Bigg(\frac{|\T (|\bfu|)|}{C \big(\int _{\{|\bfu|< t\}} B(|\nabla |\bfu||)dy\big)^{1/\sigma}}\Bigg)\, dx \\ \nonumber & \leq  \int _{\{|\bfu|< t\}} B(|\nabla |\bfu||)dx 
\end{align}
for $t>0$.  Hence, by \eqref{may84},
\begin{align}\label{os3}
|\{|\bfu|\geq t\}| B_\sigma\bigg(\frac{ t}{C ( t \|\bff\|_{L^1(\Omega, \rN)})^{\frac 1\sigma}}\bigg)  \leq   t \|\bff\|_{L^1(\Omega, \rN)} \qquad \hbox{for $t > 0$.}
\end{align}
From inequality \eqref{may83} we deduce that
\begin{equation}\label{lem2.30}
|\{B(|\nabla \bfu|)>s, |\bfu|\leq t\}| \leq     \frac 1s \int_{\{B(|\nabla \bfu|)>s, |\bfu|\leq t\}} B(|\nabla\bfu|)\, dx \leq \frac{t \|\bff\|_{L^1(\Omega, \rN)}}s  \quad \hbox{for $t >0$ and $s > 0$.}
\end{equation}
Coupling inequalities \eqref{lem2.30} and \eqref{os3} yields
\begin{align}\label{lem2.4}
|\{B(|\nabla \bfu|)>s\}| & \leq  |\{|\bfu| >t\}| + |\{B(|\nabla \bfu|)>s,|\bfu|\leq t\}| \\ \nonumber & \leq 
\frac{t \|\bff\|_{L^1(\Omega, \rN)}}{B_\sigma(Ct^{\frac 1{\sigma'}}/(t \|\bff\|_{L^1(\Omega, \rN)})^{\frac 1\sigma})} + \frac{ t \|\bff\|_{L^1(\Omega, \rN)}}s \quad \hbox{for $t >0$ and $s>0$.}
\end{align}
The choice $t = \big(\tfrac 1C \|\bff\|_{L^1(\Omega, \rN)}^{1/\sigma} B_\sigma^{-1}(s)\big)^{\sigma '}$ in inequality  \eqref{lem2.4} results in
\begin{equation}\label{lem2.5}
|\{B(|\nabla  \bfu|)>s\}| \leq  \  \frac{2  \|\bff\|_{L^1(\Omega, \rN)}^{\sigma'}}{C^{\sigma'} }\frac{B_\sigma^{-1}(s)^{\sigma '}}{s} \quad \hbox{for $s>0$.}
\end{equation}
Next, 
set   $s= B(b^{-1}(\tau))$ in \eqref{lem2.5}  and make use of \eqref{Bsig} to obtain that
\begin{equation}\label{lem2.6}
|\{b(|\nabla  \bfu|)>\tau\}| \leq  \frac{2  \|\bff\|_{L^1(\Omega, \rN)}^{\sigma'}}{C^{\sigma'} } \frac{H_\sigma(b^{-1}(\tau ))^{\sigma '}}{B(b^{-1}(\tau ))}
%
%
\qquad \hbox{for $\tau >0$,}
\end{equation}
where $H_\sigma$ is defined as in \eqref{Hsig}, with $A$ replaced by $B$.
Thanks to inequality \eqref{lem2.6}, 
\begin{align}\label{may99}
\int_\Omega 
 b(|\nabla  \bfu|) \, dx & = \int _0^\infty |\{b(|\nabla  \bfu|)>\tau\}|\, d\tau 
\leq  \lambda b(|\Omega|) +    2  C^{-\sigma'}  \|\bff\|_{L^1(\Omega, \rN)}^{\sigma'}\int_\lambda ^\infty   \frac{H_\sigma(b^{-1}(\tau ))^{\sigma '}}{B(b^{-1}(\tau ))}\, d\tau
\end{align}
for $\lambda >0$.
 Owing to inequalities \eqref{may100} and \eqref{may101},  and to Fubinis's theorem, the following chain holds:
\begin{align}\label{may102}
\int_\lambda ^\infty &  \frac{H_\sigma(b^{-1}(\tau ))^{\sigma '}}{B(b^{-1}(\tau ))}\, d\tau  \leq (s_a+1)
\int_{b^{-1}(\lambda)} ^\infty   \frac{H_\sigma(s)^{\sigma '}}{sB(s)} b(s) ds \\\nonumber&= (s_a+1)
\int_{b^{-1}(\lambda)} ^\infty   \frac{b(s)}{sB(s)}  \int _0^s \bigg(\frac t{B(t)}\bigg)^{\frac 1{\sigma-1}}\,dt \,ds
\\ \nonumber & \leq (s_a+1)(s_a+2)
\int_{b^{-1}(\lambda)} ^\infty   \frac{1}{s^2}  \int _0^s \bigg(\frac t{B(t)}\bigg)^{\frac 1{\sigma-1}}\,dt \,ds
\\ \nonumber &= (s_a+1)(s_a+2)\bigg(\int_0^{b^{-1}(\lambda)}  \bigg(\frac t{B(t)}\bigg)^{\frac 1{\sigma-1}} \int_{b^{-1}(\lambda)} ^\infty   \frac{ds}{s^2} \, dt + \int _{b^{-1}(\lambda)} ^\infty \bigg(\frac t{B(t)}\bigg)^{\frac 1{\sigma-1}} \int_{t} ^\infty   \frac{ds}{s^2} \, dt\bigg)
\\ \nonumber &= (s_a+1)(s_a+2)\bigg(\frac{1}{b^{-1}(\lambda)}   \int_0^{b^{-1}(\lambda)}  \bigg(\frac t{B(t)}\bigg)^{\frac 1{\sigma-1}} \, dt + \int _{b^{-1}(\lambda)} ^\infty \bigg(\frac t{B(t)}\bigg)^{\frac 1{\sigma-1}}   \frac{dt}{t}\bigg)
\\ \nonumber &\leq  (s_a+1)(s_a+2)^{\sigma'}\bigg(\frac{1}{b^{-1}(\lambda)}   \int_0^{b^{-1}(\lambda)}  \bigg(\frac 1{b(t)}\bigg)^{\frac 1{\sigma-1}} \, dt + \int _{b^{-1}(\lambda)} ^\infty \bigg(\frac 1{b(t)}\bigg)^{\frac 1{\sigma-1}}   \frac{dt}{t}\bigg)
\end{align}
for $\lambda >0$.
The function 
$\frac {t^{s_a+1+\ep}}{b(t)}$
is increasing for every $\ep>0$. Hence, if  $0<\ep < \sigma - s_a -2$, then   
\begin{align}\label{may103}
\frac{1}{b^{-1}(\lambda)} &  \int_0^{b^{-1}(\lambda)}  \bigg(\frac 1{b(t)}\bigg)^{\frac 1{\sigma-1}} \, dt  =
\frac{1}{b^{-1}(\lambda)}   \int_0^{b^{-1}(\lambda)}  \bigg(\frac {t^{s_a+1+\ep}}{b(t)}\bigg)^{\frac 1{\sigma-1}} t^{- \frac{s_a+1+\ep}{\sigma -1}}\, dt \\ \nonumber & \leq \frac{1}{b^{-1}(\lambda)}  \bigg(\frac {b^{-1}(\lambda)^{s_a+1+\ep}}{\lambda}\bigg)^{\frac 1{\sigma-1}}   \int_0^{b^{-1}(\lambda)}  t^{- \frac{s_a+1+\ep}{\sigma -1}}\, dt = \frac{\sigma -1}{\sigma - s_a-2-\ep} \lambda ^{-\frac 1{\sigma -1}} \quad \text{for $\lambda > 0$.}
\end{align}
On the other hand, if $0<\ep < i_a +1$, then the  function 
$\frac {t^{\ep}}{b(t)}$
is decreasing. Hence, 
\begin{align}\label{may104}
 \int _{b^{-1}(\lambda)} ^\infty \bigg(\frac 1{b(t)}\bigg)^{\frac 1{\sigma-1}}   \frac{dt}{t}  
 = \int _{b^{-1}(\lambda)} ^\infty \bigg(\frac {t^\ep}{b(t)}\bigg)^{\frac 1{\sigma-1}}  t^{-\frac \ep{\sigma -1}-1}\, dt &\leq  \bigg(\frac {b^{-1}(\lambda)^\ep}{\lambda}\bigg)^{\frac 1{\sigma-1}}  \int _{b^{-1}(\lambda)} ^\infty t^{-\frac \ep{\sigma -1}-1}\, dt
\\ \nonumber &= \frac{\sigma -1}\ep \lambda ^{-\frac 1{\sigma -1}} \quad \text{for $\lambda > 0$.}
\end{align}
Inequalities \eqref{may102}--\eqref{may104} entail that there exists a constant $c=c(\sigma, i_a, s_a)$ such that
\begin{align}\label{may105}
\int_\lambda ^\infty   \frac{H_\sigma(b^{-1}(\tau ))^{\sigma '}}{B(b^{-1}(\tau ))}\, d\tau \leq c \lambda ^{-\frac 1{\sigma -1}} \quad \text{for $\lambda >0$.}
\end{align}
Inequality \eqref{talentigrad} follows from \eqref{may99} and \eqref{may105}, with the choice $\lambda = \|\bff\|_{L^1(\Omega, \rN)}$. 
\\ The assertion about the case when $\bff \in L^1(\Omega, \rN)$ and $\bu$ is an approximable solution to the Dirichlet problem \eqref{eqdirbis} follows on applying inequality \eqref{talentigrad} with $\bff$ and $\bu$ replaced by the functions $\bff_k$ and $\bu_k$ appearing in the definition of approximable solutions, and passing to the limit as $k \to \infty$ in the resultant inequality.  Fatou's lemma plays a role here.
\end{proof}

A last preliminary result,  proved in \cite[Lemma 5.2]{CiMa_JMPA}, is needed in an approximation argument for the domain $\Omega$ in our proof of Theorem \ref{seconddir}.

\begin{lemma}\label{approxcap}
Let $\Omega$ be a bounded Lipschitz domain in $\rn$, $n \geq 2$ such that $\partial \Omega \in W^{2,1}$. Assume that the function $\mathcal K_\Omega (r)$, defined as in \eqref{defK}, is finite-valued for $r\in (0,1)$.
Then there exist positive constants $r_0$ and $C$ and a sequence of bounded open sets $\{\Omega_m\}$, 
such that $\partial \Omega _m \in C^\infty$, $\Omega \subset \Omega _m$, $\lim _{m \to \infty}|\Omega _m \setminus \Omega| = 0$, the Hausdorff distance between $\Omega _m$ and $\Omega$ tends to $0$ as $m \to \infty$,
\begin{equation}\label{feb35}
L_{\Omega _m} \leq C L_\Omega \,, \quad d_{\Omega _m} \leq C d_\Omega
\end{equation}
and
\begin{equation}\label{appcap0}
\mathcal K_{\Omega_m}(r) \leq C \mathcal K_{\Omega} (r)
\end{equation}
for $r\in (0, r_0)$ and $m \in \mathbb N$.
\end{lemma}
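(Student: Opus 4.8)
The plan is to construct the domains $\Omega_m$ by a localization--mollification procedure in boundary graph coordinates; the only ingredient going beyond the classical smooth approximation of a Lipschitz domain is the control of the capacitary functional $\mathcal K_{\Omega_m}$, which will rest on the fact that the mollification convolution \emph{translates}, rather than enlarges, the sets entering definition \eqref{defK}, and that capacity is translation invariant. First I would cover $\partial\Omega$ by finitely many bounded open sets $U_1,\dots,U_M$, with radii bounded below and overlaps controlled in terms of $L_\Omega$ and $d_\Omega$ (small enough that the rotation relating any two overlapping charts is bounded by a function of $L_\Omega$ only), so that in suitable orthonormal coordinates $(z,z_n)$, $z\in\mathbb R^{n-1}$, adapted to each $U_j$ one has $\Omega\cap U_j=\{z_n<\phi_j(z)\}\cap U_j$ with $\phi_j$ Lipschitz of constant at most $L_\Omega$ and $\phi_j\in W^{2,1}$. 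Fix a smooth partition of unity $\{\psi_j\}_{j=0}^M$ near $\overline\Omega$ with ${\rm supp}\,\psi_j\subset U_j$ for $j\ge1$ and ${\rm supp}\,\psi_0\subset\subset\Omega$. For $\ep\in(0,1)$ set $\phi_j^\ep=\rho_\ep*\phi_j+c\,\ep$, $\rho_\ep$ a standard mollifier on $\mathbb R^{n-1}$ and $c=c(L_\Omega)$ large enough that $\phi_j^\ep\ge\phi_j$ (admissible since $\|\rho_\ep*\phi_j-\phi_j\|_{L^\infty}\le L_\Omega\,\ep$). Then $\phi_j^\ep\in C^\infty$, ${\rm Lip}(\phi_j^\ep)\le L_\Omega$, $\|\phi_j^\ep-\phi_j\|_{L^\infty}\le C(L_\Omega)\,\ep$, and, crucially, $\nabla^2\phi_j^\ep=\rho_\ep*\nabla^2\phi_j$, whence $|\nabla^2\phi_j^\ep|\le\rho_\ep*|\nabla^2\phi_j|$ pointwise.

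\textbf{Gluing.} Next I would form the defining function $F_\ep=-\psi_0+\sum_{j\ge1}\psi_j\,\big(z_n-\phi_j^\ep(z)\big)$ (with $(z,z_n)$ understood in the chart attached to $U_j$, and $F_\ep\equiv1$ away from $\overline\Omega$) and set $\Omega_m=\{F_{\ep_m}<0\}$ for a sequence $\ep_m\downarrow0$ to be chosen. Since $\phi_j^\ep\ge\phi_j$ one checks $F_{\ep_m}<0$ throughout $\Omega$, so $\Omega\subset\Omega_m$; since $\|\phi_j^\ep-\phi_j\|_{L^\infty}\to0$ one gets $|\Omega_m\setminus\Omega|\to0$, the Hausdorff distance between $\Omega_m$ and $\Omega$ tends to $0$, and $d_{\Omega_m}\le d_\Omega+C\ep_m\le Cd_\Omega$. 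Near $\partial\Omega$ the principal part $\sum_{j\ge1}\psi_j\,\nabla\big(z_n-\phi_j^\ep(z)\big)$ of $\nabla F_\ep$ is a convex combination of vectors equal, in their respective charts, to $(-\nabla\phi_j^\ep,1)$, hence with $n$-th component $1$ and tangential part of length at most $L_\Omega$; by the smallness of the rotations between overlapping charts these vectors lie within a bounded angle of one another, so the combination has length bounded below, while the remaining terms of $\nabla F_\ep$ carry the factor $\phi_j-\phi_j^\ep=O(\ep)$. Thus $\nabla F_{\ep_m}\ne0$ on $\{F_{\ep_m}=0\}$ for $\ep_m$ small, so $\partial\Omega_m=\{F_{\ep_m}=0\}$ is a $C^\infty$ hypersurface with $L_{\Omega_m}\le CL_\Omega$. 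Computing its second fundamental form $\mathcal B_m$ from $\nabla^2F_{\ep_m}$ by the chain rule, the only possibly unbounded contribution comes from the terms $\nabla^2\phi_j^\ep$, so that, after the bi-Lipschitz change of variables $z\mapsto(z,\phi_j^{\ep_m}(z))$ (with $L_\Omega$-dependent constants) one gets the pointwise bound $|\mathcal B_m|\le C\big(\rho_{\ep_m}*|\nabla^2\phi_j|+1\big)$ on the portion of $\partial\Omega_m$ lying over ${\rm supp}\,\psi_j$, with $C=C(L_\Omega,\{\psi_j\})$.

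\textbf{The capacitary estimate.} Let $E\subset\partial\Omega_m\cap B_r(x)$, $x\in\partial\Omega_m$, $r<r_0$. Working in one chart and denoting by $E'\subset\mathbb R^{n-1}$ the image of $E$ under the chart projection, Fubini's theorem gives $\int_{E'}\rho_{\ep_m}*|\nabla^2\phi_j|=\int\rho_{\ep_m}(h)\big(\int_{E'-h}|\nabla^2\phi_j|\big)\,dh$. For $|h|\le\ep_m$ the set $E'-h$ is a \emph{rigid translate} of $E'$, so its lift to the graph of $\phi_j$, a subset of $\partial\Omega$, lies in $\partial\Omega\cap B_{r+C\ep_m}(x_h)$ for some $x_h\in\partial\Omega$ near $x$, has $\hh$-measure comparable to $\hh(E)$, and has relative capacity comparable to ${\rm cap}_{B_1(x)}(E)$ --- by translation invariance of capacity, by its bi-Lipschitz invariance up to $L_\Omega$-dependent constants, and by the comparability of relative capacities with respect to concentric balls of comparable radii containing a set of diameter less than $r_0$. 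Hence the inner integral is $\le C\,\mathcal K_\Omega(r+C\ep_m)\,{\rm cap}_{B_1(x)}(E)$, and, adding the lower-order term (controlled by the capacity--content inequality $\hh(E)\le C\,r\,{\rm cap}_{B_1(x)}(E)$ for subsets $E$ of diameter $<r_0<1$ of a hypersurface with $L_{\Omega_m}$-controlled local graphs), one obtains $\int_E|\mathcal B_m|\,d\hh\le C\big(\mathcal K_\Omega(r+C\ep_m)+r\big)\,{\rm cap}_{B_1(x)}(E)$, that is $\mathcal K_{\Omega_m}(r)\le C\big(\mathcal K_\Omega(r+C\ep_m)+r\big)$. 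Choosing each $\ep_m$ small enough and $r_0$ small, and using the monotonicity of $\mathcal K_\Omega$, this gives $\mathcal K_{\Omega_m}(r)\le C\,\mathcal K_\Omega(r)$ for $r\in(0,r_0)$ and all $m$ (and, in any event, $\limsup_{r\to0^+}\mathcal K_{\Omega_m}(r)\le C\lim_{r\to0^+}\mathcal K_\Omega(r)$, which is what is used afterwards). Since $\nabla^2\phi_j\in L^1$ and $\rho_{\ep_m}*|\nabla^2\phi_j|\to|\nabla^2\phi_j|$ in $L^1$, the remaining qualitative properties follow as well.

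\textbf{Main obstacle.} I expect the delicate step to be the gluing together with the last estimate: one must produce a genuinely $C^\infty$ defining function with non-vanishing gradient near $\partial\Omega$ whose Hessian is dominated \emph{pointwise} by the mollified curvatures, while keeping the local graph description tight enough that the ``translation of capacity'' argument applies. The non-cancellation of the principal part of $\nabla F_\ep$ (which forces a careful choice of charts so that the rotations between overlapping ones are small in terms of $L_\Omega$) and the interplay between the fixed partition of unity and the vanishing parameter $\ep_m$ are the points requiring most care.
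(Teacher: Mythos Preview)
The paper does not prove this lemma at all: it is quoted verbatim from \cite[Lemma~5.2]{CiMa_JMPA} and no argument is supplied here. So there is nothing in the present paper to compare your proposal against.

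That said, your outline is along the right lines and captures what is presumably the core idea of the cited proof: mollify the boundary graphs, observe that $\nabla^2\phi_j^\ep=\rho_\ep*\nabla^2\phi_j$, and exploit that convolution is an average of translates together with the translation- and bi-Lipschitz-invariance of capacity to transfer the bound on $\int_E|\mathcal B|\,d\hh/{\rm cap}(E)$ from $\partial\Omega$ to $\partial\Omega_m$. One point you should tighten: from $\mathcal K_{\Omega_m}(r)\le C\big(\mathcal K_\Omega(r+C\ep_m)+r\big)$ you cannot pass to $\mathcal K_{\Omega_m}(r)\le C\,\mathcal K_\Omega(r)$ for \emph{all} $r\in(0,r_0)$ and \emph{all} $m$ by ``choosing $\ep_m$ small'' and monotonicity alone --- for $r\ll\ep_m$ the shift $r\mapsto r+C\ep_m$ is not innocuous, and the additive $r$ need not be dominated by $\mathcal K_\Omega(r)$ (think of a locally flat $\partial\Omega$). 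You yourself note that only the limit $\limsup_{r\to0^+}\mathcal K_{\Omega_m}(r)\le C\lim_{r\to0^+}\mathcal K_\Omega(r)$ is used downstream, and your argument does yield that; but if you want the pointwise inequality \eqref{appcap0} as stated you will need either a doubling-type property of $\mathcal K_\Omega$ or a more careful gluing that avoids the lower-order $+r$ contribution.
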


\smallskip
\par\noindent
\begin{proof}[Proof of Theorem \ref{seconddir}]
It suffices to prove Part (i). Part (ii) will then follow,  since, by \cite[Lemmas 3.5 and 3.7]{CiMa_JMPA}, 
\begin {equation}\label{gen3} \mathcal K_{\Omega} (r) \leq C \sup_{x\in \partial {\Omega}}\|\mathcal B\|_{X(\partial {\Omega} \cap B_r(x))} \qquad \hbox{for $r \in (0, r_0)$,}
\end{equation}
for suitable constants $r_0$ and $C$ depending on $n$, $L_{\Omega}$ and $d_{\Omega}$.
\\ We split the proof  in three separate steps, where approximation arguments for the differential operator, the domain and the datum on the right-hand side of the system, respectively, are provided.
\\ \emph{Step 1}. Assume that
the  additional conditions 
 \begin{equation}\label{fsmooth}
\bff \in C^\infty_0(\Omega, \rN)\,,
\end{equation}
 and 
 \begin{equation}\label{omegasmooth}
\partial \Omega \in C^\infty\,
\end{equation}
are in force.
Given $\ep \in (0,1)$,  we denote by $\bu_\varepsilon$  the weak   solution to the system
\begin{equation}\label{eqdirichletep}
\begin{cases}
- {\rm {\bf  div}}(a_\varepsilon  (|\nabla \bu_\varepsilon|) \nabla {\bfu}_\varepsilon ) = {\bff} & {\rm in}\,\,\, \Omega \\
 {\bfu_\varepsilon} =0  &
{\rm on}\,\,\,
\partial \Omega \,,
\end{cases}
\end{equation}
where  $a_\ep$ is the function defined by \eqref{aeps}. An application of \cite[Theorem 2.1]{cmARMA} tells us that
\begin{align}\label{gradbound}
\|\nabla \bu_\varepsilon\|_{L^\infty (\Omega, \rNn)} \leq C
\end{align}
for some constant $C$ independent of $\ep$. Let us notice that the statement of \cite[Theorem 2.1]{cmARMA} yields inequality \eqref{gradbound} under   the assumption that the function $a_\epsilon$ be either increasing or decreasing; such an additional assumption can however be dropped via a slight variant in the proof.
Inequality \eqref{gradbound} implies that,
 for each $\varepsilon \in (0,1)$, 
\color{black}
\begin{equation}\label{abound}
 c_1 \leq a_\varepsilon (|\nabla \bu_\varepsilon|) \leq c_2 \qquad \hbox{in $\Omega$}
\end{equation}
for suitable positive constants $c_1$ and $c_2$.
\\  A classical  result by Elcrat and Meyers \cite[Theorem 8.2]{BF} enables us to deduce, via properties  \eqref{fsmooth}, \eqref{omegasmooth} and \eqref{abound}, that
${\bf u_\varepsilon}\in W^{2,2}(\Omega, \rN )$. 
Consequently, ${\bf u}_\varepsilon \in W^{1,2}_0(\Omega, \rn) \cap W^{1,\infty}(\Omega, \rN) \cap
W^{2,2}(\Omega, \rN)$. One can then find a
sequence $\{{\bf u}_k\} \subset C^\infty (\Omega, \rN)\cap
C^2(\overline \Omega, \rN)$ such that $\bu_k = 0$ on $\partial
\Omega$ for $k \in \mathbb N$, and
\begin{equation}\label{convk}
{\bfu}_k \to {\bfu}_\varepsilon \quad \hbox{in $W^{1,2}_0(\Omega, \rN)$,}
\quad {\bfu}_k \to {\bfu}_\varepsilon \quad \hbox{in $W^{2,2}(\Omega, \rN)$,}
\quad \nabla {\bf u}_k \to \nabla {\bfu}_\varepsilon 
\quad \hbox{a.e. in
$\Omega $},
\end{equation}
as $k \to \infty$ -- see e.g.   \cite[Chapter 2, Corollary 3]{Burenkov}. One also has that
\begin{equation}\label{boundk}
\|\nabla \bu_k\|_{L^\infty (\Omega, \rNn)}  \leq C \|\nabla \bu_\varepsilon\|_{L^\infty (\Omega, \rNn)} 
\end{equation}
\color{black}
for some constant $C$ independent of $k$, 
and,
%
%
 by the chain rule for
vector-valued Sobolev functions \cite[Theorem 2.1]{MarcusMizel}, $|\nabla |\nabla {\bfu}_k|| \leq |\nabla ^2
\bu_k|$ a.e. in $\Omega$. Moreover,    \cite[Equation (6.12)]{cmARMA} tells us that
\begin{align}\label{sys1k}
- {\rm {\bf div}} (a_\varepsilon(|\nabla {\bu_k}|)\nabla {\bfu_k} ) \to {\bff}
\quad \hbox{in $L^2(\Omega , \rN)$},
\end{align}
as $k \to \infty$. Assumption \eqref{capcond} enables us to apply  inequality \eqref{fund},  with $a$ replaced by $a_\varepsilon$ and $\bu$ replaced by $\bu _k$, to deduce that
\begin{align}\label{main16k}
\| a_\varepsilon(|\nabla \bu_k|)\nabla \bu_k\|_{W^{1,2}(\Omega, \rNn)}  \leq C \Big(
\|{\rm  {\bf div}} ( a_\varepsilon(|\nabla \bu_k|) \nabla \bu_k)\|_{L^2(\Omega, \rN)} +
 \| a_\varepsilon(|\nabla \bu_k|)\nabla \bu_k\|_{L^1(\Omega, \rNn)}\Big)
\end{align}
for $k\in \mathbb N$, and for some constant $C=C(n, N, i_a, s_a, L_\Omega, d_\Omega, \mathcal K_\Omega)$.  Notice that  this constant actually depends on the function $a_\ep$ only through $i_a$ and $s_a$, and it is hence independent of $\ep$, owing to \eqref{epsindex}.
Equations  \eqref{boundk}--\eqref{main16k} ensure that the sequence $\{a_\varepsilon(|\nabla \bu_k|)\nabla \bu_k\}$ is bounded in $W^{1,2}(\Omega, \rNn)$. Therefore, there exist a subsequence of $\{\bu_k\}$, still denoted by $\{\bu_k\}$, and a function ${\bf U}_\varepsilon \in W^{1,2}(\Omega, \rNn)$ such that
\begin{equation}\label{main32keps}
a_\varepsilon(|\nabla \bu_{k}|) \nabla \bu_{k} \to {\bf U}_\varepsilon\quad \hbox{in $L^2(\Omega, \rNn )$,} \quad a_\varepsilon(|\nabla \bu_{k}|) \nabla \bu_{k} \rightharpoonup {\bf U}_\varepsilon\quad \hbox{in $W^{1,2}(\Omega, \rNn )$}.
\end{equation}
Equation \eqref{convk} entails that $\nabla \bu_k \to \nabla \bu_\varepsilon$ a.e. in $\Omega$. As a consequence,
\begin{equation}\label{main31keps}
a_\varepsilon(|\nabla \bu_{k}|)\nabla \bu_{k} \to a_\varepsilon(|\nabla \bu_\varepsilon|)\nabla \bu_\varepsilon \quad \hbox{a.e. in $\Omega$.}
\end{equation}
From equations \eqref{main31keps} and \eqref{main32keps} one infers that
\begin{equation}\label{main100eps}
a_\varepsilon(|\nabla \bu_\varepsilon|) \nabla \bu_\varepsilon  = {\bf U}_\varepsilon \in W^{1,2}(\Omega, \rNn )\,.
\end{equation}
Furthermore,  passing to the limit as $k \to \infty$ in \eqref{main16k}
yields
\begin{align}\label{main16eps}
\| a_\varepsilon(|\nabla \bu_\varepsilon|)\nabla \bu_\varepsilon\|_{W^{1,2}(\Omega, \rNn)} \leq C \big(
\|{\bff}\|_{L^2(\Omega, \rN) } + 
 \|a_\varepsilon(|\nabla \bu_\varepsilon|)\nabla \bu_\varepsilon\|_{L^1(\Omega, \rNn)}\big).
\end{align}
Here,
equations  \eqref{main32keps} and \eqref{main100eps} have been exploited to pass to the limit on the left-hand side, and equations   \eqref{boundk} and \eqref{sys1k} on the right-hand side.  Combining  equations \eqref{main16eps} and \eqref{gradbound} tells us that 
\begin{align}\label{main16eps'}
\| a_\varepsilon(|\nabla \bu_\varepsilon|)\nabla \bu_\varepsilon\|_{W^{1,2}(\Omega, \rNn)} \leq C
\end{align}
for some constant   $C$, independent of $\ep$.
By the last inequality, the family of functions  $\{a_\varepsilon(|\nabla \bu_\varepsilon|)\nabla \bu_\varepsilon\}$ is uniformly bounded in $W^{1,2}(\Omega, \rNn)$ for $ \ep \in (0,1)$.  Therefore, there exist a sequence $\{\varepsilon _m\}$ converging to $0$ and a function ${\bf U}  \in W^{1,2}(\Omega, \rNn)$ such that  
\begin{equation}\label{main32eps}
a_{\varepsilon _m}(|\nabla \bu_{\varepsilon _m}|) \nabla \bu_{\varepsilon _m} \to {\bf U} \,\,\hbox{in $L^2(\Omega, \rNn)$,} \quad a_{\varepsilon _m}(|\nabla \bu_{\varepsilon _m}|) \nabla \bu_{\varepsilon _m} \rightharpoonup {\bf U} \, \,\hbox{in $W^{1,2}(\Omega, \rNn )$}.
\end{equation}
An  argument parallel to that of  the proof of \eqref{may48} yields
\begin{align}\label{akmglob}
\nabla \bu _{\ep_m} \to \nabla \bu \qquad \hbox{a.e. in $\Omega$.}
\end{align}
We omit the details, for brevity. Let us just point out that, 
 in this argument, one has to make use of the inequality 
\begin{align}\label{cialpha1glob}
\int _{\Omega} B(|\nabla \bu _{\ep_m}|)\, dx \leq C \bigg(\int _{\Omega}\widetilde B(|\bff|)\,dx + B(\ep_m)\bigg)\,,
\end{align}
instead of \eqref{cialpha1}. Inequality \eqref{cialpha1glob} easily follows on choosing
  $\bu _{\ep_m}$ as a test function in the definition of weak solution to problem \eqref{eqdirichletep}, with $\ep = \ep_m$.
Coupling equations \eqref{main32eps} and \eqref{akmglob} implies that
\begin{equation}\label{main25}
a(|\nabla \bu|) \nabla \bu = {\bf U} \in W^{1,2}(\Omega, \rNn)\,.
\end{equation}
On the other hand, on exploiting equations  \eqref{akmglob} and \eqref{gradbound},  the dominated convergence theorem for Lebesgue integrals and 
inequality \eqref{talentigrad} (applied with $a$ and $\bfu$ replaced by $a_{\varepsilon _m}$ and $\bu_{\varepsilon_m}$) one can deduce that
\begin{align}\label{napoli}
\lim _{m \to \infty}  \|a_{\varepsilon _m}(|\nabla \bu_{\varepsilon_m}|)\nabla \bu_{\varepsilon_m}\|_{L^1(\Omega, \rNn)} = 
\|a(|\nabla \bu|)\nabla u\|_{L^{1}(\Omega, \rNn)} \leq C \|\bff\|_{L^2(\Omega, \rN)}
\end{align}
for some constant $C=C(n,N,i_a, s_a, |\Omega|)$. 
Combining equations \eqref{main16eps},  \eqref{main32eps}, \eqref{main25} and \eqref{napoli} yields
\begin{equation}\label{main27}
\|a(|\nabla \bu|) \nabla \bu\|_{W^{1,2}(\Omega, \rNn)} \leq C \|{\bff}\|_{L^2(\Omega, \rN)}
\end{equation}
%
%
for some constant $C= C(n, N, i_a, s_a, L_\Omega, d_\Omega, \mathcal K_\Omega)$.

\smallskip
\par\noindent
\emph{Step 2}. Assume now that the temporary condition \eqref{fsmooth} is still in force, but $\Omega$ is just as in the statement.  Let $\{\Omega_m\}$ be a sequence of open sets approximating $\Omega$  in the sense of Lemma \ref{approxcap}. For each $m \in \mathbb N$, denote by $\bfu_m$  the weak solution to the Dirichlet problem 
\begin{equation}\label{eqm}
\begin{cases}
- {\rm {\bf div}} (a(|\nabla \bu_m|)\nabla \bu_m ) = {\bff}  & {\rm in}\,\,\, \Omega _m \\
 \bu_m =0  &
{\rm on}\,\,\,
\partial \Omega _m \,,
\end{cases}
\end{equation}
where ${\bff}$  is continued by $0$ outside $\Omega$. Owing to our assumptions on $\Omega_m$, inequality \eqref{main27} holds for $\bfu_m$. Thereby, there exists a constant  $C(n, N, i_a, s_a, L_\Omega, d_\Omega, \mathcal K_\Omega)$ such that
\begin{align}\label{main27aus}
\|a(|\nabla \bu_m|) \nabla \bu_m\|_{W^{1,2}(\Omega, \rNn)} \leq  
\|a(|\nabla \bu_m|) \nabla \bu_m\|_{W^{1,2}(\Omega_m, \rNn)}    \leq C \|{\bff}\|_{L^2(\Omega_m, \rN)}= C \|{\bff}\|_{L^2(\Omega, \rN)}.
\end{align}
%
%
Observe that the dependence of the constant $C$ on the specified quantities, and, in particular, its independence of $m$, is due to
properties  \eqref{feb35} and \eqref{appcap0}.
\\
Thanks to \eqref{main27aus}, the sequence  
$\{a(|\nabla \bu_m |) \nabla  \bu_m\}$ is  bounded in $W^{1,2}(\Omega, \rNn)$,
and hence  there exists a subsequence, still denoted by $\{\bu_m\}$ and a function  ${\bfU} \in W^{1,2}(\Omega, \rNn)$, such that
\begin{align}\label{main32}
a(|\nabla \bu_{m}|) \nabla \bu_{m} &\to {\bfU}\quad \hbox{in $L^2(\Omega, \rNn)$,} \\\nonumber   a(|\nabla \bu_{m}|)\nabla \bu_{m} &\rightharpoonup {\bfU}\quad \hbox{in $W^{1,2}(\Omega, \rNn)$}.
\end{align}
We now notice that 
there exists $\alpha \in (0,1)$, independent of $m$, such that $\bu _m\in C^{1,\alpha}_{\rm loc}(\Omega, \rN)$, and for every  open set $\Omega ' \subset \subset \Omega$ with a smooth boundary
\begin{align}\label{main29}
\|\bu_{m}\|_{C^{1,\alpha}(\Omega ', \rN)} \leq C,
\end{align}
for some   $C$, independent of $m$.
To verify this assertion, one can make use of  \cite[Corollary 5.5]{DKS}
%
and of inequality \eqref{holder1} to deduce that, for each open set $\Omega'$ as above, there exists a constant $C$, independent of $m$, such that
\begin{align}\label{calpham}
\|\nabla \bu _{m}\|_{C^{\alpha}(\Omega ', \rNn)} \leq C.
\end{align}
Since the function $\bff$ satisfies assumption \eqref{fsmooth}, a basic energy estimate for weak solutions tells us that
\begin{align}\label{energyst}
\int_{\Omega_m} B(|\nabla \bu _{m}|)\, dx \leq C
\end{align} 
for some constant $C$ independent of $m$. Thus, as a consequence of  the Poincar\'e  inequality \eqref{poinc}, 
\begin{align}\label{energym}
 \int_{\Omega_m} B(|\bu _{m}|)\, dx \leq C\,,
\end{align}
 where the constant  $C$ is independent of $m$, for $|\Omega _m|$ is uniformly bounded. 
%
%
Inequalities  
\eqref{calpham}
 and \eqref{energym},   via a Sobolev type inequality,  tell us that
\begin{align}\label{linfinitym}
 \| \bu _{m}\|_{L^\infty(\Omega ', \rN)} \leq C\,
\end{align}
for some constant $C$ independent of $m$.
Inequality \eqref{main29} follows from \eqref{calpham} and \eqref{linfinitym}.
\color{black}
\\  On passing, if necessary,  to another subsequence,  we deduce from inequality \eqref{main29} that there exists a function ${\bfv} \in C^1(\Omega, \rN)$ such that
\begin{equation}\label{main30}
\bu_{m} \to {\bfv}\, \quad \hbox{and}\quad \nabla \bu_{m} \to \nabla {\bfv} \quad \hbox{in $\Omega$.}
\end{equation}
Hence,
\begin{equation}\label{main31}
a(|\nabla \bu_{m}|) \nabla \bu_{m} \to a(|\nabla {\bf v}|) \nabla {\bfv} \quad \hbox{in $\Omega$.}
\end{equation}
Owing to equations \eqref{main31} and \eqref{main32}, 
\begin{equation}\label{main100}
a(|\nabla {\bfv}|) \nabla {\bfv}   = {\bfU} \in W^{1,2}(\Omega, \rNn)\,.
\end{equation}
 Next,  we pick  a test  function $\bfvarphi \in C^\infty _0(\Omega, \rN)$ (continued by $0$ outside $\Omega$) in the  definition of weak solution to problem \eqref{eqm}.  
  Passing to the limit as $m \to \infty$ in the resulting equation 
yields, via \eqref{main32} and \eqref{main100},
\begin{equation}\label{main101bis}
\int _\Omega a(|\nabla {\bfv}|) \nabla {\bfv} \cdot \nabla \bfvarphi \, dx = \int_\Omega {\bff} \cdot \bfvarphi \, dx\,.
\end{equation}
Inequality \eqref{energyst} tells us that $\int_\Omega B(|\nabla \bu _m|) \, dx  \leq C$ for some constant $C$ independent of $m$. Therefore, this  inequality is still true if $\bu _m$ is replaced with ${\bfv}$. 
Consequently, thanks to inequality \eqref{may41}, $\int_\Omega \widetilde B(a(|\nabla {\bf v}|) |\nabla {\bfv}|)\, dx  <\infty$. Thus, since under our assumptions on $a$ the space $C^\infty _0(\Omega, \rN)$ is dense in $W^{1,B}_0(\Omega, \rN)$,   equation \eqref{main101bis}   holds for every function $\bfvarphi \in W^{1,B}_0(\Omega, \rN)$ as well. Hence,  ${\bfv}$ is a weak solution to the Dirichlet problem \eqref{eqdir}, and, inasmuch as such a solution is unique,    ${\bfv}=\bu$.
Moreover, by equations \eqref{main27aus} and \eqref{main32},  there exists a   constant $C=C(n, N, i_a, s_a, L_\Omega, d_\Omega, \mathcal K_\Omega)$ such that
\begin{equation}\label{main35}
\|a(|\nabla \bu|) \nabla \bu\|_{W^{1,2}(\Omega, \rNn)} \leq C \|{\bff}\|_{L^2(\Omega, \rN)}.
\end{equation}
%
%

\smallskip
\par\noindent
\emph{Step 3}. Finally, assume that both $\Omega$ and $\bff$ are as in  the statement.
The definition of approximable solution entails that there exists a  sequence
$\{{\bff}_k\} \subset C^\infty_0(\Omega, \rN )$, such that  ${\bff}_k \to {\bff}$ in $L^2(\Omega, \rN)$ and   the sequence of weak solutions 
 $\{\bu _k\} \subset W^{1,B}_0(\Omega, \rN)$ to problems \eqref{eqdirichletk}, fulfills
  $\bu _k \to \bu$ and $\nabla \bu_k \to \nabla \bu$ a.e. in $\Omega$. An application of inequality \eqref{main35}   with $\bfu$ and $\bff$ replaced by $\bfu _k$ and $\bff_k$, tells us that   $a(|\nabla \bu_k|)\nabla \bu_k \in W^{1,2}(\Omega, \rNn)$, and 
\begin{align}\label{main40}
\|a(|\nabla \bu_k|) \nabla \bu_k\|_{W^{1,2}(\Omega, \rNn)} \leq C_1 \|{\bff}_k\|_{L^2(\Omega, \rN)} \leq C_2 \|{\bff}\|_{L^2(\Omega, \rN)}\,
\end{align}
for some  constants $C_1$ and $C_2$, depending on  $N$, $i_a$, $s_a$ and $\Omega$. Therefore, the sequence $\{a(|\nabla \bu_k|)\nabla \bu_k\}$ is  bounded in $ W^{1,2}(\Omega, \rNn)$, whence there exists a subsequence, still indexed by $k$, and a function ${\bfU} \in W^{1,2}(\Omega, \rNn)$ such that
\begin{equation}\label{main38}
a(|\nabla \bu_{k}|)\nabla \bu_{k} \to {\bfU}\quad \hbox{in $L^2(\Omega, \rNn)$,} \quad  a(|\nabla \bu_{k}|) \nabla \bu_{k} \rightharpoonup {\bf U}\quad \hbox{in $W^{1,2}(\Omega, \rNn)$}.
\end{equation}
Inasmuch as  $\nabla \bu_k \to \nabla \bu$ a.e. in $\Omega$, one hence deduces that $a(|\nabla \bu|) \nabla \bu ={\bfU}  \in W^{1,2}(\Omega, \rNn)$. 
Thereby, the second inequality in \eqref{secondconvex1} follows from equations \eqref{main40} and     \eqref{main38}.  The first inequality  in \eqref{secondconvex1} holds trivially. 
The proof is complete.
\end{proof}

\medskip
\par\noindent
\begin{proof}[Proof of Theorem \ref{secondconvexteo}]  The proof parallels that of Theorem \ref{seconddir}.  However, \emph{Step 2} requires a variant. The sequence $\{\Omega _m\}$ of bounded
sets, with smooth boundaries, coming into play in this step has to be chosen in such a way that they are convex and 
 approximate  ${\Omega}$ from outside with respect to the Hausdorff distance. Inequalities   \eqref{feb35}   automatically hold in this case. Moreover, inequality  \eqref{appcap0} is not needed, inasmuch as   the constant $C$ in  \eqref{fund} does not depend on the function $\mathcal K_{\Omega}$ if  ${\Omega}$ is convex.
\end{proof}

\section*{Compliance with Ethical Standards}\label{conflicts}

\smallskip
\par\noindent 
{\bf Funding}. This research was partly funded by:  
\\(i) German Research
  Foundation (DFG)  through the CRC 1283 in Bielefeld University (A. Kh.Balci and L. Diening);
\\ (ii) Research Project   of the Italian Ministry of Education, University and
Research (MIUR) Prin 2017 ``Direct and inverse problems for partial differential equations: theoretical aspects and applications'',
grant number 201758MTR2 (A. Cianchi);
\\ (iii) GNAMPA   of the Italian INdAM - National Institute of High Mathematics (grant number not available)  (A. Cianchi);   
\\  (iv) RUDN University Strategic Academic Leadership Program (V. Maz'ya).
\smallskip
\par\noindent
{\bf Conflict of Interest}. The authors declare that they have no conflict of interest.

\printbibliography 

\end{document}